\newtheorem{theorem}{Theorem}
\numberwithin{theorem}{section}
\newtheorem{proposition}[theorem]{Proposition}
\newtheorem{lemma}[theorem]{Lemma}
\newtheorem{corollary}[theorem]{Corollary}
\newtheorem{question}[theorem]{Question}
\theoremstyle{definition}\newtheorem{definition}[theorem]{Definition}
\theoremstyle{definition}\newtheorem{remark}[theorem]{Remark}
\theoremstyle{definition}\newtheorem{example}[theorem]{Example}
\theoremstyle{definition}\newtheorem*{notation*}{Notation}
\theoremstyle{definition}\newtheorem*{convention*}{Convention}
\numberwithin{equation}{section}
\newcommand{\N}{\mathbb{N}}
\newcommand{\A}{\mathcal{A}}
\newcommand{\h}{\mathcal{H}}
\newcommand{\K}{\mathcal{K}}
\newcommand{\p}{\mathcal{P}}
\newcommand{\T}{\mathbb{T}}
\newcommand{\hpi}{\mathcal{H}_{\pi}}
\newcommand{\B}{\mathcal{B}}
\newcommand{\s}{\mathcal{P}}
\newcommand{\bh}{\mathcal{B}(\mathcal{H})}
\newcommand{\bk}{\mathcal{B}(\mathcal{K})}
\newcommand{\sqd}{D^{1/2}}
\newcommand{\cst}{C^\ast}
\newcommand{\C}{\mathbb{C}}
\newcommand{\ox}{\mathcal{O}(X)}
\newcommand{\cx}{C(X)}
\newcommand{\cbx}{C_b(X)}
\newcommand{\px}{\mathcal{P}_{\h}(X)}
\newcommand{\uhx}{UCP_{\mathcal{H}}(C(X))}
\newcommand{\rpx}{\mathcal{R}\mathcal{P}_{\h}(X)}
\newcommand{\cpx}{\mathcal{C}\mathcal{P}_{\h}(X)}
\newcommand{\piox}{\pi\left(\mathcal{O}(X)\right)}
\DeclareMathOperator{\Span}{span}
\DeclareMathOperator{\cspan}{\overline{span}}
\DeclareMathOperator{\ran}{ran}
\DeclareMathOperator{\cran}{\overline{ran}}
\DeclareMathOperator{\Ext}{Ext}
\DeclareMathOperator{\co}{co}
\begin{document}

\title[POVMs and UCP maps]{ $C^*$-extreme points of positive operator valued measures and unital completely positive maps}

\author{Tathagata Banerjee}
\email{tathagatabanerjee85@gmail.com }

\author{B.V. Rajarama Bhat}
\email{bhat@isibang.ac.in}

\author{Manish Kumar}
\email{ manish\_rs@isibang.ac.in }

\address{Indian Statistical Institute, Stat-Math. Unit, R V College Post, Bengaluru 560059, India}
 \keywords{POVM, completely positive maps, $C^*$-convexity}
\subjclass[2020]{primary: 81P16 ;secondary: 46L57, 81R15.}

\begin{abstract}
We study the quantum  ($C^*$)  convexity structure of  normalized
positive operator valued measures (POVMs) on measurable spaces. In
particular, it is seen that unlike extreme points under classical
convexity, $C^*$-extreme points of normalized POVMs on countable
spaces (in particular for finite sets) are always spectral measures
(normalized projection valued measures). More generally it is shown
that atomic $C^*$-extreme points are spectral. A Krein-Milman type
theorem for POVMs has also been proved. As an application it is
shown that a map on any commutative unital $C^*$-algebra with
countable spectrum (in particular $\C^n$) is $C^*$-extreme in the
set of unital completely positive maps if and only if it is a unital
$*$-homomorphism.
\end{abstract}

\maketitle

\section{Introduction}

The classical notion of convexity  plays an important role in
analysis in understanding various mathematical structures. Often the
problem is to identify extreme points of a convex set. Once that is
done, subsequently one may try to show that all points of the set
are convex combinations of extreme points or their limits. There
have been several approaches to generalize the notion of convexity
to have a non-commutative (or quantum) variant, for example
CP-convexity \cite{Fujimoto}, matrix convexity \cite{Effros
Winkler}, nc-convexity \cite{Davidson Kennedy} and $C^*$-convexity
(\cite{Loebl Paulsen}, \cite{Farenick Morenz}).

One prominent and  useful idea is to replace  positive scalars in
the interval $[0,1]$ as coefficients for convexity by positive,
contractive and invertible elements in a $C^*$-algebra. This is the
notion of quantum or $C^*$-convexity. The study of $C^*$-convexity
and $C^*$-extreme points seems to have been started by Loebl and
Paulsen \cite{Loebl Paulsen} for  subsets of $C^*$-algebras and
subsequently  many researchers have explored it. Farenick and Morenz
\cite{Farenick Morenz} defined and initiated a study of
$C^*$-convexity and $C^*$-extreme points (see Definition
\ref{C*-convexity for UCP maps}) for unital completely positive
(UCP) maps on $C^*$-algebras taking values in the algebra $\bh $ of
all bounded operators on a Hilbert space $\h .$  They call these
maps as {\em generalized states} following an earlier convention, as
UCP maps taking values in $\bh $ with $\h $ one dimensional  are
just states. In particular they show that for $n\in {\mathbb N}$,
$C^*$-extreme UCP maps on the $C^*$-algebra $\mathbb {C}^n$, taking
values in matrices (that is, $\bh $ with finite dimensional $\h$)
are $*$-homomorphisms. Whether the same conclusion can be arrived at
when the space $\h $ is infinite dimensional and separable was left
open. We settle it here affirmatively in Theorem \ref{$C^*$-extreme
UCP maps are homomorphisms}.

It should be  mentioned here that there are several papers analyzing
$C^*$-convexity of UCP maps:  \cite{Farenick Morenz}, \cite{Farenick
Zhou},  \cite{Gregg}, \cite{Zhou} and \cite{Magajna}, to name a few.
In \cite{Farenick Zhou} and \cite{Zhou}, one can see some abstract
characterizations of $C^*$-extreme points of UCP maps. There is a
well-known relationship (see \cite{Hadwin}, \cite{Paulsen})  between
UCP maps on the $C^*$-algebra $C(X)$ of continuous functions on a
compact Hausdorff space $X$ and positive operator valued measures
(POVMs) on the Borel $\sigma $-algebra $\ox $ on $X.$ Many authors
while studying UCP maps on commutative $\cst$-algebras exploit this
relationship. We follow the same approach and for the purpose first
study POVMs.

Positive operator valued measures (POVMs) are called generalized
measurements in quantum mechanics and are basic mathematical tools
in quantum information theory. There is extensive literature on
POVMs and we do not attempt a survey. Some standard references are
\cite{Davies}, \cite{Holevo}, \cite{Schroeck},   \cite{Davies
Lewis} and \cite{Han Larson Liu}. The notions of $C^*$-convexity
and $C^*$-extreme points have natural extensions to POVMs (see
Definition \ref{definition of C*-convex sets} and \ref{definition of
C*-extreme points}). Here we study $C^*$-convexity of POVMs on a
measurable space $(X,\ox)$, where $\ox$ is a $\sigma$-algebra of
subsets of a set $X$.  The problem of identifying $C^*$-extreme
points of POVMs has been open for several decades even for finite
sets. The result from 1997 of Farenick and Morenz \cite{Farenick
Morenz}  translates to saying that $C^*$-extreme positive {\em
matrix} valued measures on a finite set $X$ are spectral measures (normalized
projection valued measures).  We
generalize the result of \cite{Farenick Morenz} considerably, as we
allow general POVMs on all countable
spaces and still all the $C^*$-extreme points are spectral (Theorem
\ref{atomic $C^*$-extreme points are PVM}). This is important
because it is in stark contrast with classical (linear) convexity.
Extreme points of POVMs under classical convexity are not
necessarily spectral measures and are hard to describe even
for finite sets, though abstract characterizations are available.
$C^*$-extreme points being spectral measures have physical
significance as they relate to classical measurements. Our result
reinforces the idea that $C^*$-convexity is perhaps the suitable
notion of convexity in the quantum setting.

Our main goal is to explore the $C^*$-convexity  structure and
identify the $C^*$-extreme points of  POVMs taking values for
arbitrary separable Hilbert spaces. We shall also present some
results on usual (classical) extreme points of POVMs for comparison.
We investigate POVMs  via decomposing them into a sum of atomic and
non-atomic POVMs.  Some of these results on POVMs could be folklore
in the literature, but we present them here for clarity of
presentation and for completeness.

This paper is organized as follows. We start with the definition of
POVMs on measurable spaces in Section \ref{Basic Properties of
POVMs} and state some known basic results such as  Naimark's dilation
theorem, Radon-Nikodym type theorem and so on. A brief description
of atomic and non-atomic POVMs is given. In Section \ref{main
results on C*-extreme points}, we present some of our main results
on $C^*$-extreme points. The most crucial technical step is in the
proof of Theorem \ref{if mu(A) commutes with everything, then it is
a projection}.  Heinosaari and  Pellonp\"{a}\"{a} \cite{Heinosaari
Pellonpaa} have shown that extreme points of POVMs with commutative
ranges are spectral. The same conclusion holds under $C^*$-convexity
(Theorem \ref{commutative C*-extreme points are PVM}) as well.  Most importantly  all atomic $\cst$-extreme points are
also seen to be spectral (Theorem \ref{atomic $C^*$-extreme points
are PVM}). This
also helps us in proving that $\cst$-extreme points are spectral for
finite dimensional Hilbert spaces, which we prove in full
generality.

In Section \ref{mutually singular povm}, a notion of disjointness
for spectral measures is introduced and we see that it is equivalent to  mutual
singularity. We study the behaviour of $\cst$-extreme points on
taking direct sums of mutually singular POVMs. In particular, we
show that every $\cst$-extreme point decomposes into a direct sum of
an atomic POVM and a non-atomic POVM, mutually singular to each
other.  Next in Section \ref{measure isomorphism}, we explore basic
properties like $\cst$-convexity, atomicity etc under a notion of
measure isomorphism of POVMs. In Section \ref{povm on topological
space}, we analyze  POVMs on topological spaces. In this case, we
consider the notion of regularity of POVMs and obtain some results
analogous to  classical measure theory. We also consider a topology
on the collection of all POVMs and prove a Krein-Milman type theorem
(Theorem \ref{Krein-Milman type theorem for POVM}). Lastly in
Section \ref{application to completely positive maps}, we describe a
well-known correspondence between regular POVMs on a compact
Hausdorff space $X$ and completely positive maps on  the space
$C(X)$ of all continuous functions on $X$. Using the results got
earlier for POVMs  and this correspondence, we obtain a number of
results for  UCP maps on $\cx$. In particular we show that
$C^*$-extreme maps on commutative unital  $C^*$-algebras with countable spectrum
are $*$-homomorphisms (Theorem \ref {$C^*$-extreme UCP maps are
homomorphisms}). Then making use of the theory of measure
isomorphism of POVMs, we show that separable commutative unital
$C^*$-algebras with uncountable spectrum always admit non
$*$-homomorphic UCP maps as $C^*$-extreme points (Theorem \ref{a non
homomorphic C*-extreme point for separable C*-algebra}). We also
show a Krein-Milman type theorem for the collection of all UCP maps
on $\cx$ equipped with bounded-weak topology. In the concluding
section we remark that the study of $C^*$-convexity can easily be
extended to the setting of locally compact Hausdorff spaces by
taking one point compactifications. We  end with a question on
identifying $C^*$-extreme points of unital completely positive maps
on the $C^*$-algebra $l^{\infty }.$

It may be remarked here that, although we have relegated a detailed
description of the relationship between POVMs and completely
positive maps to Section \ref{application to completely positive
maps}, occasionally even in earlier sections we would be making
references to some known results from the theory of completely
positive maps.

\begin{convention*}
 All Hilbert spaces on which POVMs and UCP maps act will be complex and separable as that is where our interest lies. However,
when we consider Naimark's dilation of POVMs or Stinespring representations of UCP maps
 we may end up with non-separable Hilbert spaces and this has to be kept in mind. We follow the convention of the inner product being linear in the second variable. Throughout   $\bh$
denotes  the algebra of all bounded operators on a complex separable
Hilbert space $\h.$ If $\h , {\mathcal K}$ are two Hilbert spaces,
$\mathcal{B}(\h ,{\mathcal K})$ denotes the space of all  bounded linear
operators from $\h $ to  ${\mathcal K}.$ For a subset $M$ of a
Hilbert space, $[M]$ denotes the closed subspace generated by $M$.
For any map $f$, $\ran(f)$ denotes its range. Usually $A,B,C$
etc. will denote measurable subsets of general measurable spaces. Terms like
$\mu,\nu$ etc will denote arbitrary POVMs while $\pi,\rho$ will be
used specifically for spectral measures. The Hilbert space on which
a spectral measure $\pi$ acts will usually be denoted (and taken
without mention) by $\hpi$. Terms like
$\phi,\psi$ etc will denote completely positive maps on a
$\cst$-algebra. By a positive measure, we mean a (not necessarily
finite) usual scalar valued measure taking value in $[0,\infty]$. For our convenience, we always assume that singleton sets are measurable.
\end{convention*}

\section{Basic Properties of POVMs}\label{Basic Properties of POVMs}

\subsection{Positive operator valued measures}
In this section, we recall the definition and some basics of
positive operator valued measures. This would also help us in fixing
the notation. See \cite{Davies}, \cite{Holevo}, \cite{Paulsen},
\cite{Schroeck} and \cite{Han Larson Liu} for general references.

Unless stated otherwise, $X$ is a non-empty set and $\mathcal{O}(X)$ denotes a $\sigma$-algebra of subsets of $X$.
 The pair $(X,\ox)$ is called a {\em measurable   space} and  the elements of $\ox$ are called {\em measurable   subsets}. We shall simply call $X$ a measurable space without mentioning the underlying $\sigma$-algebra $\ox$.
 To avoid some unnecessary complications in presentation, we assume
 that all singleton subsets of $X$ are measurable. When $X$ is a topological space, we shall assume $\ox$ to be the Borel $\sigma$-algebra on $X$.
  All topological spaces under consideration would be Hausdorff.

\begin{definition}
Let $X$ be a measurable space and let $\h$ be a Hilbert space.
 A \emph{positive operator valued measure(POVM)} on $X$
with values in  $\bh $ is a map
$\mu:\ox\to\bh$ satisfying the following:
\begin{itemize}
    \item  $\mu(A)\geq 0$ in $\bh$ for all $A\in\ox$ and
    \item  for every $h,k\in\h$, the map $\mu_{h,k}:\ox\to\C$ defined  by
\begin{equation}
    \mu_{h,k}(A)=\langle h,\mu(A)k\rangle~~\text{for all } A\in\ox,\label{eq:notation for mu_h,k}
\end{equation}
is a  complex measure.
\end{itemize}

Moreover, a POVM $\mu$ is called
\begin{enumerate}
    \item  \emph{normalized}  if $\mu(X)=I_\h$, the identity operator on $\h$.
    \item  \emph{projection valued measure (PVM)}
    if $\mu( A)$ is a projection for each $ A\in\ox$.
    \item {\em spectral measure} if $\mu$ is a PVM and is normalized.
\end{enumerate}
\end{definition}

It follows from the definition of POVM that, for any increasing (or
decreasing)  sequence $\{A_n\}$ of measurable  subsets converging to
$A$ i.e. $A_n\subseteq A_{n+1}$ and $\cup_nA_n=A$ (or $A_n\supseteq
A_{n+1}$ and  $\cap A_n=A$), $\mu(A_n)\to \mu(A)$ in weak operator
topology (WOT) in $\bh$. Since convergence of an increasing (or
decreasing) sequence of bounded operators is equivalent for both weak
operator topology and strong operator topology (SOT), it follows
that $\mu(A_n)\to\mu(A)$ in SOT. Also, since on bounded subsets of $\bh$, WOT
and $\sigma$-weak topology  agree, we infer that
$\mu(A_n)\to\mu(A)$ in $\sigma$-weak topology. Therefore, in the
countable additivity of POVM:
$$\mu \left(\bigcup_{n=1}^{\infty} B_n\right) =\sum_{n=1}^\infty\mu (B_n),~~B_n\in
\ox, B_n\cap B_m=\emptyset ~\mbox{for} ~ n\neq m,$$ the convergence
of the series holds in WOT, SOT and $\sigma $-weak topologies. So
for POVMs such sums can be considered in any of the three
topologies.

For any POVM $\mu $, by $\mu _{h,k}$ we would mean the complex measure
defined in \eqref{eq:notation for mu_h,k}. It is clear that a POVM
$\mu $ is determined by its associated family of complex measures $\{ \mu
_{h,k}: h,k\in \h \}.$

\begin{notation*}
 Let $POVM_\h(X)$ denote the
collection of all POVMs  on $\ox$ with values in $\bh $ and let
$\px$ denote the collection of all normalized elements in
$POVM_\h(X).$
\end{notation*}

We frequently make use of the following remarks in subsequent results without always explicitly
referring to them.
\begin{remark}
It is well-known that for a POVM $\mu$, that $\mu(A)$ is a
projection for all $A\in \ox$ (i.e. $\mu$ is a PVM) is equivalent to the fact that
$\mu(B\cap C)=\mu(B)\mu(C)$ for all $B,C\in\ox$ (see pg 34,
\cite{Schroeck}).
\end{remark}

\begin{remark}\label{a collection of disjoint sets with non zero measure must be countable}
Let $\mu:\ox\to\bh$ be a POVM and let $\{B_i\}_{i\in I}$ be a collection of mutually disjoint measurable subsets such that $\mu(B_i)\neq 0$ for each $i\in I$. Then by using separability of $\h$, one can  show that  $I$ is countable (Lemma 3.1, \cite{Dorofeev Graaf}) as follows: consider any strictly positive density operator $S$ on $\h$ such that the map $T\mapsto\tr(ST)$ ($\tr$ denotes trace) is a normal faithful state on $\bh$. Define the positive measure $\mu_S:\ox\to[0,\infty)$ by $\mu_S(A)=\tr(S\mu(A))$ for all $A\in\ox$. Note that $\mu_S(B_i)\neq0$ for all $i\in I$ and since $\sum_{i\in I}\mu_S(B_i)\leq\mu_S(\cup_{i\in I}B_i)<\infty$, we conclude that $I$ is countable.
\end{remark}

\subsection{Naimark's  dilation theorem}
The classical dilation theorem of Naimark \cite{Neumark} shows that
POVMs can be dilated to spectral measures:   Let $X$ be a measurable space
and $\mu:\ox\to\bh$ be a POVM. Then there exists a triple
$(\pi,V,\hpi)$ where $\hpi$ is a Hilbert space,
$\pi:\ox\to\mathcal{B}(\hpi)$ is a spectral measure
%with $\pi(X)=I_\K$
and $V\in\mathcal{B}(\h,\hpi)$ such that
\begin{equation} \label{Naimark Dilation theorem}
    \mu( A)=V^*\pi( A)V  ~~~\text{for all } A\in \ox
\end{equation}
and the minimality condition: $\hpi=[\pi(\ox)V\h]$ is satisfied.
Moreover such a dilation is unique up to unitary equivalence. The
triple $(\pi, V, \hpi )$ is called a \emph{Naimark dilation triple}
for $\mu$. Since $\pi$ is spectral, note  from (\ref{Naimark
Dilation theorem}) that $V$ is an isometry if and only if $\mu $ is a
normalized POVM.

Naimark's theorem  is text book material. The proof generally
uses the usual GNS construction method. Some possible references are
(Theorem II.11.F,  \cite{Schroeck}) and (Theorem 2.1.1,
\cite{Holevo}).  A proof using  Stinespring's theorem for
completely positive maps is also well-known (Theorem 4.6,
\cite{Paulsen}), but then POVMs under consideration will have  to be
assumed to be regular on the Borel $\sigma-$algebra of some  locally compact Hausdorff space. As an immediate application of
Naimark's dilation theorem we have the following result. Here and
elsewhere, $\mathcal{M}'$ denotes the commutant of a subset
$\mathcal{M}$ in $\bh$.

\begin{proposition}\label{prop:projection commutes with everything}
Let $\mu:\ox\to\mathcal{B}(\mathcal{H})$ be a normalized POVM and
$\mu(E)$  a projection for some $E\in\ox$. Then $\mu(E\cap
A)=\mu(E)\mu(A)=\mu(A)\mu(E)$ for every $A\in\ox$. In particular,
$\mu(E)\in\mu(\ox)'$ and hence $\ran(\mu(E))$, the range of $\mu(E)$ is a reducing
subspace for all $\mu(A)$, $A\in\ox$.
\end{proposition}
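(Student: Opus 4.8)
The plan is to use the Naimark dilation triple $(\pi, V, \hpi)$ of the normalized POVM $\mu$, where $V$ is an isometry since $\mu(X) = I_\h$. The key observation is that when $\mu(E) = V^*\pi(E)V$ is a projection, the isometry $V$ must intertwine the range projection $\pi(E)$ in a rigid way. Concretely, I would first show that $\pi(E)V = V\mu(E)$. To see this, compute $\|\pi(E)Vh - V\mu(E)h\|^2$ by expanding the inner products: using $V^*V = I$, $V^*\pi(E)V = \mu(E)$, $\pi(E)^2 = \pi(E)$, and $\mu(E)^2 = \mu(E)$ (the hypothesis), all cross terms collapse and the expression vanishes for every $h \in \h$. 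Hence $\pi(E)V = V\mu(E)$, and taking adjoints also $V^*\pi(E) = \mu(E)V^*$.

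Once this intertwining relation is in hand, the rest is a short computation. For any $A \in \ox$, since $\pi$ is a spectral measure we have $\pi(E\cap A) = \pi(E)\pi(A) = \pi(A)\pi(E)$. Therefore
\[
\mu(E\cap A) = V^*\pi(E\cap A)V = V^*\pi(E)\pi(A)V = \mu(E)V^*\pi(A)V = \mu(E)\mu(A),
\]
using $V^*\pi(E) = \mu(E)V^*$ in the third equality. Symmetrically, writing $\pi(E\cap A) = \pi(A)\pi(E)$ and using $\pi(E)V = V\mu(E)$ gives $\mu(E\cap A) = V^*\pi(A)\pi(E)V = V^*\pi(A)V\mu(E) = \mu(A)\mu(E)$. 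This establishes $\mu(E\cap A) = \mu(E)\mu(A) = \mu(A)\mu(E)$ for all $A \in \ox$.

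The "in particular" assertions follow immediately: since $\mu(E)$ commutes with $\mu(A)$ for every $A \in \ox$, we have $\mu(E) \in \mu(\ox)'$. A projection lying in the commutant of a self-adjoint family has range that is invariant under every member of the family and under their adjoints, hence $\ran(\mu(E))$ is a reducing subspace for every $\mu(A)$, $A \in \ox$.

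I expect the only real content to be the verification that $\pi(E)V = V\mu(E)$ — that is, recognizing that the projection hypothesis on $\mu(E)$ forces the range of $V$ to behave coherently with respect to $\pi(E)$. Everything afterward is formal manipulation with the dilation identity. An alternative route avoiding Naimark would be to note that $\mu(E) = \mu(E)^2 \le \mu(E)\mu(X) $ -type Cauchy–Schwarz arguments for POVMs (the operator inequality $\mu(A)\mu(X)\mu(A) \ge \mu(A)\mu(E)\mu(A)$ combined with $\mu(E)$ being a projection), but the dilation argument is cleaner and is already available to us from the preceding subsection, so I would present that one.
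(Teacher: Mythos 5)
Your proposal is correct and follows essentially the same route as the paper: both establish the intertwining relation $\pi(E)V=V\mu(E)$ from the projection hypothesis (the paper via the operator identity $[V\mu(A)-\pi(A)V]^*[V\mu(A)-\pi(A)V]=\mu(A)-\mu(A)^2$ specialized to $A=E$, you via the equivalent norm computation for $E$ directly), and then derive $\mu(E\cap A)=\mu(E)\mu(A)=\mu(A)\mu(E)$ by the same formal manipulation with the dilation identity.
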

\begin{proof}
Let $(\pi,V,\hpi)$ be the minimal Naimark dilation for $\mu$. As
noticed earlier, since $\mu $ is normalized and $\pi$ is spectral, it follows that $V$ is an isometry.
Now for any $A\in\ox$, as $\mu (A)= V^*\pi(A)V$ and $V^*V=I_\h$, we get
\begin{align*}
    [V\mu(A)-\pi(A)V]^*\cdot[V\mu(A)-\pi(A)V]
    &= [\mu(A)V^*-V^*\pi(A)]\cdot[V\mu(A)-\pi(A)V] \\
%    &= \mu(A)V^*V\mu(A)-\mu(A)V^*\pi(A)V-V^*\pi(A)V\mu(A)-V^*\pi(A)\pi(A)V\\
    & =\mu(A)^2-\mu(A)^2-\mu(A)^2+\mu(A)\\
    &    = \mu(A)^2-\mu(A).
\end{align*}
In particular, since $\mu(E)$ is a projection, we get
$V\mu(E)=\pi(E)V$. For any
$A\in\ox$, therefore
\[\mu(A)\mu(E)=V^*\pi(A)V\mu(E)=V^*\pi(A)\pi(E)V=V^*\pi(A\cap E)V=\mu(A\cap E).\]
Similarly or by taking adjoint of the last equation we get
$\mu(E)\mu(A)=\mu(E\cap A)$.
\end{proof}

\begin{definition}
A POVM $\mu$ is \emph{concentrated} on a measurable subset $E$ if
$\mu(A)=\mu(A\cap E)$ for all $A\in\ox$.
\end{definition}

 Note that a POVM $\mu$ being concentrated on a subset $E$ just means that $\mu (X\setminus E)=0$. This is not same
as saying that $E$ is the support of $\mu$. In fact when $X$ is a
topological space, the {\em support} of $\mu$ is defined as the smallest closed
subset $C$ such that $\mu(C)=\mu(X)$.

\begin{proposition}\label{zero sets of mu and pi are same}
Let $\mu:\ox\to\bh$ be a POVM with the minimal Naimark dilation
$(\pi,V,\hpi)$. Then for any $A\in \ox$, $\mu(A)=0$ if and only if
$\pi(A)=0$. In particular, $\mu $ is concentrated on $E\in \ox$ if
and only if $\pi $ is concentrated on $E.$
\end{proposition}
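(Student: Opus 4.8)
The plan is to exploit the factorization $\mu(A)=V^*\pi(A)V$ together with the fact that $\pi(A)$ is a self-adjoint projection, and then use the minimality $\hpi=[\pi(\ox)V\h]$ to propagate the vanishing of $\pi(A)$ from the dense subspace $\pi(\ox)V\h$ to all of $\hpi$.

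One direction is immediate: if $\pi(A)=0$ then $\mu(A)=V^*\pi(A)V=0$. For the converse, suppose $\mu(A)=0$. Since $\pi(A)$ is a projection, $\mu(A)=V^*\pi(A)V=V^*\pi(A)^*\pi(A)V=(\pi(A)V)^*(\pi(A)V)$, so for every $h\in\h$ one has $\|\pi(A)Vh\|^2=\langle h,\mu(A)h\rangle=0$, whence $\pi(A)Vh=0$ for all $h\in\h$. Next I would use that the range of the spectral measure $\pi$ is commutative: for every $B\in\ox$ and $h\in\h$, $\pi(A)\pi(B)Vh=\pi(A\cap B)Vh=\pi(B)\pi(A)Vh=0$. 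Thus $\pi(A)$ annihilates the set $\{\pi(B)Vh:B\in\ox,\,h\in\h\}$, and being bounded it annihilates the closed linear span of that set, which by minimality is all of $\hpi$. Hence $\pi(A)=0$.

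The ``in particular'' clause then follows by applying the equivalence just proved to the set $X\setminus E$: as remarked after the definition of concentration, $\mu$ is concentrated on $E$ exactly when $\mu(X\setminus E)=0$, and likewise $\pi$ is concentrated on $E$ exactly when $\pi(X\setminus E)=0$; these two conditions coincide by the first part.

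There is no genuine obstacle here; the only step requiring care is the passage from the dense subspace $\pi(\ox)V\h$ to the whole of $\hpi$, which is precisely where minimality of the Naimark dilation is used — without it the statement would fail, since one could enlarge $\hpi$ by an orthogonal reducing summand on which $\pi(A)$ acts nontrivially while $V^*\pi(A)V$ is unchanged.
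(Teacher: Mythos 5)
Your proof is correct and follows essentially the same route as the paper: both arguments deduce from $\mu(A)=0$ that $\pi(A)$ annihilates the total set $\{\pi(B)Vh: B\in\ox,\, h\in\h\}$ (the paper via the monotonicity estimate $\langle \mu(B\cap A)h,h\rangle\leq\langle\mu(A)h,h\rangle$, you via $\pi(A)Vh=0$ and multiplicativity of the spectral measure), and then invoke minimality to conclude $\pi(A)=0$. The treatment of the ``in particular'' clause is likewise the intended one.
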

\begin{proof}
Let $\mu(A)=0$. Then for any $B\in\ox$ and $h\in \h$, we get
 \begin{align*}
     \langle \pi( A)\pi(B)Vh, \pi(B)Vh\rangle=\langle V^* \pi(B\cap A)Vh,h\rangle=\langle\mu(B\cap A)h,h\rangle\leq\langle \mu( A)h,h\rangle=0.
 \end{align*}
 Since $\{\pi(B)Vh;h\in\h,B\in\ox\}$ is total in $\hpi$ by the minimality condition,  we
  conclude that $\pi( A)=0$. The converse is obvious. The second assertion follows from the first.
\end{proof}

\begin{remark}
As we have already mentioned in  Convention, all Hilbert spaces on which  POVMs  act are assumed to be separable. But note that the Hilbert space $\hpi$ in the minimal Naimark dilation $(\pi,V,\hpi)$ of a POVM need not always be separable. Nevertheless, notions like atoms and atomic/non-atomic POVMs (Definition \ref{definition of atomic and non-atomic POVM}), mutual singularity (Definition \ref{definition of mutual singularity}) of POVMs, regularity (Definition \ref{definition of regularity}) of a POVM etc. do not need the assumption of separability of the Hilbert space and hence will naturally be considered for the spectral measure $\pi$.
\end{remark}

\subsection{Radon-Nikodym type theorem}
In classical measure theory, the Radon-Nikodym derivative of a ($\sigma$-finite) positive
measure absolutely continuous with respect to another ($\sigma$-finite) positive measure is a well-established fact.
 There have been several attempts to generalize it to the case of absolutely continuous POVMs (which is defined in
 a similar way as usual positive measures), especially for finite dimensional Hilbert spaces, see for
 example \cite{Farenick Plosker Smith}, \cite{Mclaren Plosker Ram}. In this paper however we consider a
 different
  notion of comparison of POVMs.  We say $\nu $ is {\em dominated\/} by $\mu $ (denoted by $\nu\leq \mu$)
   if $\mu-\nu$ is a POVM.  Here also a   Radon-Nikodym type of theorem is known and is well
   studied. It is analogous to a Radon-Nikodym theorem for completely positive maps by  Arveson (Theorem 1.4.2, \cite{Arveson1}).
See \cite{Raginsky} for a more recent account of this result of
Arveson and its implications to quantum information theory.

For readers convenience we present an outline of the proof. Here the
operator $D$ can be
 thought of as the Radon-Nikodym derivative of $\nu $ with respect
 to $\mu .$

\begin{theorem}\label{thm:Radon-Nikodym type theorem}(Radon-Nikodym type theorem)
 Let $\mu:\ox\to\bh$ be a POVM with the minimal Naimark dilation $(\pi,V,\hpi)$. Then for a
 POVM $\nu:\ox\to\bh$, $\nu\leq\mu$ (i.e. $\mu-\nu$ is a POVM) if  and only if there exists a positive
 contraction $D\in\pi(\ox)'$ such that $\nu(A)=V^*D\pi(A)V$ for all $A\in\ox.$
\end{theorem}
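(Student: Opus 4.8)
The plan is to prove the two implications separately; the substantive one is the construction of $D$ from the domination $\nu\leq\mu$.

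For the \emph{if} direction, suppose $D\in\pi(\ox)'$ is a positive contraction and put $\nu(A)=V^*D\pi(A)V$. Since $D$ commutes with the projection $\pi(A)$ and both are positive, the product $D\pi(A)$ is positive, so $\nu(A)\geq 0$; and, $D$ commuting with $\pi$, one has $\nu_{h,k}=\pi_{Vh,DVk}$, which is a complex measure, so $\nu$ is a POVM. Applying the same observation to $I_{\hpi}-D$, which is again a positive contraction lying in $\pi(\ox)'$, shows that $A\mapsto V^*(I_{\hpi}-D)\pi(A)V=\mu(A)-\nu(A)$ is also a POVM, i.e.\ $\nu\leq\mu$.

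For the converse the plan is a GNS-type form argument on $\hpi$. Let $\mathcal{D}_0$ be the dense linear span of $\{\pi(A)Vh:A\in\ox,\ h\in\h\}$, and for formal finite combinations $\xi=\sum_i\pi(A_i)Vh_i$ and $\eta=\sum_j\pi(B_j)Vk_j$ define
\[
F(\xi,\eta)=\sum_{i,j}\langle\nu(A_i\cap B_j)h_i,k_j\rangle .
\]
First I would check that $F$ is a positive semidefinite sesquilinear form on formal combinations: taking a Naimark dilation $(\rho,W,\h_\rho)$ of $\nu$ and using $\nu(A_i\cap A_j)=(\rho(A_j)W)^*(\rho(A_i)W)$ gives $F(\xi,\xi)=\|\sum_i\rho(A_i)Wh_i\|^2\geq 0$. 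Running the identical computation for the POVM $\mu-\nu$ and for $\mu$ itself (whose minimal dilation is $(\pi,V,\hpi)$) yields $F(\xi,\xi)+G(\xi,\xi)=\|\sum_i\pi(A_i)Vh_i\|^2=\|\xi\|^2$, where $G$ is the analogous form built from $\mu-\nu$; since $G(\xi,\xi)\geq 0$, this gives the key sandwich estimate $0\leq F(\xi,\xi)\leq\|\xi\|^2$.

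The step I expect to take the most care is the passage from formal combinations to honest vectors of $\hpi$, and this is exactly the point where one needs $\mu-\nu$ (not merely $\nu$) to be a POVM. If a formal combination $\zeta$ represents the zero vector of $\hpi$, the sandwich estimate gives $F(\zeta,\zeta)\leq\|\zeta\|^2=0$, and Cauchy--Schwarz for the positive semidefinite form $F$ forces $F(\zeta,\eta)=0$ for every $\eta$; hence $F$ descends to a genuine sesquilinear form on $\mathcal{D}_0$ satisfying $0\leq F(\xi,\xi)\leq\|\xi\|^2$, which extends by continuity to all of $\hpi$ and is represented by a unique $D\in\mathcal{B}(\hpi)$ with $0\leq D\leq I_{\hpi}$, i.e.\ a positive contraction. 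To see $D\in\pi(\ox)'$ I would compute, for $B\in\ox$ and vectors of $\mathcal{D}_0$,
\begin{align*}
\langle D\pi(B)\pi(A)Vh,\pi(C)Vk\rangle &=\langle\nu(A\cap B\cap C)h,k\rangle\\
&=\langle D\pi(A)Vh,\pi(B)\pi(C)Vk\rangle=\langle\pi(B)D\pi(A)Vh,\pi(C)Vk\rangle ,
\end{align*}
using $\pi(B)^*=\pi(B)$ and multiplicativity of $\pi$ on intersections, and conclude $D\pi(B)=\pi(B)D$ by density. Finally, since $\pi(X)=I_{\hpi}$, for all $h,k\in\h$ one gets $\langle V^*D\pi(A)Vh,k\rangle=\langle D\pi(A)Vh,\pi(X)Vk\rangle=\langle\nu(A)h,k\rangle$, so $\nu(A)=V^*D\pi(A)V$, as required.
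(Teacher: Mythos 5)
Your proof is correct and is essentially the paper's argument in a different packaging: the form $F$ you build is exactly $\langle T\xi,T\eta\rangle$ for the intertwiner $T(\pi(A)Vh)=\rho(A)Wh$ into the Naimark dilation of $\nu$ used in the paper, so your operator $D$ is the paper's $D=T^*T$, and your sandwich estimate $0\le F(\xi,\xi)\le\|\xi\|^2$ is the same positivity statement $\sum_{i,j}\langle h_i,(\mu-\nu)(A_i\cap A_j)h_j\rangle\ge 0$ that the paper uses to make $T$ a well-defined contraction. You additionally spell out the verifications (the ``if'' direction, $D\in\pi(\ox)'$, and $\nu=V^*D\pi(\cdot)V$) that the paper calls immediate, and these are fine.
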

\begin{proof}
The proof of `if' part is obvious. For the converse, assume that $\mu-\nu$ is a POVM. Let $(\h_{\rho}, \rho, W)$ be the minimal Naimark dilation for $\nu$ and define an operator $T:\h_{\pi}\to\h_{\rho}$ as follows: first define $T$ on the subspace $\Span\{\pi(A)Vh;A\in\ox,h\in\h\}$ of $\hpi$ by
$  T\left(\pi(A)Vh\right)=\rho(A)Wh,$ for all $A\in\ox, h\in\h$
and extend it linearly. One can easily show that $T$ is a well-defined contraction by using the fact that $\sum_{i,j=1}^n\langle h_i,(\mu-\nu)(A_i\cap A_j)h_j\rangle\geq0$ for any $A_i\in\ox, h_i\in\h, 1\leq i\leq n$.
So it extends as a
contraction to its closure $\hpi$, which we still denote by $T$.
Set $D=T^*T$. Then $D$ is a positive contraction and it is immediate to verify that $D\in \pi(\ox)'$ and $\nu(A)=V^*D\pi(A)V$ for all $A\in\ox$.
\end{proof}

\subsection{Extreme POVMs}
The set $\px$, which is the collection of all  normalized POVMs on
$X$ with values in $\bh $ is  clearly a convex set. Extreme points
of this set are well studied, especially when $X$ is a finite set or
a compact Hausdorff space and $\h$ is a finite dimensional Hilbert
space (see \cite{KRP1}, \cite{Chiribella Ariano Schlingemann},
\cite{Farenick Plosker Smith} and \cite{Heinosaari Pellonpaa}). In this
paper, we are not focusing much on extreme points of $\px$.
Nevertheless, we provide some results for the sake of comparison
with $\cst$-extreme points. It is to be noted that even when $X$ is
finite with more than two points, the set of extreme points is
difficult to describe. This is true even when $\h$ is finite
dimensional. The following abstract characterization of extreme
points of $\px$ is again inspired by  Arveson's result (Theorem
1.4.6, \cite{Arveson1}) which characterizes the extreme points of
unital completely positive maps on a $C^*$-algebra.  This must have
been noted by several researchers for the case of POVMs  and so we
just outline the proof.

\begin{theorem}\label{thm:extrme point criterion for POVM}(Extreme point criterion)
Suppose that $\mu\in\px$  has  the minimal Naimark
dilation  $(\pi,V,\hpi)$.  Then a necessary and sufficient criterion
for  $\mu$ to be extreme in $\px$ is that the map $D\mapsto V^*DV$
from  $\pi(\ox)'$ to $\bh $ is injective.
\end{theorem}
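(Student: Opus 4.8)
The plan is to derive both implications from the Radon--Nikodym type Theorem~\ref{thm:Radon-Nikodym type theorem} together with the minimality of the Naimark dilation. Note first that since $\mu(X)=I_\h$ the operator $V$ is an isometry, so $V^*\pi(A)V=\mu(A)$ for all $A\in\ox$ and $V^*V=I_\h$; write $I=I_{\hpi}$.

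For the forward direction, assume $\mu$ is extreme and take $D_0\in\pi(\ox)'$ with $V^*D_0V=0$; the goal is to show $D_0=0$. Passing to real and imaginary parts (each still lies in the von Neumann algebra $\pi(\ox)'$ and is still annihilated by $D\mapsto V^*DV$) and rescaling, we may assume $D_0=D_0^*$ with $\|D_0\|\le 1$. Then $I\pm D_0\ge 0$ and commutes with every $\pi(A)$, so $\mu_\pm(A):=V^*(I\pm D_0)\pi(A)V$ defines a normalized POVM --- positivity because $(I\pm D_0)\pi(A)=(I\pm D_0)^{1/2}\pi(A)(I\pm D_0)^{1/2}\ge 0$, countable additivity from that of $\pi$, and normalization from $\mu_\pm(X)=V^*V\pm V^*D_0V=I_\h$ --- and $\mu=\tfrac12(\mu_++\mu_-)$. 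Extremality forces $\mu_+=\mu_-$, i.e. $V^*D_0\pi(A)V=0$ for all $A\in\ox$. The key step is to upgrade this to $D_0=0$: for $A,B\in\ox$ and $h,k\in\h$, using $D_0\in\pi(\ox)'$ and $\pi(B)\pi(A)=\pi(A\cap B)$,
\[
\langle D_0\pi(A)Vh,\pi(B)Vk\rangle=\langle D_0\pi(A\cap B)Vh,Vk\rangle=\langle V^*D_0\pi(A\cap B)Vh,k\rangle=0,
\]
and since $\{\pi(B)Vk:B\in\ox,\,k\in\h\}$ is total in $\hpi$, this gives $D_0\pi(A)Vh=0$ for all $A,h$; totality once more yields $D_0=0$. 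Hence $D\mapsto V^*DV$ is injective on $\pi(\ox)'$.

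For the converse, suppose the map is injective but $\mu=\tfrac12(\mu_1+\mu_2)$ with $\mu_1,\mu_2\in\px$ and $\mu_1\neq\mu_2$. Since $\mu-\tfrac12\mu_1=\tfrac12\mu_2$ is a POVM we have $\tfrac12\mu_1\le\mu$, so Theorem~\ref{thm:Radon-Nikodym type theorem} provides a positive contraction $D\in\pi(\ox)'$ with $\tfrac12\mu_1(\cdot)=V^*D\pi(\cdot)V$. Set $D_0:=D-\tfrac12 I\in\pi(\ox)'$. Then $V^*D_0V=\tfrac12\mu_1(X)-\tfrac12\mu(X)=0$, while $V^*D_0\pi(A)V=\tfrac14\bigl(\mu_1(A)-\mu_2(A)\bigr)\neq 0$ for some $A$, so $D_0\neq 0$, contradicting injectivity; therefore $\mu$ is extreme.

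The only genuinely technical point is the totality argument in the forward direction that turns $V^*D_0\pi(\cdot)V\equiv 0$ into $D_0=0$; everything else reduces to bookkeeping with the isometry $V$ and a single appeal to the Radon--Nikodym theorem.
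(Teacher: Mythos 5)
Your proof is correct and follows essentially the same route as the paper: the forward direction perturbs $\mu$ by $\mu_\pm=V^*(I\pm D)\pi(\cdot)V$ and uses extremality plus minimality to force $D=0$, and the converse invokes the Radon--Nikodym type theorem to produce the commuting contractions. The only differences are cosmetic --- you spell out the totality argument and the reduction to self-adjoint $D$ that the paper leaves implicit, and you phrase the converse by contradiction rather than directly concluding $2D_i=I_{\hpi}$.
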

\begin{proof}
First assume that $\mu$ is extreme in $\px$. Let $V^*DV=0$ for some
$D\in\piox'$. Without loss of generality, we can assume that
$-I_{\hpi}\leq D\leq I_{\hpi}$.  Write $\mu=(\mu^++\mu^-)/2$ where
$\mu^{\pm}(\cdot)=V^*(I_{\hpi}\pm D)\pi(\cdot) V$. Then as $\mu$ is extreme
in $\px$, we must have $\mu=\mu^{+}$. Hence $V^*D\pi(\cdot) V=0$,
which implies  $D=0$. For the converse, assume  the injectivity
of the map $D\mapsto V^*DV$,  and let $\mu=(\mu_1+\mu_2)/2$ for
$\mu_1,\mu_2\in\px$. By Radon-Nikodym type theorem, there are
positive contractions $D_i\in \piox'$, $i=1,2$ such that
$\frac{\mu_i (\cdot )}{2}=V^*D_i\pi(\cdot) V$. But then as $\mu _i$
is normalized, we have
 $V^*(2D_i-I_{\hpi})V=0$ and hence the hypothesis
implies $2D_i=I_{\hpi}$. Thus we get $\mu_i(\cdot)=V^*\pi(\cdot) V=\mu(\cdot)$
for $i=1,2,$ which proves that $\mu$ is extreme in $\px$.
\end{proof}

The following is an immediate corollary of this theorem. It can also
be seen directly, as projections are extremal in the set of positive
contractions.
\begin{corollary}
Every spectral measure is extreme in $\px$.
\end{corollary}

We briefly discuss here a result in Holevo \cite{Holevo} (see Theorem 2.1.2 therein),  which describes some significant differences that can arise when  dimension of the Hilbert space changes from finite to infinite.  Let $\mathcal{P}^0_\h(X)$ denote the set of all spectral measures, and let $\mathcal{P}^1_\h(X)$ denote the set of POVMs with commuting ranges. Note that $\mathcal{P}^0_\h(X)\subseteq \mathcal{P}^1_\h(X)$. Let $\Ext(\px)$ denote the set of extreme points of $\px$, and let $\co(S)$ denote the convex hull of a subset $S$ of $\px$. Holevo considers the following topology on $\px$ given by the convergence: a net $\mu_i$ converges to $\mu$ in $\px$ if $\tr(T\mu_i(A))\to \tr(T\mu(A))$, for all $A\in \ox$ and  trace class operators $T$ on $\h$ i.e. $\mu_i(A)\to \mu(A)$ in $\sigma$-weak topology (this is equivalent to saying that $\mu_i(A)\to \mu(A)$ in WOT for all $A\in\ox$). In Section \ref{povm on topological space}, we also consider a strictly weaker topology  that we define for POVMs on topological spaces  (see Definition \ref{definition of bw topology on povm}).

Let $n$ denote the cardinality of the set $X$. If $n=2$, then the extreme points of $\px$ are exactly the spectral measures (this case relates closely to the classical probability theory), as well as we have $\mathcal{P}^1_\h(X)=\overline{\co}(\mathcal{P}^0_\h(X))$. Note that this happens regardless of the dimension of the Hilbert space $\h$. On the other hand, when $n>2$ the situation becomes more complicated. To be precise, if $n>2$ then there are always some extreme points of $\px$ which are not spectral measures, and we have  $\mathcal{P}^1_\h(X)\subsetneq \overline{\co}(\mathcal{P}^0_\h(X))\subseteq\px$. Moreover, the latter inclusion is strict when $\dim\h<\infty$, while they are equal when $\dim\h=\infty$.

The scenario in the case of $\cst$-extreme points of $\px$ (see Definition \ref{definition of C*-extreme points}) is less complex. Indeed if $X$ is countable, or  if $\dim\h<\infty$ and $X$ is arbitrary, then $\cst$-extreme points of $\px$ are always spectral measures (see Theorem \ref{atomic $C^*$-extreme points are PVM} and Theorem \ref{POVMs on finite dimensional spaces are spectral} below). This is in stark contrast with extreme points case. Since spectral measures are more tractable objects and have classical significance, it seems very natural to study the theory of $\cst$-convexity of POVMs.

\subsection{Atomic and non-atomic POVMs}
One of the approaches that we take in this paper for exploring
$\cst$-extreme points  is via the decomposition of POVMs into atomic
and non-atomic POVMs and analysing them separately. So we recall
here the definitions and give some of their properties. These notions have been widely studied in classical measure theory. See \cite{Johnson} for a very general exposition.

\begin{definition}\label{definition of atomic and non-atomic POVM}
Let $\mu:\ox\to\bh$ be a POVM. A subset $ A\in\ox$ is called an
\emph{atom} for $\mu$ if $\mu( A)\neq0$ and whenever $ B\subseteq A$
in $\ox$,
\begin{align*}
    \text{either }\;\; \mu( B)=0\;\; \text{or}\;\;\mu( B)=\mu( A).
\end{align*}
 A POVM $\mu$ is called \emph{atomic} if every $ A\in\ox$ with $\mu( A)\neq0$ contains an atom. A POVM $\mu$ is called \emph{non-atomic} if it has no atom.
\end{definition}

We shall frequently make  use of the following remark, which is easy
to verify.
\begin{remark}
If $A$ is an atom for  a POVM $\mu$ then for any $B\subseteq A$ in $\ox$, either $\mu(B)=0$ or $B$ is an atom for $\mu$.
\end{remark}

It is a well-known fact that every finite (more generally
$\sigma$-finite) positive measure decomposes uniquely as a sum of an
atomic positive measure and a non-atomic positive measure. In a similar fashion, every
POVM decomposes uniquely  as a sum of an atomic POVM and a
non-atomic POVM (\cite{Mclaren Plosker Ram}, \cite{Davies}).
Although the proof in \cite{Mclaren Plosker Ram} (which itself is
inspired from the classical case) is for POVMs on  locally compact
Hausdorff spaces,  the same proof will work for general measurable
spaces (see the proof of Theorem \ref{thm:every povm decomposes as
direct sum of atomic and non atomic povm} below). We state it here.

\begin{theorem}\label{thm:every povm decomposes as a sum of atomic and non atomic povm}(Theorem 3.10, \cite{Mclaren Plosker Ram})
 Every POVM decomposes uniquely as a sum of an atomic POVM and a non-atomic POVM.
\end{theorem}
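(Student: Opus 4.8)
The plan is to imitate the classical decomposition of a (finite) scalar measure into its purely atomic and continuous parts, with one device to handle the operator‑valued, infinite‑dimensional setting. Fix once and for all an injective positive trace‑class operator $W\in\bh$ with $\mathrm{tr}(W)=1$; such a $W$ exists precisely because $\h$ is separable, and this is where separability enters essentially. Set $\Phi(A):=\mathrm{tr}(W\mu(A))$ for $A\in\ox$. Since $\mu$ is countably additive in the $\sigma$‑weak topology and $\mathrm{tr}(W\,\cdot\,)$ is $\sigma$‑weakly continuous, $\Phi$ is a finite positive measure with $\Phi(X)\le\|\mu(X)\|$; and because $W$ is faithful and $\mu(A)\ge 0$, one has $\Phi(A)=0$ iff $\mu(A)=0$. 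From the definition it is then immediate that a set is an atom of $\mu$ exactly when it is an atom of $\Phi$. Consequently any family of pairwise disjoint atoms of $\mu$ yields pairwise disjoint sets of strictly positive $\Phi$‑measure and hence is countable.

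For existence I would use Zorn's lemma to pick a maximal family $\{A_i\}_{i\in I}$ of pairwise disjoint (as sets) atoms of $\mu$, which is countable by the above, put $S=\bigcup_{i}A_i\in\ox$, and define $\mu_a(A):=\mu(A\cap S)$ and $\mu_c(A):=\mu(A\setminus S)$. These are POVMs and $\mu=\mu_a+\mu_c$. To see $\mu_a$ is atomic: if $\mu_a(A)\neq 0$ then $\Phi(A\cap A_i)>0$ for some $i$, so $A\cap A_i$ is a $\mu$‑atom contained in $A$, and since $\mu_a$ and $\mu$ agree on subsets of $A_i\subseteq S$ it is a $\mu_a$‑atom. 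To see $\mu_c$ is non‑atomic: an atom $A$ of $\mu_c$ would make $A\setminus S$ a $\mu_c$‑atom (by the Remark on atoms), hence a $\mu$‑atom because $\mu_c$ and $\mu$ agree on subsets of $A\setminus S$; being disjoint from every $A_i$, it would enlarge the maximal family — a contradiction.

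For uniqueness, suppose $\mu=\mu_a'+\mu_c'$ with $\mu_a'$ atomic and $\mu_c'$ non‑atomic. First I would show $\mu_c'(S)=0$: for each $i$, if $\mu_c'(A_i)\neq 0$ then, $\mu_c'$ having no atom, there is $B\subseteq A_i$ with $\mu_c'(B)\notin\{0,\mu_c'(A_i)\}$; combining $\mu_c'\le\mu$ with the fact that $A_i$ is a $\mu$‑atom (so $\mu(B)\in\{0,\mu(A_i)\}$) forces a contradiction. Hence $\mu_a'(\,\cdot\cap S)=\mu(\,\cdot\cap S)=\mu_a(\,\cdot\,)$, and it remains to prove $\mu_a'(X\setminus S)=0$. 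If not, atomicity of $\mu_a'$ gives a $\mu_a'$‑atom $A\subseteq X\setminus S$; by maximality of $\{A_i\}$ no subset of $A$ is a $\mu$‑atom, so $\mu|_A$, and therefore $\Phi|_A$, is non‑atomic. Putting $c:=\mathrm{tr}(W\mu_a'(A))>0$, any $B\subseteq A$ with $\mu_a'(B)\neq 0$ satisfies $\mu_a'(B)=\mu_a'(A)$, so $\mu(B)\ge\mu_a'(A)$ and $\Phi(B)\ge c$; equivalently, $\Phi(B)<c$ forces $\mu_a'(B)=0$. Since $\Phi|_A$ is a finite non‑atomic measure, a Sierpi\'nski‑type splitting partitions $A=\bigsqcup_{k=1}^{N}C_k$ with $\Phi(C_k)<c$ for all $k$, whence $\mu_a'(A)=\sum_{k}\mu_a'(C_k)=0$, a contradiction. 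Thus $\mu_a'=\mu_a$ and $\mu_c'=\mu_c$.

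The step I expect to be the main obstacle is precisely this last one — subdividing a non‑atomic POVM into finitely many pieces of "small size". The operator norm is too blunt for this, and the point of the auxiliary faithful scalar measure $\Phi$ is exactly to reduce both this subdivision and the countability of the atom family to classical facts about ordinary finite non‑atomic measures; separability of $\h$ is what makes a faithful $W$ (hence $\Phi$) available in the first place.
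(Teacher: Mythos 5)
Your proof is correct: the scalarization $\Phi(\cdot)=\mathrm{tr}(W\mu(\cdot))$ with a faithful density operator does give a finite measure with exactly the same null sets and hence the same atoms, the Zorn/maximal-family construction with $S=\bigcup_iA_i$ yields the decomposition, and the uniqueness argument (killing $\mu_c'$ on $S$ via domination by the atom $A_i$, and killing $\mu_a'$ off $S$ via the Sierpi\'nski splitting of the non-atomic $\Phi|_A$ into pieces of $\Phi$-measure less than $c$) is sound. The route is the same in spirit as the paper's, but organized differently: the paper does not prove this statement in place, citing Theorem 3.10 of McLaren--Plosker--Ramsey and pointing to its later direct-sum decomposition for $\cst$-extreme points, where countability of the atom family comes from mutual orthogonality of the projections $\mu(B_j)$ and uniqueness is deduced from the mutual singularity of atomic and non-atomic POVMs (Proposition \ref{atomic and non atomic povms are mutually singular}, which uses the same faithful-normal-state trick you use, but then quotes the classical singularity theorem for scalar measures). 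You instead use $\Phi$ both for countability of the maximal atom family in the general (non-extreme) setting and for a hands-on uniqueness argument, essentially re-proving the relevant fragment of the classical singularity result via the small-pieces partition. What your version buys is a self-contained proof of the general statement on an arbitrary measurable space, independent of the $\cst$-extreme machinery and of the external reference; what the paper's route buys is brevity, since once Proposition \ref{atomic and non atomic povms are mutually singular} is available, uniqueness follows by comparing concentration sets rather than by an explicit splitting argument.
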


We end this section by making an useful observation on atoms of
POVMs which shall be frequently used  in the paper.

\begin{proposition}\label{mu is atomic iff pi is atomic}
Let $\mu:\ox\to\bh$ be a POVM with the minimal Naimark dilation
$(\pi,V,\hpi)$. Then a subset $ A\in\ox$ is an atom for $\mu$ if
and only if $ A$ is an atom for $\pi$. In particular, $\mu$ is
atomic (non-atomic) if and only if $\pi$ is atomic (non-atomic).
\end{proposition}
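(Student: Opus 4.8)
The plan is to transfer the atom structure across the Naimark dilation using the minimality condition $\hpi = [\pi(\ox)V\h]$ and Proposition \ref{zero sets of mu and pi are same}, which already says $\mu(A)=0 \iff \pi(A)=0$. First I would prove the easy direction: suppose $A$ is an atom for $\pi$. Then $\pi(A)\neq 0$, so $\mu(A)\neq 0$. If $B\subseteq A$ with $B\in\ox$, then $\pi(B)\in\{0,\pi(A)\}$, hence $\mu(B)=V^*\pi(B)V\in\{0,\mu(A)\}$. So $A$ is an atom for $\mu$.

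The substantive direction is the converse: assume $A$ is an atom for $\mu$ and show $A$ is an atom for $\pi$. Again $\mu(A)\neq 0$ gives $\pi(A)\neq 0$. The difficulty is that a priori there could be many measurable $B\subseteq A$ with $\pi(B)$ strictly between $0$ and $\pi(A)$ while $\mu(B)=V^*\pi(B)V$ still collapses to $0$ or $\mu(A)$; injectivity of $D\mapsto V^*DV$ is not available here since we are not assuming $\mu$ extreme. The key idea is to use minimality to reconstruct $\pi(B)$ not just from $\mu(B)=V^*\pi(B)V$ but from the whole family $\{V^*\pi(C\cap B)V : C\in\ox\} = \{\mu(C\cap B) : C\in\ox\}$. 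Concretely, for $B\subseteq A$ consider the inner products $\langle \pi(B)\pi(C_1)Vh_1,\ \pi(C_2)Vh_2\rangle = \langle \pi(C_2\cap B\cap C_1)Vh_1, Vh_2\rangle = \langle \mu(C_2\cap B\cap C_1)h_1, h_2\rangle$ for $C_1,C_2\in\ox$, $h_1,h_2\in\h$; since vectors of the form $\pi(C)Vh$ are total in $\hpi$, the operator $\pi(B)$ is completely determined by the scalar measures $C\mapsto \mu(C\cap B)$. Now fix $B\subseteq A$. Since $A$ is an atom for $\mu$, for each fixed $C\in\ox$ we have $C\cap B\subseteq A$, so $\mu(C\cap B)\in\{0,\mu(A)\}$. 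I would like to conclude that either $\mu(C\cap B)=0$ for all $C$ (forcing $\mu(B)=\mu(X\cap B)=0$, hence $\pi(B)=0$) or else $\mu((A\setminus B)\cap C)=0$ for all $C$ (forcing $\mu(A\setminus B)=0$, hence by the easy direction $\pi(A\setminus B)=0$, so $\pi(B)\cap\ran\pi(A)=\pi(A)$, i.e.\ $\pi(A\cap B)=\pi(A)$, and since $\pi(B)=\pi(A\cap B)+\pi(B\setminus A)$ with the first summand giving $\pi(A)$ — actually one gets $\pi(B)=\pi(A)$ once $B\subseteq A$). The dichotomy comes from: if $\mu(B)\neq 0$ then taking $C=X$ gives $\mu(B)=\mu(A)$, and then for any $C$, $\mu((A\setminus B)\cap C) = \mu(A\cap C) - \mu(B\cap C)$; using that both $\mu(A\cap C)$ and $\mu(B\cap C)$ lie in $\{0,\mu(A)\}$ and $\mu(B\cap C)\le \mu(A\cap C)$ (since $B\cap C\subseteq A\cap C$) one checks the difference is $0$ exactly when... here I need to be a little careful, because $\mu(A\cap C)$ could be $\mu(A)$ while $\mu(B\cap C)=0$.

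To close that gap cleanly, I would instead argue directly at the level of $\pi$ via the remark after the definition of atom: for $B\subseteq A$, either $\mu(B)=0$ (then $\pi(B)=0$, done) or $B\cap A=B$ is again an atom for $\mu$ with $\mu(B)=\mu(A)$; in the latter case apply the same reasoning to $A\setminus B\subseteq A$: either $\mu(A\setminus B)=0$, whence $\pi(A\setminus B)=0$ and so $\pi(B)=\pi(B)+\pi(A\setminus B)-\pi(A\setminus B)=\pi(A)-\pi(A\setminus B)$ — more simply $\pi(A)=\pi(B)+\pi(A\setminus B)=\pi(B)$ — or $\mu(A\setminus B)=\mu(A)=\mu(B)$, which is impossible since $\mu(B)+\mu(A\setminus B)=\mu(A)$ would force $2\mu(A)=\mu(A)$, i.e.\ $\mu(A)=0$, contradicting $\mu(A)\neq 0$. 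So in every case $\pi(B)\in\{0,\pi(A)\}$, i.e.\ $A$ is an atom for $\pi$. The ``in particular'' statement is then formal: $\mu$ atomic means every $A$ with $\mu(A)\neq 0$ contains an atom of $\mu$, equivalently (using $\mu(A)=0\iff\pi(A)=0$ and the equivalence of atoms just proved) every $A$ with $\pi(A)\neq 0$ contains an atom of $\pi$; and $\mu$ non-atomic $\iff$ $\mu$ has no atom $\iff$ $\pi$ has no atom $\iff$ $\pi$ non-atomic. The only real obstacle is the bookkeeping in the dichotomy argument above — making sure the additivity identity $\mu(B)+\mu(A\setminus B)=\mu(A)$ is exploited to rule out the bad case — but once that additivity is used the argument is short; no appeal to extremality or to the Radon--Nikodym theorem is needed.
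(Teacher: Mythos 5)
Your final argument is correct and is essentially the paper's own proof: both reduce the atom condition, via additivity, to the dichotomy that for every measurable $B\subseteq A$ either $\mu(B)=0$ or $\mu(A\setminus B)=0$, and then transfer this through Proposition \ref{zero sets of mu and pi are same} (together with $\mu(A)\neq 0\iff\pi(A)\neq 0$). The initial detour through minimality and the totality of $\{\pi(C)Vh\}$ is unnecessary, since the zero-set correspondence already encapsulates all the use of minimality that is needed.
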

\begin{proof}
For any subset $ A\in\ox$,  $ A$ is an atom for $\mu$ if and  only
if $\mu(A)\neq 0$ and for each $ A'\subseteq A$ in $\ox$, we have either $\mu( A')=0$ or $\mu( A\setminus
A')=0$. Equivalently  $\pi(A)\neq 0$ and we have  either $\pi( A')=0$ or $\pi( A\setminus
A')=0$ from Proposition \ref{zero sets of mu and pi are same}, which
in turn is same as saying that $ A$ is an atom for $\pi$. The second assertion easily follows from the first.
\end{proof}

\section{Main Results on $C^*$-extreme Points}\label{main results on C*-extreme points}

As  mentioned earlier, $\px$ denotes the collection of all normalized
POVMs from $\ox$ to $\bh$.  We already saw that $\px$ is a convex
set and Theorem \ref{thm:extrme point criterion for POVM} gives an
abstract characterization of extreme points of $\px$. In the rest
of the paper, we  look into a non-commutative convexity structure of
$\px$, called quantum convexity or $\cst$-convexity. As said
earlier, the notion of $\cst$-convexity was  introduced in
\cite{Loebl Paulsen} for a subset of  $\bh$. In \cite{Farenick
Morenz}, it is generalized to the collection of unital completely
positive maps. Further one can see the definition of $C^*$-convexity
being modified and studied by \cite{Magajna} in different settings.
The notion has also been studied by Farenick et al. \cite{Farenick
Plosker Smith} for positive operator valued measures, which is our
main interest in this paper. Some general references on this topic
are \cite{Loebl Paulsen},  \cite{Farenick Morenz}, \cite{Farenick
Zhou}, \cite{Gregg}, \cite{Zhou}, \cite{Magajna} and \cite{Farenick
Plosker Smith}.

\begin{definition}\label{definition of C*-convex sets}
For any $\mu_i\in\px$ and $T_i\in\bh$, $1\leq i\leq  n$ with $\sum_{i=1}^n T_i^*T_i=I_\h$, a sum of the form
\begin{equation}\label{C^*sum}
 \mu(\cdot)=   \sum_{i=1}^nT_i^*\mu_i(\cdot)T_i
\end{equation}
 is called a \emph{$\cst$-convex combination} for $\mu$. The operators $T_i$'s here
  are called {\em $\cst$-coefficients}. When $T_i$'s are invertible,
  the sum in \eqref{C^*sum} is called a
  \emph{proper $\cst$-convex combination} for $\mu$.
  \end{definition}

 Observe that $\px$ is a {\em $\cst$-convex set} in the sense that it is closed
under $\cst$-convex   combinations. Now the following definition of $\cst$-extreme points is
the POVM analogue of the definition in \cite{Farenick Morenz} for
unital completely positive maps.

\begin{definition}\label{definition of C*-extreme points}
A normalized POVM $\mu:\ox\to\bh$ is called a \emph{$C^*$-extreme point} in
$\px$ if, whenever  $\sum_{i=1}^nT_i^*\mu_i(\cdot)T_i$  is a proper
$C^*$-convex combination for $\mu$, then each $\mu_i$ is unitarily
equivalent to $\mu$ i.e. there are unitary operators $U_i\in\bh$
such that $\mu_i(\cdot)=U_i^*\mu(\cdot)U_i $ for $1\leq i\leq n.$
\end{definition}

\subsection{Abstract characterizations of $C^*$-extreme points}
 Farenick and Zhou  \cite{Farenick Zhou} obtained a
 characterization of $\cst$-extreme points for unital completely positive maps.
  The same can be translated into the language of POVMs and one obtains a characterization for
   $\cst$-extreme points of $\px$.

As we are dealing with the more general case of  arbitrary
measurable spaces and also because we  are deliberately making
slight changes in the statements, we are providing the proof here
for completeness.

%Add the theorem name in the following.
\begin{theorem}\label{thm:Farenick and Zhou characterization of $C^*$-extreme points}(Theorem 3.1, \cite{Farenick Zhou})
Let $\mu:\ox\to\bh$ be a normalized POVM with  the minimal Naimark
dilation $(\pi,V,\hpi)$. Then $\mu$ is a $C^*$-extreme point in
$\px$ if and only if for any positive operator $D\in\pi(\ox)'$ with
$V^*DV$ being invertible, there exists a co-isometry $U\in\pi(\ox)'$
(i.e. $UU^*=I_{\hpi})$ satisfying $U^*U\sqd=\sqd$ and an invertible operator
$S\in\bh$ such that $UD^{1/2}V=VS$.
\end{theorem}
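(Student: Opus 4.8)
The plan is to transfer the known characterization of $C^*$-extreme UCP maps (Theorem 3.1 of \cite{Farenick Zhou}) to POVMs via the Naimark dilation, which plays here the role of the minimal Stinespring dilation. Recall that if $(\pi, V, \hpi)$ is the minimal Naimark dilation of $\mu$, then by the Radon--Nikodym type theorem (Theorem \ref{thm:Radon-Nikodym type theorem}) the POVMs dominated by $\mu$ are exactly those of the form $A \mapsto V^*D\pi(A)V$ for a positive contraction $D \in \pi(\ox)'$, and $\mu = V^*\pi(\cdot)V$ with $V$ an isometry. So the ``coefficient algebra'' for dominated POVMs is precisely $\pi(\ox)'$, and the pairing $D \mapsto V^*DV$ tells us how the normalization $\mu(X) = I_\h$ is distributed.

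I would argue both directions. For the forward direction, suppose $\mu$ is $C^*$-extreme and let $D \in \pi(\ox)'$ be positive with $V^*DV$ invertible; we may scale so that $0 \le D \le I_{\hpi}$ and in fact (after shrinking) that $V^*DV \le I_\h$ with $I_\h - V^*DV$ also admitting a ``square root inside $\pi(\ox)'$'' via $I_{\hpi} - D$. Set $T_1 = (V^*DV)^{1/2}$ and $T_2 = (I_\h - V^*DV)^{1/2}$; these are invertible (the second after a further small perturbation, or one handles the degenerate case separately). Define $\mu_1(A) = T_1^{-1}V^*D\pi(A)V T_1^{-1}$ and $\mu_2(A) = T_2^{-1}V^*(I_{\hpi}-D)\pi(A)V T_2^{-1}$; one checks these are normalized POVMs and $\mu = T_1^*\mu_1 T_1 + T_2^*\mu_2 T_2$ is a proper $C^*$-convex combination. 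By $C^*$-extremity there is a unitary $U_1 \in \bh$ with $\mu_1 = U_1^*\mu U_1$. Now both $D^{1/2}\pi(\cdot)V$ and $\pi(\cdot)VU_1 T_1$ are ``Naimark-type'' data for the POVM $T_1\mu_1(\cdot)T_1 = V^*D\pi(\cdot)V$; invoking the uniqueness clause of Naimark's theorem applied to the minimal dilation sitting inside, one produces a partial isometry (which can be arranged to be a co-isometry, using minimality of $\pi$) $U \in \pi(\ox)'$ with $U^*U D^{1/2} = D^{1/2}$ and $U D^{1/2} V = V U_1 T_1$; taking $S = U_1 T_1$, which is invertible, gives the claim. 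For the converse, suppose the co-isometry condition holds and let $\mu = \sum_{i=1}^n T_i^*\mu_i(\cdot)T_i$ be a proper $C^*$-convex combination. Each $T_i^*\mu_i(\cdot)T_i \le \mu$, so Theorem \ref{thm:Radon-Nikodym type theorem} yields positive $D_i \in \pi(\ox)'$ with $T_i^*\mu_i(\cdot)T_i = V^*D_i\pi(\cdot)V$ and $\sum D_i$ has $V^*(\sum D_i)V = I_\h$; since $\mu_i$ is normalized, $V^*D_iV = T_i^*T_i$ is invertible. Apply the hypothesis to $D_i$ to get a co-isometry $U_i \in \pi(\ox)'$ and invertible $S_i \in \bh$ with $U_iD_i^{1/2}V = VS_i$; then $\mu_i(\cdot) = (T_i^{-1})^*V^*D_i^{1/2}\pi(\cdot)D_i^{1/2}V T_i^{-1} = (T_i^{-1})^*S_i^* V^*\pi(\cdot)V S_i T_i^{-1} = (S_i T_i^{-1})^*\mu(\cdot)(S_i T_i^{-1})$, using that $U_i \in \pi(\ox)'$ and $U_iU_i^* = I_{\hpi}$. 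Finally one checks $S_i T_i^{-1}$ is a unitary: it is invertible by construction, and $(S_iT_i^{-1})^*(S_iT_i^{-1})\,\mu(X)^{1/2} = (S_iT_i^{-1})^*(S_iT_i^{-1})$ forces it to be unitary since $\mu(X) = I_\h$ and $\mu_i(X) = I_\h$ give $(S_iT_i^{-1})^*(S_iT_i^{-1}) = I_\h$ directly. Hence $\mu$ is $C^*$-extreme.

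The step I expect to be the main obstacle is the forward direction: producing the co-isometry $U \in \pi(\ox)'$ with the precise compatibility relations $U^*U\sqd = \sqd$ and $U\sqd V = VS$. The unitary $U_1$ on $\h$ relating $\mu_1$ to $\mu$ only lives downstairs, and one must lift it to the dilation spaces in a way that stays inside the commutant $\pi(\ox)'$ and is compatible with the structure operator $D$; this requires carefully matching the two minimal Naimark-type representations of the sub-POVM $V^*D\pi(\cdot)V$ and tracking how much of $\hpi$ is ``seen'' by $D^{1/2}$ (hence why $U$ is only a co-isometry with $U^*U \sqd = \sqd$, rather than a genuine unitary). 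Handling the edge cases where $V^*DV$ or $I_\h - V^*DV$ fails to be invertible — where one cannot literally form a proper convex combination — also needs a small perturbation or limiting argument, but this is routine compared with the lifting.
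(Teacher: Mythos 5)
Your proposal is correct and follows essentially the same route as the paper: the forward direction builds the proper $C^*$-convex combination with coefficients $(V^*DV)^{1/2}$ and $(I_\h-V^*DV)^{1/2}$, invokes $C^*$-extremity to get a unitary downstairs, and then uses uniqueness of the minimal Naimark dilation (of the dominated POVM $V^*D\pi(\cdot)V$, whose minimal space is $\cran(D^{1/2})$) to produce the co-isometry $U\in\pi(\ox)'$ with $U^*UD^{1/2}=D^{1/2}$ and $UD^{1/2}V=VS$, while the converse is the same Radon--Nikodym computation. The only point where the paper is tidier is your ``degenerate case'': instead of a perturbation or limiting argument, one simply replaces $D$ by $\alpha D$ with $\|\alpha D\|<1$ (harmless, since the conclusion for $\alpha D$ gives it for $D$ after rescaling $S$), which makes $I_\h-\alpha V^*DV$ invertible outright.
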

\begin{proof}
First assume that $\mu$ is $C^*$-extreme in $\px$. Let
$D\in\pi(\ox)'$ be positive with $V^*DV$ invertible.  Choose
$\alpha>0$  such that $\|\alpha D\|< 1.$ This ensures that $I_{\h_\pi}-\alpha D$ is positive and invertible. Also $\|\alpha
V^*DV\|<1$ and hence $I_\h-\alpha V^*DV$ is positive and
invertible. Set
$$T_1=(\alpha V^*DV)^{1/2}~\text{ and }~T_2=(I_\h-\alpha V^*DV)^{1/2}.$$
Then both $T_1$ and $T_2$ are invertible and $T_1^*T_1+T_2^*T_2=I_\h$. Now we define POVMs $\mu_i:\ox\to\bh$, $i=1,2$ by
\begin{align}
    \mu_1(A)=T_1^{-1}\left(\alpha V^*D\pi(A)V\right)T_1^{-1} \text{ and }\mu_2(A)=T_2^{-1}\left(V^*
    (I_{\h_\pi}-\alpha D)\pi(A)V\right)T_2^{-1},
\end{align}
for all $A\in\ox$. It is clear that $\mu_i$ is a POVM and
$\mu_i(X)=I_\h$.  Also,
\begin{align*}
    T_1^*\mu_1(A)T_1+T_2^*\mu_2(A)T_2=V^*\pi(A)V=\mu (A)~~\mbox{ for all }A\in\ox.
\end{align*}
 Therefore since $\mu$ is $C^*$-extreme, there exists a  unitary $W\in\bh$ such that $\mu(\cdot)=W^*\mu_1(\cdot)W$. This  implies
$$  \mu(\cdot)=W^*T_1^{-1}(\alpha V^*D\pi(\cdot)V)T_1^{-1}W
    =\left(\sqrt{\alpha}W^*T_1^{-1}V^*\sqd\right)\pi(\cdot)\left(\sqrt{\alpha}\sqd VT_1^{-1}W\right)
    =V_1^*\pi(\cdot)V_1,$$
where $V_1=\sqrt{\alpha}\sqd VT_1^{-1}W\in\B(\h,\hpi)$. Note that
$V_1^*V_1=V_1^*\pi(X)V_1=\mu(X)=I_\h,$ and so $V_1$ is an isometry.
Now if we set
$$\K=[\pi\left(\ox\right)V_1\h]\subseteq\hpi,$$
then $\K$ is a reducing subspace for all $\pi(A)$,  $A\in \ox $. Also
$\ran(V_1)=\pi(X)V_1\h\subseteq\K,$ and if we think $V_1$ as an
operator from $\h$ into $\K$,  then  $(\pi(\cdot)_{|_\K},V_1,\K)$ is
the minimal Naimark dilation for $\mu$. Therefore, by the uniqueness
of  minimal dilation, there exists a unitary $U:\K\to\hpi$ satisfying
\begin{align*}
    UV_1=V\text{ and } \pi(A)U=U\pi(A)_{|_\K}~~\mbox{ for all }A\in\ox.
\end{align*}
Extend $U$ to the whole of $\hpi$ by assigning  it to be $0$ on
$\hpi\ominus\K$, which we  still denote
%^the extended operator
by $U$. Here $\hpi\ominus\K$ denotes the orthogonal complement of $\K$ in $\hpi$. It is immediate that
$$U^*U=P_\K~\text{ and }~UU^*=I_{\hpi}$$
where $P_\K$ is the projection of $\hpi$ onto $\K$, which is to say that  $U$ is a co-isometry. We also have
$$\pi(A)U=U\pi(A)~~\mbox{ for all } A\in\ox,$$
 and hence $U\in\piox'$. Set
$$S=\sqrt{\alpha}^{-1}W^*T_1\in\bh.$$
 Then $S$ is invertible and, since $UV_1=V$ and $W$ is a unitary, we get
\begin{align*}
    VS=UV_1S=\sqrt{\alpha} \sqrt{\alpha}^{-1}U\sqd V  T_1^{-1}WW^*T_1 =U\sqd V.
\end{align*}
The only remaining thing is to show  that $U^*U\sqd=\sqd$.  Since
$U^*U$ is a projection onto $\K$, this will follow once we show that
$\K=\cran(\sqd)$. Now using $\hpi=[\piox V\h]$ and invertibility
of $T_1$ and  $W$, we obtain
\begin{align*}
    \K=[\piox V_1\h]=[\piox\sqd V\left( \sqrt{\alpha}T_1^{-1}W\h\right)] =[\sqd \piox V\h]=\cran(\sqd).
\end{align*}
For the converse, assume that the given statement in `only if'
part  is true. Let $\mu=\sum_{i=1}^nT_i^*\mu_i(\cdot) T_i$ be a
proper $C^*$-convex combination. Fix any $i\in \{1,\ldots,n\}$.
Since $T_i^*\mu_i(\cdot) T_i\leq \mu ,$ it follows from
Radon-Nikodym type Theorem (Theorem \ref{thm:Radon-Nikodym type
theorem}) that there exists a positive operator $D_i\in\pi(\ox)'$
satisfying
$$T_i^*\mu_i(A)T_i=V^* D_i\pi(A) V~~\mbox{ for all }A\in\ox.$$
Then $V^*D_iV=T_i^*T_i$ and since $T_i$ is invertible, it follows that $V^*D_iV$ is invertible. Hence the hypothesis
ensures the existence of an operator $U_i\in\piox'$ satisfying
$U_i^*U_iD_i^{1/2}=D_i^{1/2}$ and an invertible
operator $S_i\in\bh$ such that $U_iD_i^{1/2}V=VS_i$. Thus,
\begin{align*}
 T_i^*\mu_i(\cdot)T_i
    &=V^*D_i\pi(\cdot)V=V^*D_i^{1/2}\pi(\cdot)D_i^{1/2}V=V^*D_i^{1/2}\pi(\cdot)U_i^*U_iD_i^{1/2}V=V^*D_i^{1/2}U_i^*\pi(\cdot) U_iD_i^{1/2}V\\
    &=\left(U_iD_i^{1/2}V\right)^*\pi(\cdot)\left(U_iD_i^{1/2}V\right)
    =(VS_i)^*\pi(\cdot)(V S_i)=S_i^*\left(V^*\pi(\cdot)V\right)S_i
    =S_i^*\mu(\cdot) S_i,
\end{align*}
which implies $\mu_i(\cdot)=T_i^{*^{-1}}S_i^*\mu(\cdot)
S_iT_i^{-1}=R_i^*\mu(\cdot) R_i,$ where $R_i=S_iT_i^{-1}$. It is
clear that $R_i$ is invertible and  since, $R_i^*R_i=\mu_i(X)=I_\h,$
it follows that $R_i$ is a unitary. This shows that $\mu_i$ is
unitarily equivalent to $\mu$, as required to conclude that $\mu$ is
a $C^*$-extreme point in $\px$.
\end{proof}

\begin{remark}
 In the statement of the theorem above, $U$ is a co-isometry. It is not
 clear at this point as to whether $U$ can be chosen to be a unitary, as claimed in (Theorem 3.1, \cite{Farenick Zhou}).
 Of course this is automatic if $\hpi $ is finite dimensional.
\end{remark}

The following is an immediate corollary of Theorem \ref{thm:Farenick and Zhou characterization of $C^*$-extreme points}. This can also be deduced from a result of Loebl and Paulsen (Proposition 26, \cite{Loebl Paulsen}), which says that projections are $\cst$-extreme points in the set of all positive contractions of $\bh$, although it needs a bit of effort.

\begin{corollary}\label{PVM is C*-extreme}
Every spectral measure is a $C^*$-extreme point in $\px$.
\end{corollary}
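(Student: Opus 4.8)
The plan is to invoke Theorem~\ref{thm:Farenick and Zhou characterization of $C^*$-extreme points}, using the elementary fact that a spectral measure is, up to unitary equivalence, its own minimal Naimark dilation. Indeed, if $\mu\in\px$ is a spectral measure on $\h$, then taking $\pi=\mu$, $V=I_\h$ and $\hpi=\h$ we have $\mu(A)=V^*\pi(A)V$ for all $A\in\ox$, the operator $V=I_\h$ is trivially an isometry (consistent with $\mu$ being normalized), and the minimality condition $\hpi=[\pi(\ox)V\h]$ holds since already $\pi(X)\h=\h$. So $(\mu,I_\h,\h)$ is a minimal Naimark dilation of $\mu$.

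With this identification we have $\pi(\ox)'=\mu(\ox)'$ and $V^*DV=D$ for every $D\in\bh$. So, to verify the criterion of Theorem~\ref{thm:Farenick and Zhou characterization of $C^*$-extreme points}, let $D\in\mu(\ox)'$ be positive with $V^*DV=D$ invertible; I would simply take $U=I_\h$ and $S=\sqd$. Then $U$ is a unitary, hence in particular a co-isometry, and it lies in $\mu(\ox)'$; the relation $U^*U\sqd=\sqd$ is immediate; $S=\sqd$ is invertible because $D$ is positive and invertible; and $U\sqd V=\sqd=VS$. Thus all the conditions in the ``only if'' direction of Theorem~\ref{thm:Farenick and Zhou characterization of $C^*$-extreme points} are satisfied, and therefore $\mu$ is a $C^*$-extreme point of $\px$.

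There is no genuine obstacle in this argument: the entire content has been pushed into Theorem~\ref{thm:Farenick and Zhou characterization of $C^*$-extreme points}, and once one notices that the Naimark dilation of a spectral measure is trivial, what remains is the one-line check above with $U=I$ and $S=\sqd$. As an alternative route one could instead deduce the statement from the result of Loebl and Paulsen that projections are $C^*$-extreme among the positive contractions of $\bh$, applied to $\mu(A)$ for each $A\in\ox$; this works but needs a little extra bookkeeping to extract the unitary-equivalence conclusion demanded in Definition~\ref{definition of C*-extreme points}, so the dilation-based proof seems cleanest.
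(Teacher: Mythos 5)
Your proof is correct and is essentially the paper's own argument: take the trivial minimal Naimark dilation $(\mu, I_\h, \h)$ of a spectral measure and verify the criterion of Theorem~\ref{thm:Farenick and Zhou characterization of $C^*$-extreme points} with $U=I_\h$ and $S=\sqd$. Even your closing remark about the Loebl--Paulsen alternative mirrors the paper's comment preceding the corollary.
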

\begin{proof}
If $\mu $ is a spectral measure then the minimal
dilation for $\mu$ can be taken to be $(\mu , I_\h, \h )$. For positive $D\in \mu
(X)'$ with $D (=I_\h^*DI_\h)$ invertible, we can take $U=I_\h$ and $S=D^{1/2}$ to satisfy the
criterion.
\end{proof}

Zhou in his thesis  \cite{Zhou} gave another
characterization of $\cst$-extreme points for unital completely
positive maps.  The result translates to POVM case as follows. Again as there is a slight change in the statement, we provide the proof.

\begin{corollary}\label{Zhou Characterization of $C^*$-extreme points}
(Theorem 3.1.5, \cite{Zhou})
%Let $\mu:\ox\to\bh$ be a normalized POVM.
Let $\mu\in\px$. Then $\mu$  is $C^*$-extreme in $\px$ if and only if for any
POVM $\nu:\ox\to\bh$ with $\nu\leq \mu$ and $\nu(X)$ invertible,
there exists an invertible operator $S\in\bh$ such that
$\nu(A)=S^*\mu(A)S$ for all $A\in\ox$.
\end{corollary}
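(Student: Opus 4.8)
The plan is to prove both implications directly from Definition \ref{definition of C*-extreme points}, the point being that after a suitable rescaling the domination relation $\nu\le\mu$ is precisely what is needed to build, respectively to recognize, a proper $\cst$-convex decomposition of $\mu$. (The ``only if'' half can alternatively be extracted from Theorem \ref{thm:Farenick and Zhou characterization of $C^*$-extreme points} via the Radon--Nikodym correspondence of Theorem \ref{thm:Radon-Nikodym type theorem}, but the self-contained argument below is shorter.)

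For the forward direction, suppose $\mu$ is $\cst$-extreme in $\px$ and let $\nu\le\mu$ with $\nu(X)$ invertible. Since $\mu-\nu$ is a POVM we have $0\le\nu(X)\le I_\h$, so $\nu(X)$ is a positive invertible contraction. Fix any $\alpha\in(0,1)$ and put $T_1=(\alpha\,\nu(X))^{1/2}$ and $T_2=(I_\h-\alpha\,\nu(X))^{1/2}$; these are invertible positive operators with $T_1^*T_1+T_2^*T_2=I_\h$. Define $\mu_1(A)=T_1^{-1}\big(\alpha\,\nu(A)\big)T_1^{-1}$ and $\mu_2(A)=T_2^{-1}\big((\mu-\alpha\nu)(A)\big)T_2^{-1}$. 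A routine check (using $\mu-\alpha\nu=(\mu-\nu)+(1-\alpha)\nu\ge 0$ and countable additivity in, say, the weak operator topology) shows that $\mu_1,\mu_2\in\px$ and that $T_1^*\mu_1(\cdot)T_1+T_2^*\mu_2(\cdot)T_2=\mu(\cdot)$, a proper $\cst$-convex combination of $\mu$. By $\cst$-extremity there is a unitary $W\in\bh$ with $\mu_1(\cdot)=W^*\mu(\cdot)W$, and substituting the definition of $\mu_1$ and rearranging gives $\nu(A)=\alpha^{-1}(WT_1)^*\mu(A)(WT_1)=S^*\mu(A)S$ for all $A\in\ox$, where $S:=\alpha^{-1/2}WT_1$ is invertible.

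For the converse, assume the stated property holds and let $\mu=\sum_{i=1}^n T_i^*\mu_i(\cdot)T_i$ be a proper $\cst$-convex combination. Fix $i$ and set $\nu_i:=T_i^*\mu_i(\cdot)T_i$. Then $\mu-\nu_i=\sum_{j\ne i}T_j^*\mu_j(\cdot)T_j$ is a POVM, so $\nu_i\le\mu$, and $\nu_i(X)=T_i^*T_i$ is invertible; by hypothesis there is an invertible $S_i\in\bh$ with $\nu_i(A)=S_i^*\mu(A)S_i$, that is $\mu_i(A)=R_i^*\mu(A)R_i$ with $R_i:=S_iT_i^{-1}$ invertible. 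Evaluating at $X$ gives $R_i^*R_i=\mu_i(X)=I_\h$, so each $R_i$ is a unitary and $\mu_i$ is unitarily equivalent to $\mu$; hence $\mu$ is $\cst$-extreme.

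The computations are entirely routine; the only point calling for a little attention is the normalization step in the forward direction --- choosing $\alpha$ so that $T_1,T_2$ are genuine invertible $\cst$-coefficients while $\mu-\alpha\nu$ remains a (positive, countably additive) POVM --- together with the verification that $\mu_1$ and $\mu_2$ are normalized POVMs. There is no serious obstacle here: the statement is in effect the definition of $\cst$-extremity read through the dictionary ``$\nu\le\mu$ with $\nu(X)$ invertible'' $\longleftrightarrow$ ``a summand of a proper $\cst$-convex combination of $\mu$''.
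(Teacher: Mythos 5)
Your proof is correct, and your converse is essentially identical to the paper's (domination by $\mu$ plus invertibility of $T_i^*T_i$, the hypothesis, then $R_i=S_iT_i^{-1}$ is unitary because $R_i^*R_i=\mu_i(X)=I_\h$). Where you genuinely diverge is the forward direction: the paper passes through the minimal Naimark dilation, the Radon--Nikodym type theorem (Theorem \ref{thm:Radon-Nikodym type theorem}) to produce $D\in\pi(\ox)'$ with $\nu=V^*D\pi(\cdot)V$, and then invokes the Farenick--Zhou criterion (Theorem \ref{thm:Farenick and Zhou characterization of $C^*$-extreme points}) to get the co-isometry $U$ and the factorization $UD^{1/2}V=VS$, whereas you argue dilation-free, straight from Definition \ref{definition of C*-extreme points}: you rescale $\nu$ by $\alpha\in(0,1)$, set $T_1=(\alpha\nu(X))^{1/2}$, $T_2=(I_\h-\alpha\nu(X))^{1/2}$ (invertible since $\nu(X)$ is invertible and $\nu(X)\le\mu(X)=I_\h$), and exhibit $\mu=T_1^*\mu_1(\cdot)T_1+T_2^*\mu_2(\cdot)T_2$ with $\mu_1=T_1^{-1}(\alpha\nu)T_1^{-1}$, $\mu_2=T_2^{-1}(\mu-\alpha\nu)T_2^{-1}$ normalized POVMs; unitary equivalence of $\mu_1$ with $\mu$ then yields $S=\alpha^{-1/2}WT_1$. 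This is in effect the same two-term splitting the paper uses inside its proof of Theorem \ref{thm:Farenick and Zhou characterization of $C^*$-extreme points} (with $\alpha V^*DV$ replaced by $\alpha\nu(X)$), but performed directly on $\nu$, so you never need $D$, the co-isometry, or the condition $U^*U D^{1/2}=D^{1/2}$. What you gain is a shorter, self-contained argument showing the corollary is really just the definition read through the dictionary between dominated POVMs with invertible total mass and summands of proper $\cst$-convex combinations; what the paper's route buys is the explicit logical chain from the finer dilation-theoretic characterization, which is the form actually reused elsewhere (e.g., in Theorem \ref{direct sum of C*extreme points with dijoint support is $C^*$-extreme if and only if each component is $C^*$-extreme}).
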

\begin{proof}
First assume that $\mu$ is a $C^*$-extreme point in $\px$. Let $\nu:\ox\to\bh$ be a POVM such that $\nu\leq\mu$ and $\nu(X)$ is invertible. Let $(\pi,V,\hpi)$ be the minimal Naimark dilation for $\mu$.  By Theorem \ref{thm:Radon-Nikodym type theorem}, there exists a positive operator $D\in\pi(\ox)'$ such that
$$\nu(A)=V^*D\pi(A)V ~~\text{for all } A\in\ox.$$
Since $V^*DV=\nu(X)$ and $\nu(X)$ is invertible, it follows that $V^*DV$ is invertible. Therefore, by Theorem \ref{thm:Farenick and Zhou characterization of $C^*$-extreme points} there exists a co-isometry  $U\in\pi(\ox)'$ satisfying $U^*U\sqd=\sqd$ and an invertible operator $S\in\bh$ such that $UD^{1/2}V=VS$. So for any $A\in\ox$, we get
\begin{align*}
    \nu(A)&=V^*D\pi(A)V=V^*D^{1/2}\pi(A)D^{1/2}V =V^*D^{1/2}\pi(A)U^*UD^{1/2}V
    =V^*D^{1/2}U^*\pi(A)UD^{1/2}V\\
    &=\left(UD^{1/2}V\right)^*\pi(A)\left(UD^{1/2}V\right)=(VS)^*\pi(A)(VS)=S^*\left(V^*\pi(A)V\right)S=S^*\mu(A)S.
\end{align*}
Conversely, assume the given statement in the `only if' part is true. Let
$\mu=\sum_{i=1}^nT_i^*\mu_i(\cdot)T_i$ be a proper $C^*$-convex combination. Then  $T_i^*\mu_i(\cdot)T_i\leq \mu(\cdot)$ for each $i$. Also, since
$T_i^*\mu_i(X)T_i=T_i^*T_i$
and $T_i$ is invertible, it follows that $T_i^*\mu_i(X)T_i$ is invertible. Hence by hypothesis, there exists an invertible operator $S_i\in\bh$ such that for all $A\in\ox$, we have
$$T_i^*\mu_i(A)T_i=S_i^*\mu(A)S_i$$
which when put differently yields $$\mu_i(A)=U_i^*\mu(A)U_i,$$
where $U_i=S_iT_i^{-1}$. But, since $U_i^*U_i=U_i^*\mu(X)U_i=\mu_i(X)=I_\h$ and $U_i$ is invertible, it follows that $U_i$ is a unitary.
This shows that $\mu_i$ is unitarily equivalent to $\mu$, as was required.
\end{proof}

We wish to mention that the condition of $\nu(X)$ being invertible
in the corollary above cannot be dropped.  The  original statement
(Theorem 3.1.5, \cite{Zhou}) is somewhat ambiguous about the
invertibility requirement in the characterization. But it is crucial
as the following example shows. Here $\T$ is the unit circle and $\mathcal{O}(\T)$ is the Borel $\sigma$-algebra of $\T$.

\begin{example}
Consider the normalized POVM $\mu:\mathcal{O}(\T)\to\B(\h^2)$ defined by
\begin{align*}
   \mu(A)=P_{\h^2}{M_{\chi_A}}_{|_{\h^2}}=T_{\chi_A}~~\text{for all }A\in \mathcal{O}(\T),
\end{align*}
where $\h^2$ is the Hardy space on $\T$ and $T_f=P_{\h^2}{M_f}_{|_{\h^2}}$ denotes the Toeplitz operator for any $f\in L^\infty$($=L^\infty(\T,l)$, where $l$ denotes the one-dimensional Lebesgue measure on $\T$). Here $\chi_A$ denotes the characteristic function for the subset $A$. It is known that $\mu$ is $\cst$-extreme in $\p_{\h^2}(\T)$ (see Example \ref{existence of a non-homomorphic C^*-extreme points on an uncountable metric space}).
Let $C\subseteq \T$ be a Borel subset such that  $l(C)\neq0$ and $l(\T\setminus C)\neq0$. Consider $\nu:\mathcal{O}(\T)\to\B(\h^2)$ defined by
\begin{align*}
    \nu(A)=\mu(A\cap C)=P_{\h^2}{M_{\chi_{(A\cap C)}}}_{|_{\h^2}}=T_{\chi_{(A\cap C)}}~~~\text{for all  }A\in\mathcal{O}(\T).
\end{align*}
It is  clear that $\nu$ is a POVM and  $\nu\leq \mu$. Also $\nu(\T)=T_{\chi_C}$ is not invertible. We claim that there is no operator $S\in\bh$ such that $\nu(\cdot)=S^*\mu(\cdot) S.$ Suppose this is not the case and $S$ is one such operator.
Note that $S^*S=\nu(\T)=T_{\chi_{C}}$. Since $l(C)$ and $l(\T\setminus C)$ are non-zero, we have $\chi_C\neq0$ and $\chi_{(\T\setminus C)}\neq0$ in $L^\infty$. It is then a  fact due to Coburn that $T_{\chi_C}$ and $T_{\chi_{(\T\setminus C)}}$ are  one-one operators (see Proposition 7.24, \cite{Douglas}) and hence $S^*S$ is one-one, which further implies that $S$ is one-one. Therefore, again as $T_{\chi_{(\T\setminus C)}}$ is one-one, it follows the operator $T_{\chi_{(\T\setminus C)}}S$ is one-one. But on the other hand, we have
\begin{align*}
    \left(T_{\chi_{(\T\setminus C)}}^{1/2}S\right)^*\left(T_{\chi_{(\T\setminus C)}}^{1/2}S\right)=S^*T_{\chi_{(\T\setminus C)}}S=S^*\mu(\T\setminus C)S=\nu(\T\setminus C)=0
\end{align*}
which implies
\begin{align*}
    T_{\chi_{(\T\setminus C)}}^{1/2}S=0
\end{align*}
and hence $T_{\chi_{(\T\setminus C)}}S=0$, leading us to a contradiction.
\end{example}

\subsection{$C^*$-extreme points with commutative ranges}
With these  two characterizations of $\cst$-extreme points at our
disposal, we are now ready to present the main results of this paper.
 Gregg \cite{Gregg} shows that if a POVM  $\mu $ is $C^*$-extreme
in $\px$ (for a compact Hausdorff space $X$) then  for any $A$ in $\ox$, the spectrum of $\mu (A)$ is
either contained in $\{ 0, 1\}$ (so that $\mu (A)$ is a projection)
 or it is whole of the interval $[0,1].$ Our main observation is
 that the second situation can be avoided in a variety of cases. The
 proof uses  straightforward Borel functional calculus, with a carefully chosen
 family of functions. These functions are necessarily discontinuous
 and so $C^*$-algebra setting and continuous functional calculus will not suffice.

\begin{theorem}\label{if mu(A) commutes with everything, then it is a projection}
Let $\mu$ be a  $C^*$-extreme point in $\px$. If $E\in\ox$ is such that $\mu(A)\mu(E)=\mu(E)\mu(A)$ for all $A\subseteq E$  in $\ox$, then $\mu(E)$ is a projection. In particular if $\mu(E)$ commutes with all $\mu(B)$ for $B\in \ox,$ then $\mu(E)$ is a projection.
\end{theorem}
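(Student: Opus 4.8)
The plan is to use $C^*$-extremality of $\mu$ by feeding it an explicit proper $C^*$-convex combination built entirely from the Borel functional calculus of the single self-adjoint contraction $T:=\mu(E)$. One can instead route the argument through Zhou's characterization, Corollary~\ref{Zhou Characterization of $C^*$-extreme points}, by replacing $\mu$ with a suitable dominated POVM $\nu\le\mu$ of the form $\nu(A)=\mu(A\setminus E)+g(T)\mu(A\cap E)$ for a Borel function $g$ chosen so that $\nu(X)=I-(I-g(T))T$ is invertible (e.g. $g=\chi_{[c,1]}$); this already indicates why discontinuous functional calculus is needed, and the bookkeeping is essentially the same either way.

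First the construction. Since $T=\mu(E)$ commutes with $\mu(A)$ for every $A\subseteq E$, it commutes with $\mu(A\cap E)$ for \emph{every} $A\in\ox$, hence so does $f(T)$ for any bounded Borel $f$; in particular $f(T)\mu(A\cap E)$ is a product of commuting positive operators whenever $0\le f\le 1$. Fix $\epsilon\in(0,\tfrac12)$, set $T_1=\bigl(\tfrac12 T+(1-\epsilon)(I-T)\bigr)^{1/2}$ and $T_2=\bigl(\tfrac12 T+\epsilon(I-T)\bigr)^{1/2}$ (both invertible by functional calculus on $T$, with $T_1^*T_1+T_2^*T_2=I$), and define $\mu_1,\mu_2:\ox\to\bh$ by
\[
\mu_1(A)=T_1^{-1}\bigl(\tfrac12\mu(A\cap E)+(1-\epsilon)\mu(A\setminus E)\bigr)T_1^{-1},\qquad
\mu_2(A)=T_2^{-1}\bigl(\tfrac12\mu(A\cap E)+\epsilon\mu(A\setminus E)\bigr)T_2^{-1}.
\]
It is routine to check that $\mu_1,\mu_2\in\px$ and that $\mu=T_1^*\mu_1(\cdot)T_1+T_2^*\mu_2(\cdot)T_2$, a proper $C^*$-convex combination. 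As $\mu$ is $C^*$-extreme, there is a unitary $U\in\bh$ with $\mu_2(\cdot)=U^*\mu(\cdot)U$.

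Now the hypothesis enters. Because $T_2$ is a function of $T$, it commutes with $\mu(A)$ for every $A\subseteq E$, so $\mu_2(A)=\tfrac12\mu(A)T_2^{-2}=\mu(A)c(T)$ for $A\subseteq E$, where $c(\lambda)=\tfrac12\bigl(\epsilon+(\tfrac12-\epsilon)\lambda\bigr)^{-1}$ satisfies $c>0$ on $[0,1]$, $c(1)=1$ and $c(\lambda)>1$ for $\lambda<1$. Taking $A=E$ gives
\[
U^*TU=Tc(T)=\phi(T),\qquad \phi(\lambda)=\frac{\lambda}{2\epsilon+(1-2\epsilon)\lambda},
\]
and $\phi$ is a strictly increasing self-homeomorphism of $[0,1]$ fixing $0$ and $1$ with $\phi(\lambda)>\lambda$ on $(0,1)$. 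When $\dim\h<\infty$ this already proves the theorem: $T$ and $\phi(T)$ have the same eigenvalues with multiplicity, but if $T$ had an eigenvalue in $(0,1)$ then applying $\phi$ to the \emph{largest} such eigenvalue would produce a strictly larger one in $(0,1)$, a contradiction; hence $\sigma(T)\subseteq\{0,1\}$ and $T$ is a projection. The ``in particular'' assertion is immediate, since if $\mu(E)$ commutes with $\mu(B)$ for every $B\in\ox$ it in particular commutes with $\mu(A)$ for every $A\subseteq E$.

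The main obstacle is the passage to infinite dimensions, and this is where discontinuous functions become unavoidable. The single relation $U^*TU=\phi(T)$ does \emph{not} by itself force $T$ to be a projection (for instance multiplication by $x$ on $L^2[0,1]$ is unitarily equivalent to multiplication by $\phi(x)$), so one must exploit the full rigidity of $\mu_2(\cdot)=U^*\mu(\cdot)U$, i.e. the coupling of the values of $\mu$ on different sets. The plan here is to rerun the construction with the constant $\tfrac12$ replaced by a whole family of coefficient functions $h$, in particular discontinuous ones of the form $h=\delta+(1-\delta)\chi_{\Delta}$ for Borel spectral sets $\Delta$ of $T$, thereby conjugating $\mu$ onto POVMs whose value at $E$ is a prescribed Borel function of $T$; combining the resulting identities with the trivial bound $0\le\mu_2(E)\le I$ should force the spectral measure of $T$ onto the fixed-point set $\{0,1\}$ of the relevant maps. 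Controlling how the conjugating operators interact with the spectral projections of $T$ — using that for $f$ bounded away from $0$ the operator $f(T)$ is invertible and commutes with every $\mu(A\cap E)$, so that $\overline{\ran\mu(A)}$, and hence the spectral subspaces of $T$ assembled from $\ker T$ and $\ker(I-T)$, are preserved — is the delicate part at the heart of the proof, and continuous functional calculus cannot reach it precisely because the spectral cut-offs $\chi_\Delta$ are discontinuous.
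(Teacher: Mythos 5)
Your construction is sound as far as it goes: the $C^*$-convex combination with coefficients $T_1,T_2$ built from $T=\mu(E)$ is legitimate, the identity $U^*TU=\phi(T)$ with $\phi(\lambda)=\lambda/(2\epsilon+(1-2\epsilon)\lambda)$ follows correctly from the commutation hypothesis, and the finite-dimensional eigenvalue argument is fine. But the theorem's content is the general (infinite-dimensional) case, and there your proof stops at a plan rather than an argument. As you yourself observe, $U^*TU=\phi(T)$ with $\phi$ an increasing self-homeomorphism of $[0,1]$ carries no spectral obstruction (your $M_x$ example on $L^2[0,1]$ shows $\sigma(T)$ can be all of $[0,1]$ and still satisfy such a relation, since $\sigma(T)=\sigma(\phi(T))=\phi(\sigma(T))$ is consistent with any $\phi$-invariant compact set). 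The proposed remedy --- rerunning the construction with coefficients $h=\delta+(1-\delta)\chi_\Delta$ and ``controlling how the conjugating operators interact with the spectral projections of $T$'' --- is exactly the part you have not carried out, and it is not clear the bookkeeping you describe closes it.

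The missing idea is the choice of the function, and it is simpler than what you sketch: one should choose the coefficient function so that the induced relation $U^*TU=g(T)$ involves a $g$ whose range \emph{omits an open interval}, i.e.\ $g$ must collapse an interval rather than be a bijection. The paper fixes $0<r<s<1$ and uses the Borel function $f_{r,s}$ (equal to $1$ off $[r,s]$ and to $\tfrac{r}{1-r}(\tfrac1t-1)$ on $[r,s]$, bounded below by a positive constant), forms $\nu(B)=\mu(B\cap E)f(T)+\mu(B\setminus E)\le\mu$ with $\nu(X)$ invertible, and invokes Corollary \ref{Zhou Characterization of $C^*$-extreme points} to get $\nu=T_0^*\mu(\cdot)T_0$ with $T_0$ invertible; polar decomposition then yields $U^*TU=g(T)$ where $g(t)=tf(t)/(1-t+tf(t))$ equals $t$ off $[r,s]$ and the constant $r$ on $[r,s]$. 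Since the spectrum is a unitary invariant and $\sigma(g(T))$ lies in the essential range of $g$, one gets $\sigma(T)\cap(r,s)=\emptyset$ in one line --- valid in any dimension and using only the single set $B=E$, with no need to track spectral subspaces of $T$ under $U$. Letting $r,s$ range over $(0,1)$ gives $\sigma(T)\subseteq\{0,1\}$. So the gap in your proposal is precisely the step where discontinuity is needed: your $\phi$ is injective on $[0,1]$ and hence useless in infinite dimensions, and the non-injective, interval-collapsing choice of $g$ (equivalently of $f_{r,s}$) that makes the single-set identity decisive is absent.
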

\begin{proof}
The second assertion is immediate from the first. So assume the hypothesis in the first statement. We claim that $\sigma(\mu(E))\cap (r,s)=\emptyset$ for all $0< r<s<1$,  where $\sigma(\mu(E))$ denotes the spectrum of the operator $\mu(E)$.  As $\mu(E)$ is  a positive contraction,   it will follow that $\sigma(\mu(E))\subseteq\{0,1\}$, which in turn will imply that $\mu(E)$ is a projection. So fix $0<r< s< 1$, and define the map $f:=f_{r,s}:[0,1]\to [0,1]$ by
\begin{equation}\label{eq:expression of f_{r,s}}
    f_{r,s}(t)=\left\{\begin{array}{cc}
        1 & \mbox{if} ~t\notin [r,s], \\
         \frac{r}{1-r}\left(\frac{1}{t}-1\right)& \mbox{if}~ t\in [r,s].
    \end{array}\right.
\end{equation}
Clearly $f$ is continuous except at one point namely $s$, and hence
it  is a Borel measurable   function. So  for any operator $0\leq
T\leq I_\h$, it follows from  spectral
theory that $f(T)$ is a well defined bounded operator. Further we note for each $t\in[0,1]$, that
$$0<\alpha:=\left(\frac{r}{1-r}\right)\left(\frac{1-s}{s}\right)\leq f(t)\leq1$$ and consequently,
\begin{equation}\label{eq:inequality of f(T))}
\alpha I_\h\leq f(T)\leq I_\h.
\end{equation}
Now consider the map  $\nu:\ox\to\bh$ defined by
\begin{equation}
    \nu(B)=\mu(B\cap E)f(\mu(E))+\mu(B\setminus E)
\end{equation}
 for any $B\in\ox$.
 %where $B\setminus A$ denotes the set minus.
 We show that $\nu$ is a POVM by observing the following:
\begin{itemize}
    \item For each $B\in\ox$, our hypothesis  says that  $\mu(B\cap E)$ and $\mu(E)$ commute and  it then implies from spectral theory that $\mu(B\cap E)$ commutes with $f(\mu(E))$. Therefore, as both $\mu(B\cap E)$ and $f(\mu(E))$ are positive operators, it follows that their product $\mu(E\cap B)f(\mu(E))$ is a positive operator, which amounts to saying  that $\nu(B)\geq 0$ in $\bh$.
    \item If $B_1,B_2,\ldots$ is a countable collection of mutually disjoint measurable  subsets of $X$ and  $B=\cup_n B_n$, then since $\mu$ is a POVM, we have in WOT convergence,
\begin{align*}
    \nu(\cup_n B_n)&=\mu((\cup_n B_n)\cap E) f(\mu(E))+\mu((\cup_n B_n)\setminus E)\\
    &=\mu(\cup_n (B_n\cap E))f(\mu(E))+\mu(\cup_n (B_n\setminus E))\\
    &=\sum_n[\mu(B_n\cap E)f(\mu(E))]+\sum_n\mu(B_n\setminus E)\\
    &=\sum_n\left[\mu(B_n\cap E)f(\mu(E))+\mu(B_n\setminus E)\right]\\
    &=\sum_n\nu(B_n).
\end{align*}
This shows that $\mu$ is countably additive, which  in particular
implies that the  function   $B\mapsto \langle h, \nu(B)k\rangle$ is
a complex  measure on $X$ for all $h,k\in\h$.
\end{itemize}
The  observations above imply that $\nu$ is a POVM. Further since $f(\mu(E))\leq I_{\h}$ from \eqref{eq:inequality of f(T))}, it follows for each $B\in \ox$, that
\begin{align*}
    \nu(B)=\mu(B\cap E)f(\mu(E))+\mu(B\setminus E)\leq \mu(B\cap E)+\mu(B\setminus E)=\mu(B)
\end{align*}
which is to say $\nu\leq \mu$. Also since $f(\mu(E))\geq \alpha I_\h$ from  \eqref{eq:inequality of f(T))}, and  $\mu(E)\leq I_\h$, we note that
\begin{align*}
    \nu(X)&=\mu( E)f(\mu( E))+\mu(X\setminus E)\\
    &\geq \alpha\mu( E)+\mu(X\setminus E)\\
    &=\alpha\mu( E)+I_\h-\mu( E)\\
    &=I_\h-(1-\alpha)\mu( E)\\
    &\geq I_\h-(1-\alpha)I_\h\\
    &=\alpha I_\h,
\end{align*}
which is equivalent to saying that $\nu(X)$ is invertible.  Therefore, as $\mu$ is a $C^*$-extreme point in $\px$, it follows from  Corollary \ref{Zhou Characterization of $C^*$-extreme points} that there exists an invertible operator $T\in \mathcal{B}({\mathcal{H}})$ satisfying the condition
\begin{align}\label{eq: expression of nu from zhou criterion}
    \nu(B)=T^*\mu(B)T ~~\mbox{ for all }B\in\ox.
\end{align}
 We note that $\nu(X)=T^*T=|T|^2$ and hence,
\begin{align}\label{eq:extrepresson for |T|}
     |T|=\nu(X)^{1/2}= \left[\mu(E)f(\mu(E))+I_\h-\mu(E)\right]^{1/2}
\end{align}
where $|T|$ denotes the square root of the positive operator $T^*T$. Set $S=\mu( E)$.
By taking $B=E$ in  \eqref{eq: expression of nu from zhou criterion}, we have
\begin{align*}
T^*ST=T^*\mu(E) T=\nu( E)=\mu( E)f(\mu( E))=Sf(S).
\end{align*}
Let $T=U|T|$ be the polar decomposition of $T$. Then  $U$ is a
unitary and $|T|$ is invertible, as  $T$ is invertible. Consequently,
\begin{align}\label{eq:U^*SU=g(S)}
    U^*SU=|T|^{-1}Sf(S)|T|^{-1}.
\end{align}
 Now let $g:[0,1]\to[0,1]$ be the map defined by
 \begin{align*}
 g(t)=\frac{tf(t)}{1-t+t f(t)}
 =\left\{\begin{array}{cc}
         t & \mbox{if}~t\notin [r,s],  \\
         r & \mbox{if} ~t\in [r,s].
    \end{array}\right.
\end{align*}
Then $g(S)$ is a well-defined bounded operator and we get
$$g(S)=Sf(S)[I_\h-S+Sf(S)]^{-1}.$$
Hence  \eqref{eq:extrepresson for |T|} and \eqref{eq:U^*SU=g(S)} yield
$$U^*SU=g(S).$$
Therefore by spectral mapping theorem (Theorem IX.8.11,
\cite{Conway}), spectrum of $S$ satisfies the following:
$$\sigma(S)=\sigma(U^*SU)=\sigma(g(S))\subseteq \text{essran}(g),
$$
where essran($g$) is the essential range of $g $ with respect to the spectral measure corresponding to the operator $S$.  But,
\begin{align*}
    \mbox{essran}(g)\subseteq \cran{(g)}\subseteq[0,r]\cup[s,1],
\end{align*}
which implies that $\sigma(S)\subseteq [0,r]\cup[s,1]$. This is same
as saying  $\sigma(S)\cap (r,s)=\emptyset$, which is what we
wanted to show.
\end{proof}

A direct application of Theorem \ref{if mu(A) commutes with everything, then it is a projection} is possible for $\cst$-extreme points with commutative range. We say
a POVM $\mu$ to be {\em commutative} if its range is commutative. It has been shown  \cite{Heinosaari Pellonpaa} that a commutative normalized  POVM
is an extreme point in $\px$ if and only if it is spectral. A
similar kind of result for $C^*$-extreme points  holds true
following the theorem above; if a $C^*$-extreme point $\mu$ in $\px$
is commutative, then it follows from Theorem \ref{if mu(A) commutes
with everything, then it is a projection} that $\mu(A)$ is
projection for all $A\in\ox$ and hence $\mu$ is spectral. Thus we have arrived at the following theorem.

\begin{theorem}\label{commutative C*-extreme points are PVM}
Let $\mu:\ox\to\bh$ be a commutative normalized POVM.
Then $\mu$ is $C^*$-extreme in $\px$ if and only if it is a spectral
measure.
\end{theorem}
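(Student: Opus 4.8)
The plan is to reduce the statement entirely to two results already established in the excerpt, so that essentially no new work is needed.

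For the ``if'' direction, I would simply invoke Corollary \ref{PVM is C*-extreme}: every spectral measure is a $C^*$-extreme point in $\px$, and commutativity plays no role here. So if $\mu$ is a spectral measure it is automatically $C^*$-extreme.

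For the ``only if'' direction, I would argue as follows. Suppose $\mu:\ox\to\bh$ is a commutative normalized POVM which is $C^*$-extreme in $\px$. Fix any $E\in\ox$. Since the range $\mu(\ox)$ is commutative, $\mu(E)$ commutes with $\mu(B)$ for every $B\in\ox$; in particular the hypothesis $\mu(A)\mu(E)=\mu(E)\mu(A)$ for all $A\subseteq E$ is trivially satisfied. Hence the ``in particular'' clause of Theorem \ref{if mu(A) commutes with everything, then it is a projection} applies and yields that $\mu(E)$ is a projection. Since $E\in\ox$ was arbitrary, $\mu(A)$ is a projection for every $A\in\ox$, i.e. $\mu$ is a projection valued measure; and as $\mu(X)=I_\h$ by assumption, $\mu$ is a spectral measure. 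This completes the proof.

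There is no real obstacle here, since all the analytic content — the Borel functional calculus argument with the discontinuous functions $f_{r,s}$ and the use of Zhou's characterization (Corollary \ref{Zhou Characterization of $C^*$-extreme points}) — has already been carried out in Theorem \ref{if mu(A) commutes with everything, then it is a projection}. The only thing to be careful about is making explicit that commutativity of the \emph{range} of $\mu$ is exactly what is needed to feed into that theorem, and that passing from ``$\mu(A)$ is a projection for all $A$'' together with normalization to ``$\mu$ is spectral'' is immediate from the definition.
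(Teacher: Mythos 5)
Your proposal is correct and matches the paper's argument exactly: the paper deduces the ``only if'' direction by feeding the commutativity of the range into Theorem \ref{if mu(A) commutes with everything, then it is a projection} to get that every $\mu(A)$ is a projection, and the ``if'' direction is Corollary \ref{PVM is C*-extreme}. Nothing is missing.
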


\subsection{Atomic $C^*$-extreme points}

Theorem \ref{if mu(A) commutes with everything, then it is a
projection} is quite powerful. Here we have more applications of it.
We examine atomic $\cst$-extreme points. First consider the following
lemma. Recall our assumption that singletons are measurable subsets.

\begin{lemma}\label{atoms are projections}
Let $\mu$ be a $C^*$-extreme  point in $\px$. Then $\mu(E)$ is a
projection for every atom $E$ for $\mu$. In particular $\mu(\{x\})$
is a projection for all $x\in X$ and consequently  $\mu(A)$ is a
projection for every countable subset $ A$ of $X$.
\end{lemma}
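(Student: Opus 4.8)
The plan is to reduce both the main assertion and its corollaries to Theorem \ref{if mu(A) commutes with everything, then it is a projection} together with Proposition \ref{prop:projection commutes with everything}; I do not expect any serious obstacle, as the content is essentially already packaged in those two results.

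First I would verify that an atom automatically satisfies the commutation hypothesis of Theorem \ref{if mu(A) commutes with everything, then it is a projection}. Let $E$ be an atom for $\mu$ and let $A\subseteq E$ be measurable. By the definition of an atom, either $\mu(A)=0$ or $\mu(A)=\mu(E)$. In the first case $\mu(A)\mu(E)=0=\mu(E)\mu(A)$, and in the second case $\mu(A)\mu(E)=\mu(E)^2=\mu(E)\mu(A)$; so in either case $\mu(A)$ commutes with $\mu(E)$. Since $\mu$ is $C^*$-extreme in $\px$, Theorem \ref{if mu(A) commutes with everything, then it is a projection} now applies and yields that $\mu(E)$ is a projection.

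For the singleton statement, fix $x\in X$. If $\mu(\{x\})=0$ there is nothing to prove. Otherwise the only measurable subsets of $\{x\}$ are $\emptyset$ and $\{x\}$, and $\mu(\emptyset)=0$, so $\{x\}$ is an atom for $\mu$; the first part then gives that $\mu(\{x\})$ is a projection. Finally, for a countable subset $A=\{x_1,x_2,\dots\}\subseteq X$, each $\mu(\{x_i\})$ is a projection by what we just proved, so Proposition \ref{prop:projection commutes with everything} gives $\mu(\{x_i\})\mu(\{x_j\})=\mu(\{x_i\}\cap\{x_j\})=\mu(\emptyset)=0$ for $i\neq j$; thus $\{\mu(\{x_i\})\}_i$ is a family of mutually orthogonal projections. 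By countable additivity of $\mu$ (in SOT, as recorded after the definition of POVM), $\mu(A)=\sum_i\mu(\{x_i\})$ is the SOT-limit of the increasing sequence of projections $P_N=\sum_{i=1}^N\mu(\{x_i\})$, and an increasing SOT-limit of projections is again a projection; hence $\mu(A)$ is a projection. The only step that is not purely formal is this last convergence remark, and it is entirely standard.
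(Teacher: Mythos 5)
Your proof is correct and follows essentially the same route as the paper: atoms satisfy the commutation hypothesis of Theorem \ref{if mu(A) commutes with everything, then it is a projection}, singletons with nonzero measure are atoms, and the countable case follows from mutual orthogonality plus countable additivity. The only (harmless) variation is that you obtain orthogonality of the singleton projections from Proposition \ref{prop:projection commutes with everything}, while the paper uses the elementary fact that projections $P,Q$ with $P+Q\leq I_\h$ satisfy $PQ=0$.
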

\begin{proof}
If $E$ is an atom for $\mu$ then for each $B\subseteq E$  in $\ox$,
either $\mu(B)=0$ or $\mu(B)=\mu(E)$  and hence, $\mu(B)$ commutes
with $\mu(E)$. Therefore Theorem \ref{if mu(A) commutes with
everything, then it is a projection} is applicable and it follows
that $\mu(E)$ is a projection. This further implies that for each
$x\in X$, since either $\mu(\{x\})=0$ or $\{x\}$ is an atom for $\mu$,
$\mu(\{x\})$ is a projection.

Finally let $x,y\in X$ be two distinct points
and set $P=\mu(\{x\})$  and $Q=\mu(\{y\})$.
 Note that
\begin{align*}
    &P+Q=\mu(\{x\})+\mu(\{y\})=\mu(\{x,y\})\leq I_\h
\end{align*}
and hence  $P\leq I_\h-Q$. Because $P$ and $Q$ are projections as proved above, it follows that $P(I_\h-Q)=P$, which in turn yields
$$PQ=0.$$
In other words, $\mu(\{x\})$ and $\mu(\{y\})$ are mutually orthogonal projections for any two distinct points $x$ and $y$. Therefore, for any at most countable subset $A=\{x_1,x_2,\ldots\}$ of $X$, the collection $\{\mu(\{x_n\})\}$ consists of projections mutually orthogonal to one another and  since
\begin{align*}
    \mu(A)=\sum_{n}\mu(\{x_n\})\quad\quad(\text{in WOT}),
\end{align*}
we conclude that $\mu(A)$ is a projection.
\end{proof}

The POVMs on finite sets  have been  natural setting for many
applications in quantum theory.  Several researchers have looked
into the convexity structure in this set up and the structure of
extreme points is very well studied. They are not always spectral
measures.  When it comes to $\cst$-convexity, it is shown in
\cite{Farenick Plosker Smith}  that only spectral measures are
$C^*$-extreme when $\h$ is finite dimensional. Here we show that it
is true in full generality.

Following the results above, we now give a characterization  of all
 atomic $C^*$-extreme points in $\px$. This in particular
characterizes all $\cst$-extreme points in $\px$ whenever $X$ is
finite.

\begin{theorem}\label{atomic $C^*$-extreme points are PVM}
An atomic normalized POVM $\mu$ on a measurable space $X$ is a $C^*$-extreme point in
$\px$ if and only if $\mu$ is  spectral. In particular, if $X$ is a
countable measurable space then any $C^*$-extreme point of $\px$ is
spectral.
\end{theorem}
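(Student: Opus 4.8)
The plan is as follows. The ``if'' direction is already Corollary~\ref{PVM is C*-extreme} (a spectral measure is $C^*$-extreme), so the real task is the converse: assuming $\mu\in\px$ is atomic and $C^*$-extreme, I want to show that $\mu(A)$ is a projection for every $A\in\ox$ (normalization being automatic from $\mu(X)=I_\h$). The strategy is to exhaust $X$, modulo a $\mu$-null set, by an at most countable family of atoms whose $\mu$-values are mutually orthogonal projections summing to $I_\h$, and then to verify the projection property on each summand.

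For the construction I would argue as follows. By Lemma~\ref{atoms are projections}, $\mu(E)$ is a projection for every atom $E$ of $\mu$, and Proposition~\ref{prop:projection commutes with everything} then gives $\mu(E\cap A)=\mu(E)\mu(A)=\mu(A)\mu(E)$ for all $A\in\ox$; in particular, for disjoint atoms $E,F$ one gets $\mu(E)\mu(F)=\mu(E\cap F)=0$, so $\mu(E)$ and $\mu(F)$ are orthogonal projections. Since $\h$ is separable, any family of pairwise orthogonal nonzero projections in $\bh$ is at most countable, so a maximal family $\{E_n\}_n$ of pairwise disjoint atoms of $\mu$ (which exists by Zorn's lemma) is at most countable. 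Maximality together with atomicity now forces $\mu\big(X\setminus\bigcup_nE_n\big)=0$: otherwise $X\setminus\bigcup_nE_n$ would contain an atom, disjoint from every $E_n$, contradicting maximality. Using countable additivity of $\mu$ in WOT together with disjointness,
\[
\sum_n\mu(E_n)=\mu\Big(\bigcup_nE_n\Big)=\mu(X)=I_\h ,
\]
so the $P_n:=\mu(E_n)$ are mutually orthogonal projections with $\sum_nP_n=I_\h$; writing $\h_n:=\ran(P_n)$ we get $\h=\bigoplus_n\h_n$, and by Proposition~\ref{prop:projection commutes with everything} each $\h_n$ reduces $\mu(A)$ for all $A\in\ox$.

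Finally, fix $A\in\ox$. Since $0\le\mu(A\setminus E_n)\le\mu(X\setminus E_n)=I_\h-P_n$, the positive operator $\mu(A\setminus E_n)$ vanishes on $\h_n$, so $\mu(A)$ and $\mu(A\cap E_n)$ agree on $\h_n$; and since $A\cap E_n\subseteq E_n$ with $E_n$ an atom, $\mu(A\cap E_n)\in\{0,P_n\}$. Hence the compression of $\mu(A)$ to each $\h_n$ is $0$ or $I_{\h_n}$, and as $\h=\bigoplus_n\h_n$ reduces $\mu(A)$, the operator $\mu(A)$ is a projection; thus $\mu$ is a normalized PVM, i.e. spectral. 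For the last assertion of the theorem it suffices to observe that every POVM on a countable measurable space is atomic --- given $A\in\ox$ with $\mu(A)\ne0$, writing $A=\{x_1,x_2,\dots\}$ and using $\mu(A)=\sum_i\mu(\{x_i\})$ in WOT, some $\mu(\{x_j\})\ne0$, and a singleton is trivially an atom --- so the equivalence just proved applies to every $C^*$-extreme point of $\px$. The step I expect to be the real obstacle is the covering/maximality argument of the middle paragraph: one must be sure that maximality of the family of disjoint atoms, used in conjunction with atomicity, really leaves only a $\mu$-null remainder. This is precisely the place where \emph{atomicity} of $\mu$ enters (as opposed to the mere existence of atoms, which Lemma~\ref{atoms are projections} alone would already supply), and it is the crux of the argument.
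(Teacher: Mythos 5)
Your proof is correct and follows essentially the same route as the paper: Lemma \ref{atoms are projections} plus Proposition \ref{prop:projection commutes with everything} to get mutually orthogonal projections over a maximal (hence countable, by separability) disjoint family of atoms, with maximality and atomicity showing the atoms exhaust $X$ up to a $\mu$-null set. The only cosmetic difference is that the paper verifies $\mu(A)=\sum_i\mu(A\cap B_i)$ directly for every $A\in\ox$ and sums orthogonal projections, while you apply the maximality argument once to $X$ and then read off the projection property blockwise on the reducing subspaces $\h_n$; both steps are sound.
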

\begin{proof}
We have seen that  spectral measures are always  $\cst$-extreme. Conversely,
assume that $\mu$ is $\cst$-extreme in $\px$. Let $\{B_i\}_{i\in I}$
be a maximal collection of mutually disjoint atoms for $\mu$, which exists thanks to
Zorn's lemma. Then  Lemma \ref{atoms are projections}  says
that $\mu(B_i)$ is a projection for each $i\in I$. Also, since $B_i$'s
are mutually disjoint, it follows from Proposition
\ref{prop:projection commutes with everything} that
$\{\mu(B_i):i\in I\}$ is a collection of  mutually orthogonal
projections. Hence, as $\h$ is separable, we conclude that $I$ is
countable. Further for any $A\in\ox$, we have
\begin{align}\label{eq:expression of mu(A) in terms of B_i}
    \mu(A)=\sum_{i\in I}\mu(A\cap B_i),
\end{align}
otherwise, we would get  $\mu(A\setminus(\cup_{i}(A\cap B_i)))\neq0$ and since $\mu$ is atomic, there is an atom, say $A_1\subseteq
A\setminus(\cup_{i}(A\cap B_i))$ for $\mu$. But then $\{B_i\}_{i\in
I}\cup\{A_1\}$ is a  collection of mutually disjoint atoms for $\mu$, which
violates the maximality of the collection $\{B_i\}_{i\in I}$. Similarly note from Lemma \ref{atoms are projections} that for
any $A\in \ox$, since either $\mu(A\cap B_i)=0$ or $A\cap B_i$ is an atom for $\mu$,
the collection $\{\mu(A\cap B_i)\}_{i\in I}$ consists of mutually orthogonal
projections. Consequently it follows from equation
\eqref{eq:expression of mu(A) in terms of B_i}, that $\mu(A)$ is a
projection. This proves that $\mu$ is  spectral. Since any POVM on a
countable measurable space is atomic, the second assertion follows.
\end{proof}

Since all spectral measures are also extreme (in the usual sense),
we have the following corollary. Note that the same is always true
for general measurable spaces, whenever $\h$ is a finite dimensional
Hilbert space (\cite{Farenick Plosker Smith}).

\begin{corollary}\label{C*-extrme implies extreme for countable stes}
If $X$ is a countable (in particular, finite) measurable space, then every $C^*$-extreme point in $\px$ is  extreme.
\end{corollary}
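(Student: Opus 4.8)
The plan is to obtain this as an immediate consequence of two facts already established. First I would recall, exactly as in the proof of Theorem~\ref{atomic $C^*$-extreme points are PVM}, that every POVM on a countable measurable space $X$ is automatically atomic: if $A\in\ox$ has $\mu(A)\neq 0$, then some singleton $\{x\}\subseteq A$ must satisfy $\mu(\{x\})\neq 0$, for otherwise countable additivity would force $\mu(A)=\sum_{x\in A}\mu(\{x\})=0$; and every singleton is trivially an atom. Consequently, if $\mu$ is $C^*$-extreme in $\px$ then Theorem~\ref{atomic $C^*$-extreme points are PVM} applies and shows that $\mu$ is a spectral measure.

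Second, I would invoke the already-noted fact that every spectral measure is extreme in $\px$ (the corollary to the Extreme Point Criterion, Theorem~\ref{thm:extrme point criterion for POVM}; alternatively, for a spectral measure the minimal Naimark dilation may be taken as $(\mu,I_\h,\h)$, and the map $D\mapsto I_\h^\ast D I_\h=D$ on $\mu(\ox)'$ is trivially injective, so the criterion is met). Chaining the two steps gives: $C^*$-extreme in $\px$ $\implies$ spectral $\implies$ extreme in $\px$, which is precisely the assertion. The parenthetical ``in particular, finite'' case needs no separate treatment, since finite spaces are countable.

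There is essentially no obstacle in this argument; the substantive work has all been carried out in Theorem~\ref{atomic $C^*$-extreme points are PVM}. The only point requiring a word of care is making explicit the observation (already used implicitly there) that countability of $X$ forces every normalized POVM to be atomic, so that the hypothesis of that theorem is satisfied automatically here.
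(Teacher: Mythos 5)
Your argument is correct and coincides with the paper's: Theorem~\ref{atomic $C^*$-extreme points are PVM} gives that a $C^*$-extreme point on a countable space is spectral (every POVM there being atomic), and the corollary to Theorem~\ref{thm:extrme point criterion for POVM} gives that spectral measures are extreme. This is exactly the chain of implications the paper uses.
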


\subsection{The case of finite dimensional Hilbert space}
We end this section by recording the case of finite dimensional
Hilbert spaces and general measurable spaces. This set up has
been widely studied by several researchers. We recall that it is proved
in \cite{Farenick Plosker Smith} for a compact Hausdorff space $X$
and a finite dimensional $\h$, that every $\cst$-extreme point in $\px$ is
spectral. We extend this result to full generality using Theorem
\ref{atomic $C^*$-extreme points are PVM}.

\begin{theorem}\label{POVMs on finite dimensional spaces are spectral}
Let $\h$ be a finite dimensional Hilbert space and $X$ a measurable space. Then any $\cst$-extreme point in $\px$ is spectral.
\end{theorem}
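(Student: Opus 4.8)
The plan is to reduce the finite-dimensional case to the atomic case already settled in Theorem~\ref{atomic $C^*$-extreme points are PVM}. The key observation is that when $\h$ is finite dimensional, every normalized POVM on $X$ is automatically atomic, so no new work on the structure of $C^*$-extreme points is needed --- only a measure-theoretic fact about $\bh$-valued POVMs when $\dim\h<\infty$.

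First I would show that a POVM $\mu:\ox\to\bh$ with $\dim\h<\infty$ has no non-atomic part. Suppose toward a contradiction that $\mu$ has a non-atomic part $\mu_{na}\neq 0$ (via Theorem~\ref{thm:every povm decomposes as a sum of atomic and non atomic povm}), concentrated on some $E_0\in\ox$ with $\mu(E_0)\neq 0$ and $E_0$ containing no atom. Fix a unit vector $h\in\h$ with $\langle h,\mu(E_0)h\rangle>0$ and consider the finite positive scalar measure $\mu_{h,h}$ restricted to $E_0$; since $E_0$ has no atom for $\mu$, and $\mu(B)=0$ iff $\mu_{h',h'}(B)=0$ for all $h'$ (using finite-dimensionality of $\h$, $\mu(B)=0$ is equivalent to $\operatorname{tr}(\mu(B))=0$, a genuine finite positive measure), one can keep splitting $E_0$ to produce, for each $n$, a measurable partition of $E_0$ into $2^n$ pieces each with nonzero $\mu$-value. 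Running this against the $\operatorname{tr}\circ\mu$ measure, which is finite, I get pieces of arbitrarily small trace but nonzero value --- and then summing countably many such disjoint pieces forces a contradiction with countable additivity in $\bh$ (a finite-dimensional space), since infinitely many mutually ``independent'' nonzero positive operators cannot sum to a bounded operator when their traces are summable yet each is bounded below away from $0$ on its support. The cleanest route is: non-atomicity of the finite measure $\operatorname{tr}\circ\mu|_{E_0}$ would follow, but then by the classical fact that a non-atomic finite positive measure takes all values in $[0,\mu(E_0)\text{-trace}]$, I can find a decreasing sequence $E_0\supseteq F_1\supseteq F_2\supseteq\cdots$ with $\operatorname{tr}(\mu(F_n))\to 0$, hence $\mu(F_n)\to 0$, while on the other hand any atom-free behavior propagates down --- the point being that in finite dimensions ``$\mu$ non-atomic and nonzero'' is impossible because it would make $\operatorname{tr}\circ\mu$ a non-atomic scalar measure, and one then extracts a genuine contradiction by slicing into a countable disjoint family and noting the operator series cannot converge in norm while each term has operator norm bounded below on its range.

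Granting the lemma that $\mu$ is atomic, the theorem is immediate: if $\mu$ is $C^*$-extreme in $\px$, then by Theorem~\ref{atomic $C^*$-extreme points are PVM} (the ``atomic'' direction) $\mu$ is spectral. The hard part is the first step --- making rigorous the passage from non-atomicity of the operator-valued $\mu$ to a contradiction via countable additivity. I expect the slickest formulation is to note that $\mu(B)=0 \iff \operatorname{tr}(\mu(B))=0$ (valid precisely because $\dim\h<\infty$, since each $\mu(B)\geq 0$), so that atoms of $\mu$ coincide with atoms of the finite positive measure $\lambda:=\operatorname{tr}\circ\mu$; a finite scalar measure is always a sum of an atomic and a non-atomic part, the atomic part being a countable sum of point masses. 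If $\lambda$ had a non-atomic part on some set $E_0$, there would be no minimal nonzero $\mu$-value set inside $E_0$, producing an infinite strictly decreasing chain of measurable sets $E_0\supsetneq E_1\supsetneq\cdots$ with all $\mu(E_n\setminus E_{n+1})\neq 0$; the disjoint sets $E_n\setminus E_{n+1}$ then violate countable additivity of $\mu$ combined with the fact that $\lambda$ is finite forces $\lambda(E_n\setminus E_{n+1})\to 0$ yet a non-atomic measure cannot be exhausted this way --- more carefully, one invokes that a finite non-atomic measure restricted to $E_0$ is \emph{infinitely divisible} and hence cannot be atomic, contradicting that $\mu$ (equivalently $\lambda$) would have to have \emph{all} its mass in the atomic part once $\h$ is finite dimensional. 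I would phrase this final lemma as: \emph{every $\bh$-valued POVM with $\dim\h<\infty$ is atomic}, prove it by the $\operatorname{tr}\circ\mu$ reduction, and then the theorem follows in one line from Theorem~\ref{atomic $C^*$-extreme points are PVM}.
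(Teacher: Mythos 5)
Your overall strategy --- reduce to Theorem \ref{atomic $C^*$-extreme points are PVM} by showing the POVM is atomic --- is the same as the paper's, but your route to atomicity contains a fatal error. The lemma you propose, that \emph{every} $\bh$-valued POVM with $\dim\h<\infty$ is atomic, is simply false. Take any non-atomic probability measure $\lambda$ (say Lebesgue measure on $[0,1]$) and set $\mu(A)=\lambda(A)I_\h$; this is a normalized POVM into a finite-dimensional (even one-dimensional) $\bh$ with no atoms. Your trace reduction is fine as far as it goes: $\mu(B)=0$ iff $\operatorname{tr}(\mu(B))=0$, so atoms of $\mu$ coincide with atoms of $\operatorname{tr}\circ\mu$. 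But nothing forces the finite scalar measure $\operatorname{tr}\circ\mu$ to be atomic, and the various contradiction sketches (countable additivity failing for operator series, norms ``bounded below on the support,'' infinite divisibility) do not produce a contradiction --- in the example above every such decreasing or disjoint family behaves exactly as it does for Lebesgue measure, with no conflict with countable additivity in $\bh$. The statement you want cannot be rescued, so the second half of your argument has nothing to stand on.

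The missing ingredient is that atomicity must come from the extremality hypothesis, not from finite dimensionality of $\h$ alone. The paper's proof first invokes the fact (Proposition 2.1 of \cite{Farenick Plosker Smith}) that when $\dim\h<\infty$ every $\cst$-extreme point of $\px$ is an extreme point in the classical sense; then, for an extreme $\mu$ with minimal Naimark dilation $(\pi,V,\hpi)$, the extreme point criterion (Theorem \ref{thm:extrme point criterion for POVM}) says $D\mapsto V^*DV$ is injective on $\pi(\ox)'$, which forces $\pi(\ox)'$, and hence $\hpi=[\pi(\ox)V\h]$, to be finite dimensional; a commuting family of projections on a finite-dimensional space is finite, so $\pi$ (and by Proposition \ref{mu is atomic iff pi is atomic}, $\mu$) is atomic, and only then does Theorem \ref{atomic $C^*$-extreme points are PVM} finish the proof. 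Your proposal skips precisely this use of extremality, and the non-atomic example shows it cannot be skipped.
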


\begin{proof}
Firstly, finite dimensionality of $\h$ ensures that every
$\cst$-extreme point in $\px$ is also extreme (Proposition 2.1,
\cite{Farenick Plosker Smith}). Now we show that every extreme point
in $\px$ is atomic (see Lemma 2, \cite{Chiribella Ariano Schlingemann} for
topological spaces) as follows: if $\mu$ is extreme in $\px$ and
$(\pi,V,\hpi)$ is the minimal Naimark dilation for $\mu$, then the
map $$D\mapsto V^*DV$$ from $\pi(\ox)'$ to $\bh$ is one-to-one by
Theorem \ref{thm:extrme point criterion for POVM}.
Since $\h$
is finite dimensional, $\bh$ is a finite dimensional algebra and hence $\pi(\ox)'$ is a finite-dimensional algebra.
Therefore, since $\pi(\ox)\subseteq\pi(\ox)'$ and $\hpi=[\piox
V\h]$, it follows that $\hpi$ is also finite-dimensional.

Consequently  $\{\pi (A): A\in \ox \}$ is a commuting family of
projections on a finite dimensional Hilbert space $\hpi $ and hence
it is a finite set. This implies that $\pi $ is atomic. Then by
Proposition \ref{mu is atomic iff pi is atomic},  $\mu$ is also
atomic. Thus we have shown that every $\cst$-extreme point in $\px$
is atomic. The proof is complete in view of Theorem \ref{atomic
$C^*$-extreme points are PVM}.
\end{proof}

\begin{remark}
In the theorem above, we noticed that any spectral measure acting on a finite dimensional Hilbert space is atomic.
\end{remark}

\section{Mutually Singular POVMs}\label{mutually singular povm}

\subsection{Mutual singularity}
The notion of mutual singularity of positive measures is very
familiar from classical measure theory. We consider the similar
notion of mutually singular POVMs.  Our main aim here is to discuss
the behaviour of $\cst$-extremity for direct sums of mutually
singular POVMs. This helps us in  characterization of $\cst$-extreme
points, as we show that every $\cst$-extreme POVM can be  decomposed
into a direct sum of an atomic and a non-atomic normalized POVM.

\begin{definition}\label{definition of mutual singularity}
Let $\h _1, \h _2$ be  Hilbert spaces and $X$ a measurable space. Two POVMs
$\mu_i:\ox\to\mathcal{B}(\h_i)$, $i=1,2,$  are called \emph{mutually
singular}, denoted $\mu_1\perp\mu_2,$ if there exist disjoint
measurable subsets $X_1$ and $X_2$ of $X$ such that $\mu_i(
A)=\mu_i( A\cap X_i)$ for all $ A\in\ox$.
\end{definition}

The following proposition is a direct consequence of the classical case. It is a well-known fact that an atomic finite positive measure is always
mutually singular to a non-atomic  positive measure. We use it below.

\begin{proposition}\label{atomic and non atomic povms are mutually singular}
Let $\mu_i:\ox\to\B(\h_i)$, $i=1,2$ be two POVMs such that $\mu_1$
is atomic and $\mu_2$ is non-atomic. Then they are mutually
singular.
\end{proposition}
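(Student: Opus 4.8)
The plan is to reduce the operator-valued statement to the classical scalar-valued fact about atomic versus non-atomic positive measures, which is quoted just before the proposition. The natural scalar measures to use are the ones produced by a Naimark dilation together with a vector state, or more directly the trace-like measures $A \mapsto \langle h, \mu_i(A) h\rangle$ for $h$ ranging over a countable dense set. First I would fix minimal Naimark dilations $(\pi_1, V_1, \mathcal{H}_{\pi_1})$ and $(\pi_2, V_2, \mathcal{H}_{\pi_2})$ for $\mu_1$ and $\mu_2$ respectively. By Proposition \ref{mu is atomic iff pi is atomic}, $\pi_1$ is atomic and $\pi_2$ is non-atomic. Since each $\mathcal{H}_{\pi_i}$ is separable, pick countable total sets $\{\xi_n\}_n \subseteq \mathcal{H}_{\pi_1}$ and $\{\eta_m\}_m \subseteq \mathcal{H}_{\pi_2}$ and form the finite positive scalar measures $\lambda_1 = \sum_n 2^{-n}\langle \xi_n, \pi_1(\cdot)\xi_n\rangle$ and $\lambda_2 = \sum_m 2^{-m}\langle \eta_m, \pi_2(\cdot)\eta_m\rangle$. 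Then by Proposition \ref{zero sets of mu and pi are same} (applied through the totality of the $\xi_n$, $\eta_m$), for $A \in \ox$ we have $\lambda_i(A) = 0$ if and only if $\pi_i(A) = 0$ if and only if $\mu_i(A) = 0$, so each $\lambda_i$ has exactly the same null sets as $\mu_i$, and moreover $A$ is an atom for $\lambda_i$ precisely when it is an atom for $\mu_i$ (again via Proposition \ref{zero sets of mu and pi are same} applied to subsets of $A$). Hence $\lambda_1$ is an atomic finite positive measure and $\lambda_2$ is a non-atomic finite positive measure.

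The next step invokes the classical fact: an atomic finite positive measure and a non-atomic finite positive measure on the same measurable space are mutually singular. Concretely, let $\{a_k\}_{k}$ be a (necessarily at most countable, since $\lambda_1$ is finite) maximal family of pairwise disjoint atoms of $\lambda_1$, and set $X_1 = \bigcup_k a_k$. Maximality forces $\lambda_1(X \setminus X_1) = 0$, because any positive-measure subset of $X \setminus X_1$ would contain a further atom of the atomic measure $\lambda_1$, contradicting maximality; so $\lambda_1$ is concentrated on $X_1$. On the other hand each $a_k$ is an atom of $\lambda_1$, hence also an atom of $\pi_1$ and of $\mu_1$, but $\lambda_2$ is non-atomic, so $\lambda_2(a_k) = 0$ for every $k$; wait — that inference needs care, since an atom of $\lambda_1$ need not a priori be an atom of $\lambda_2$. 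The correct argument is: since $\lambda_2$ is non-atomic and $a_k$ has... hmm, this is exactly where one must be slightly more careful, so I will instead argue that each $a_k$, being an atom of $\lambda_1$, can be shrunk inside $\lambda_2$: if $\lambda_2(a_k) > 0$, non-atomicity of $\lambda_2$ gives $B \subseteq a_k$ with $0 < \lambda_2(B) < \lambda_2(a_k)$; but $a_k$ being a $\lambda_1$-atom means $\lambda_1(B) \in \{0, \lambda_1(a_k)\}$. Replacing $a_k$ by $B$ or $a_k \setminus B$ — whichever still has full $\lambda_1$-measure — and iterating, one produces a decreasing sequence exhibiting that $\lambda_2$ assigns arbitrarily small positive values on sets of full $\lambda_1(a_k)$-measure; a standard exhaustion then yields a subset $a_k' \subseteq a_k$ with $\lambda_1(a_k') = \lambda_1(a_k)$ and $\lambda_2(a_k') = 0$. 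Taking the union $X_1' = \bigcup_k a_k'$ gives $\lambda_1(X \setminus X_1') = 0$ and $\lambda_2(X_1') = 0$; put $X_2 = X \setminus X_1'$, so $\lambda_1$ is concentrated on $X_1'$ and $\lambda_2$ is concentrated on $X_2$, with $X_1' \cap X_2 = \emptyset$.

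Finally I transfer this back to the POVMs. Since $\mu_1$ and $\lambda_1$ have the same null sets, $\lambda_1(X \setminus X_1') = 0$ forces $\mu_1(X \setminus X_1') = 0$, i.e.\ $\mu_1$ is concentrated on $X_1'$, which by definition means $\mu_1(A) = \mu_1(A \cap X_1')$ for all $A \in \ox$. Likewise $\mu_2$ is concentrated on $X_2$. As $X_1'$ and $X_2$ are disjoint measurable subsets, the definition of mutual singularity is satisfied, so $\mu_1 \perp \mu_2$.

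\textbf{Main obstacle.} The genuinely non-routine point is the classical step of separating an atomic from a non-atomic finite measure — specifically, carving each $\lambda_1$-atom $a_k$ down to a subset of full $\lambda_1$-mass but zero $\lambda_2$-mass using only non-atomicity of $\lambda_2$. Everything else (passing from $\mu_i$ to $\pi_i$ to a scalar measure $\lambda_i$ via separability, and the preservation of null sets and atoms under these passages) is bookkeeping that the already-proved Propositions \ref{zero sets of mu and pi are same} and \ref{mu is atomic iff pi is atomic} make routine. In the write-up I would likely just cite the classical mutual-singularity fact (as the paper already signals it intends to) and spend the bulk of the argument on the reduction to scalar measures.
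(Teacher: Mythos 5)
Your proof is correct and follows essentially the same route as the paper: scalarize each $\mu_i$ by a positive measure with exactly the same null sets, observe that atomicity/non-atomicity transfers, invoke the classical fact that an atomic finite measure and a non-atomic measure are mutually singular (Johnson), and pull the separating sets back to the POVMs. The only difference is cosmetic: the paper skips the Naimark dilation and obtains $\lambda_i$ directly as $A\mapsto \operatorname{tr}(\mu_i(A)S_i)$ for a faithful normal state on $\B(\h_i)$, which is exactly your countable sum of vector states applied to $\mu_i$ itself rather than to $\pi_i$.
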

\begin{proof}
Consider strictly positive density operators  $S_i$ on $\h_i $ such
that $T\mapsto  \tr (S_iT)$  ($\tr$ denotes trace) are faithful
normal states on $\B (\h_i)$ for $i=1,2.$  Then
$\lambda_i:\ox\to[0,\infty)$ defined by
\begin{align*}
    \lambda_i(A)=\tr(\mu_i(A)S_i)~~~\text{for all }A\in \ox,
\end{align*}
are positive measures which, for any $A\in\ox$ satisfy
\begin{align}\label{eq:lambda and mu has same zero measure sets}
    \mu_i(A)=0 \text{ if and only if }\lambda_i(A)=0.
\end{align}
This in particular implies that $\lambda_1$ is atomic and $\lambda_2$ is non-atomic. Therefore, as mentioned above,
 $\lambda_1$ is mutually singular to
$\lambda_2$ (Theorem
2.5, \cite{Johnson}). This in turn implies due to  \eqref{eq:lambda and mu
has same zero measure sets} that $\mu_1$ is mutually singular to
$\mu_2$.
\end{proof}

\subsection{Disjoint spectral measures}
Inspired by the notion of disjointness for representations of
$\cst$-algebras (see \cite{Arveson1},\cite{Arveson2}), we introduce
a similar notion for spectral measures. We do not know whether this concept has
been studied before.  We establish here that singularity and
disjointness of spectral measures are in fact same.

Let $\pi:\ox\to\mathcal{B}(\hpi)$ be a spectral measure and let $\h$ be
  a closed subspace of $\hpi$ such that $\h$ is invariant
  (and hence reducing) under $\pi( A)$ for all $ A\in\ox$. Then  the mapping
   $ A\mapsto\pi( A)_{|_{\h}}$ gives rise to another spectral measure from $\ox$  to
    $\bh$, and is called a \emph{sub-spectral measure} of $\pi$.

\begin{definition}
Two spectral measures $\pi_i:\ox\to\mathcal{B}(\h_{\pi_i})$, $i=1,2$ are called \emph{disjoint} if no
non-zero sub-spectral measure of $\pi_1$ is unitarily equivalent  to any sub-spectral measure of $\pi_2$.
\end{definition}

Let $\lambda:\ox\to [0,\infty]$ be a $\sigma$-finite measure such that $L^2(\lambda)$ is a separable Hilbert space.  Consider the map  $\pi^\lambda:\ox\to\mathcal{B}(L^2(\lambda))$  defined by
\begin{align}
    \pi^\lambda(A)=M_{\chi_A}~~\text{for all }A\in\ox,
\end{align}
where $M_{\chi_A}$ is the multiplication operator by the characteristic function $\chi_A$.
It is straightforward to verify that $\pi^\lambda$ is a spectral
measure. Also $\pi^\lambda(A)=0$ if and only if $\lambda(A)=0$ for
any $A\in\ox$. Such spectral measures are known as canonical
spectral measures.

We first prove that the notion of singularity and
disjointness are same in the case of canonical spectral measures. The proof here follows the same
technique which is  usually employed for representations (see Theorem 2.2.2,
\cite{Arveson2}).

\begin{lemma}\label{singulaity and disjointness of pi lambda}
 Let $\lambda_1$ and $\lambda_2$ be two $\sigma$-finite positive measures on $X$. Then $\lambda_1$ is
 mutually  singular  to $\lambda_2$ if and only if $\pi^{\lambda_1}$ and $\pi^{\lambda_2}$ are disjoint.
\end{lemma}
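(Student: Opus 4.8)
The statement to prove is: for $\sigma$-finite positive measures $\lambda_1,\lambda_2$, one has $\lambda_1\perp\lambda_2$ iff $\pi^{\lambda_1}$ and $\pi^{\lambda_2}$ are disjoint. I would prove the two directions separately, and the bulk of the work is in the ``only if'' direction.

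For the ``if'' direction I would argue by contraposition: suppose $\lambda_1$ is \emph{not} mutually singular to $\lambda_2$. By the Lebesgue decomposition (valid for $\sigma$-finite measures) write $\lambda_1=\lambda_1^{ac}+\lambda_1^{s}$ with $\lambda_1^{ac}\ll\lambda_2$ and $\lambda_1^{s}\perp\lambda_2$; since $\lambda_1\not\perp\lambda_2$ the absolutely continuous part $\lambda_1^{ac}$ is nonzero, so there is a measurable set $E$ with $\lambda_1(E)>0$ and $\lambda_2(E)>0$ on which $\lambda_1\ll\lambda_2$ and (by symmetry, shrinking $E$ and using Radon--Nikod\'ym on both sides) $\lambda_1$ and $\lambda_2$ are mutually absolutely continuous on $E$. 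Then $L^2(\lambda_1\!\restriction_E)$ and $L^2(\lambda_2\!\restriction_E)$ carry the sub-spectral measures $A\mapsto M_{\chi_A}$ obtained by restricting $\pi^{\lambda_1},\pi^{\lambda_2}$ to the reducing subspaces $\chi_E L^2(\lambda_i)$, and the map $g\mapsto g\,(d\lambda_1/d\lambda_2)^{1/2}$ is a unitary $L^2(\lambda_2\!\restriction_E)\to L^2(\lambda_1\!\restriction_E)$ intertwining the two multiplication actions. This exhibits nonzero unitarily equivalent sub-spectral measures, contradicting disjointness.

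For the ``only if'' direction, assume $\lambda_1\perp\lambda_2$, so there are disjoint sets $X_1,X_2$ with $\lambda_i$ concentrated on $X_i$; replacing $X_2$ by $X\setminus X_1$ we may take $X_2=X\setminus X_1$. Suppose for contradiction that there is a nonzero sub-spectral measure $\sigma_1$ of $\pi^{\lambda_1}$ on a reducing subspace $\h_1\subseteq L^2(\lambda_1)$ unitarily equivalent to a sub-spectral measure $\sigma_2$ of $\pi^{\lambda_2}$ on $\h_2\subseteq L^2(\lambda_2)$, via a unitary $W:\h_1\to\h_2$ with $W\sigma_1(A)=\sigma_2(A)W$ for all $A\in\ox$. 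Pick $0\neq f\in\h_1$; then $\sigma_1(X_1)f=M_{\chi_{X_1}}f=f$ because $f$ is supported on $X_1$ (as $\lambda_1$ lives on $X_1$), so $f=\sigma_1(X_1)f$, hence $Wf=W\sigma_1(X_1)f=\sigma_2(X_1)Wf=M_{\chi_{X_1}}(Wf)$. But $Wf\in L^2(\lambda_2)$ and $\lambda_2(X_1)=0$, so $M_{\chi_{X_1}}(Wf)=0$, forcing $Wf=0$, which contradicts $W$ being a unitary and $f\neq 0$. Hence no such $\sigma_1,\sigma_2$ exist and $\pi^{\lambda_1}\perp\pi^{\lambda_2}$.

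The main obstacle is the ``if'' direction: one must produce genuinely \emph{unitarily equivalent nonzero} sub-spectral measures from mere failure of singularity, and the clean way to do this is to first localize (via Lebesgue decomposition) to a common set $E$ on which $\lambda_1$ and $\lambda_2$ are mutually absolutely continuous, and then write down the explicit Radon--Nikod\'ym-square-root unitary $U_Eg=g\,(d\lambda_1/d\lambda_2)^{1/2}$ and check it intertwines the restricted multiplication operators; the $\sigma$-finiteness hypothesis is exactly what makes Lebesgue decomposition and these Radon--Nikod\'ym derivatives available. The ``only if'' direction, by contrast, is a short support argument as above.
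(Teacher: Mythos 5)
Your proof is correct and takes essentially the same route as the paper: the failure of singularity plus Lebesgue decomposition produces, via a Radon--Nikod\'ym square-root unitary, nonzero unitarily equivalent sub-spectral measures (the paper routes this through a common measure $\lambda\ll\lambda_1,\lambda_2$ rather than a common set $E$, but the mechanism is identical), and your converse is the same support/intertwiner argument that the paper defers to the implications $(1)\Rightarrow(3)\Rightarrow(2)$ of the subsequent theorem. One cosmetic slip: as written, $g\mapsto g\,(d\lambda_1/d\lambda_2)^{1/2}$ is a unitary from $L^2(\lambda_1|_E)$ onto $L^2(\lambda_2|_E)$ (in your direction you want the factor $(d\lambda_2/d\lambda_1)^{1/2}$), which is harmless since the two restrictions are mutually absolutely continuous on $E$.
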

\begin{proof}
 Let $\pi^{\lambda_1}$ and $\pi^{\lambda_2}$ be disjoint spectral measures. Assume to the contrary that $\lambda_1$ and $\lambda_2$ are not mutually singular.
Then by Lebesgue decomposition theorem, there is a non-zero $\sigma$-finite positive
measure, say $\lambda$, such that $\lambda$ is absolutely continuous with respect
to both $\lambda_1$ and $\lambda_2$. Using Radon-Nikodym derivative $\frac{d\lambda}{d\lambda_i}$ of $\lambda$ with respect to $\lambda_i$,
it is not hard to see that
$\pi^\lambda$ is unitarily equivalent to $\pi^{\lambda_i}(\cdot)_{|_{\K_i}}$, where $K_i=\ran(\pi^{\lambda_i}(C_i))$ and $C_i=\{x\in X; \frac{d\lambda}{d\lambda_i}(x)>0\}$. It is clear that since $\lambda_i(C_i)\neq 0$,  we have $K_i\neq0$ which  contradicts
 disjointness of $\pi^{\lambda_1}$ and $\pi^{\lambda_2}$. The proof of the converse is contained in the
next theorem.
\end{proof}

We  use  the familiar notion of direct sum in the next theorem and in
subsequent results. The {\em direct sum} of a collection
$\{\mu_i:\ox\to\B(\h_i)\}_{i\in I}$ of POVMs is  the map
$\oplus_i\mu_i:\ox\to \B(\oplus_i\h_i)$ defined by
\begin{align}
    (\oplus_i\mu_i)(A)=\oplus_i\mu_i(A)~~\text{for all }A\in\ox.
\end{align}
It is immediate that  $\oplus_i\mu_i$ is a POVM.
Further it  is normalized if and only if each $\mu_i$
 is normalized. Also $\oplus_i\mu_i$ is a spectral measure if and only if each $\mu_i$ is a spectral measure.
Similar to an equivalent criterion for disjointness of
representations (Proposition 2.1.4, \cite{Arveson2}), we have the
following result for spectral measures. This also shows that the
notions of singularity and disjointness are same.

\begin{theorem}\label{prop:equiavelnt criterion for disjointness}
Let $\pi_i:\ox\to\mathcal{B}(\h_{\pi_i}), i=1,2$ be two  spectral measures. Then the following are equivalent:
\begin{enumerate}
\item $\pi_1$ and $\pi_2$ are mutually singular.
    \item $\pi_1$ is disjoint to $\pi_2$.
\item If for  $T\in\mathcal{B}(\h_{\pi_1},\h_{\pi_2})$,  $T\pi_1(A)=\pi_2(A)T$ for all $A\in\ox$, then $T=0$.
\end{enumerate}
\end{theorem}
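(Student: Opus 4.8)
The plan is to prove the cycle $(1)\Rightarrow(3)\Rightarrow(2)\Rightarrow(1)$, since $(3)$ is the most flexible "intertwiner" condition to work with.

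For $(1)\Rightarrow(3)$: suppose $X_1,X_2$ are disjoint measurable sets with $\pi_i(A)=\pi_i(A\cap X_i)$. In particular $\pi_1(X_1)=I_{\h_{\pi_1}}$ and $\pi_2(X_1)=\pi_2(X_1\cap X_2)=\pi_2(\emptyset)=0$. Given an intertwiner $T$ with $T\pi_1(A)=\pi_2(A)T$ for all $A$, take $A=X_1$: then $T=T\pi_1(X_1)=\pi_2(X_1)T=0$. This is immediate.

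For $(3)\Rightarrow(2)$: I argue by contraposition. If $\pi_1$ and $\pi_2$ are not disjoint, there are nonzero reducing subspaces $\K_i\subseteq\h_{\pi_i}$ and a unitary $W\colon\K_1\to\K_2$ with $W\pi_1(A)_{|_{\K_1}}=\pi_2(A)_{|_{\K_2}}W$ for all $A$. Extend $W$ to an operator $T\colon\h_{\pi_1}\to\h_{\pi_2}$ by setting it to be $0$ on $\h_{\pi_1}\ominus\K_1$; since each $\K_i$ is reducing for the respective spectral measure, one checks directly that $T\pi_1(A)=\pi_2(A)T$ for all $A\in\ox$, and $T\neq0$, contradicting $(3)$.

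For $(2)\Rightarrow(1)$: this is the substantive step and where I expect the real work to lie. The idea is to compare $\pi_i$ with canonical spectral measures and invoke Lemma \ref{singulaity and disjointness of pi lambda}. Choose (using separability of $\h_{\pi_i}$) a $\sigma$-finite positive measure $\lambda_i$ such that $\pi_i$ is unitarily equivalent to a sub-spectral measure of a direct sum of copies of $\pi^{\lambda_i}$, and more importantly with $\pi_i(A)=0\iff\lambda_i(A)=0$ (one can take $\lambda_i(A)=\sum_n 2^{-n}\langle e_n,\pi_i(A)e_n\rangle$ for an orthonormal basis $\{e_n\}$; a spectral measure on a separable Hilbert space is a finite direct sum / countable multiplicity-controlled family of canonical ones). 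If $\pi_1$ and $\pi_2$ were not mutually singular, then $\lambda_1$ and $\lambda_2$ are not mutually singular either, so by the Lebesgue decomposition there is a nonzero $\sigma$-finite $\lambda\ll\lambda_i$ for $i=1,2$. As in the proof of Lemma \ref{singulaity and disjointness of pi lambda}, $\pi^\lambda$ embeds as a nonzero sub-spectral measure of $\pi^{\lambda_i}$, hence (via the unitary equivalences) as a nonzero sub-spectral measure of $\pi_i$, for both $i=1,2$; these two copies of $\pi^\lambda$ are unitarily equivalent to each other, contradicting $(2)$. The main obstacle is setting up cleanly the reduction of an abstract separable spectral measure to canonical form and tracking that the zero sets are preserved, so that "not mutually singular" genuinely transfers from the $\pi_i$ to the $\lambda_i$; once that bookkeeping is in place the rest follows from the lemma. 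This also completes the converse left open in Lemma \ref{singulaity and disjointness of pi lambda}.
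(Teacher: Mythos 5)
Your cycle $(1)\Rightarrow(3)\Rightarrow(2)\Rightarrow(1)$ is exactly the paper's, and your arguments for $(1)\Rightarrow(3)$ and $(3)\Rightarrow(2)$ coincide with the paper's almost verbatim; both are fine. The issue is in $(2)\Rightarrow(1)$, where you run a contrapositive through a single scalar measure $\lambda_i$ with the same null sets as $\pi_i$. The step you flag as the "main obstacle" is a genuine gap, and moreover the equivalence you posit points the wrong way for your deduction: you assume $\pi_i$ is unitarily equivalent to a sub-spectral measure of a direct sum of copies of $\pi^{\lambda_i}$, but from $\pi^\lambda$ being a sub-spectral measure of $\pi^{\lambda_i}$ together with that containment one cannot conclude "hence a sub-spectral measure of $\pi_i$". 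What your argument actually needs is the reverse embedding: that $\pi^{\lambda_i}$ (equivalently, $\pi^\lambda$ for every nonzero $\sigma$-finite $\lambda\ll\lambda_i$) is unitarily equivalent to a sub-spectral measure of $\pi_i$ whenever $\lambda_i$ and $\pi_i$ have the same null sets. That fact is true, but it is precisely the multiplicity-theoretic content that has to be supplied: one must decompose $\pi_i$ into canonical pieces and track how the measure classes sit inside it, which is not a formal consequence of the two properties of $\lambda_i$ you listed.

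The paper closes exactly this gap by invoking the Hahn--Hellinger theorem: up to unitary equivalence $\pi_i=\bigoplus_{n}n\cdot\pi^{\lambda^i_n}$ with the $\lambda^i_n$ mutually singular. Disjointness of $\pi_1,\pi_2$ then forces each $\pi^{\lambda^1_n}$ to be disjoint from each $\pi^{\lambda^2_m}$, Lemma \ref{singulaity and disjointness of pi lambda} converts this to mutual singularity of $\lambda^1_n$ and $\lambda^2_m$, and an explicit union/intersection bookkeeping produces disjoint carriers $X^1,X^2$, giving $(2)\Rightarrow(1)$ directly rather than by contraposition. Your route can be completed the same way: once you use Hahn--Hellinger (or a cyclic-subspace decomposition plus iterated Lebesgue decompositions) to prove that $\pi^{\lambda_i}$ embeds as a sub-spectral measure of $\pi_i$, the rest of your contrapositive argument (common absolutely continuous part $\lambda$, $\pi^\lambda$ embedding into both $\pi_1$ and $\pi_2$, contradicting disjointness) is correct and slightly lighter on set bookkeeping than the paper's version; but as written the key embedding is asserted, not proved.
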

\begin{proof}
$(1)\implies(3)$: Let $C_1$ and $C_2$ be disjoint measurable subsets such that
$\pi_i(A)=\pi_i(A\cap C_i)$ for all $A\in \ox$ and $i=1,2$. Let $T\in \B(\h_{\pi_1},\h_{\pi_2})$ be such that $T\pi_1(A)=\pi_2(A)T$ for all $A\in \ox$. Then, since $\pi_1(C_1)=I_{\h_{\pi_1}}$ and
$\pi_2(C_1)=0$, it follows that
\begin{align*}
    T=T\pi_1(C_1)=\pi_2(C_1)T=0.
\end{align*}

 $(3)\implies(2)$: if $\pi_1$ and $\pi_2$ are not disjoint, then there are non-zero closed subspaces $\K_i$ of $\h_{\pi_i}$ invariant under $\pi_i(A)$ for all $A\in\ox$, and a unitary $U:\K_1\to\K_2$ such that
 $$U\pi_1(A)_{|_{\K_1}}=\pi_2(A)_{|_{\K_2}}U~~\mbox{ for all }A\in\ox.$$
 Extend $U$ to $\h_{\pi_1}$ by assigning $0$ on $\h_{\pi_1}\ominus \K_1$, and call it $\tilde{U}$. Then it is immediate that $\tilde{U}\neq0$ and
  $\tilde{U}\pi_1(A)=\pi_2(A)\tilde{U}$ for all $A\in\ox$, violating the condition in  part (3).

$(2)\implies (1)$: Let $\pi_1$ and $\pi_2$ be disjoint.  By
Hahn-Hellinger Theorem (Theorem 7.6, \cite{KRP2}), there exists a
collection, say $\{\lambda^i_n\}_{n\in\N\cup\{\infty\}}$, of $\sigma$-finite
positive measures (possibly zero measures) mutually singular to one
another such that, upto unitary equivalence, we have
\begin{align*}
\pi_i=\bigoplus_{n\in\N\cup\{\infty\}}n\cdot\pi^{\lambda^i_n}
\end{align*} for  $i=1,2$. Here
$n\cdot \pi^{\lambda_n^i}$ denotes the direct sum of n copies of
$\pi^{\lambda_n^i}$ (when $n=\infty$, the direct sum is countably
infinite). Because $\pi_1$ and $\pi_2$ are disjoint, each
$\pi^{\lambda_n^1}$ must be  disjoint to $\pi^{\lambda_m^2}$ for
$m,n\in\N\cup\{\infty\}$. It then  follows from Lemma
\ref{singulaity and disjointness of pi lambda} that
$\lambda_n^1$ is mutually singular to $\lambda_m^2$ as positive measures.
Therefore for each $n,m$,  there exist
measurable  subsets $X^1_{nm}$ and $X^2_{nm}$ satisfying $X_{nm}^1\cap
X_{nm}^2=\emptyset$ and
\begin{align*}
\lambda_n^1( A)=\lambda_n^1( A\cap X^1_{nm})\;\;\text{ and
}\;\;\lambda_m^2( A)=\lambda_m^2( A\cap X^2_{nm}),
\end{align*}
for all $ A\in\ox$. Set
\begin{align*}
    X^1=\cup_n\cap_m X^1_{nm} \text{ and } X^2=\cup_m\cap_nX^2_{nm}.
\end{align*}
Then by usual set theory rules:
$$
    X^1\cap X^2=\left(\cup_n\cap_m X^1_{nm}\right)\cap\left(\cup_k \cap_l X^2_{lk}\right)
    =\cup_n\cup_k\left[\left(\cap_m X^1_{nm}\right)\cap \left(\cap_lX^2_{lk}\right)\right]
    \subseteq \cup_n\cup_k \left(X^1_{nk}\cap
    X^2_{nk}\right)=\emptyset, $$
by using $X^1_{nk}\cap X^2_{nk}=\emptyset.$ Further for any $A\in\ox$
and fixed $n$, since $\lambda_n^1(A\cap X^1_{nm})=\lambda_n^1(A)$
for all $m$, we have
$$\lambda_n^1(A)\geq\lambda_n^1(A\cap X^1)\geq \lambda_n^1(\cap_m\left( A\cap X^1_{nm})\right)=\lim_l\lambda_n^1\left(\cap_{m=1}^l(A\cap X^1_{nm})\right)=\lambda_n^1(A),$$
where limit  is taken in WOT. This implies $\lambda_n^1(A\cap
X^1)=\lambda_n^1(A).$ Similarly, we get $\lambda_m^2(A\cap
X^2)=\lambda_m^2(A)$ for each $m$. Put differently, we obtain
$\pi^{\lambda_n^i}(A\cap X^i)=\pi^{\lambda_n^i}(A),$ which further
implies that
\begin{align*}
    \pi_i(A\cap X^i)=\bigoplus_{n\in\N\cup\{\infty\}}n\cdot\pi^{\lambda_n^i}(A\cap X^i)=\bigoplus_{n\in\N\cup\{\infty\}}n\cdot\pi^{\lambda_n^i}(A)=\pi_i(A),
\end{align*}
for each $A\in\ox$ and $i=1,2$. Since $X^1$ and $X^2$ are disjoint,
we conclude that $\pi_1$ is mutually  singular to $\pi_2$.
\end{proof}

\begin{remark}\label{mutual singularity implies intertwining operators are zero}
In Theorem \ref{prop:equiavelnt criterion for disjointness}, we assumed that the spectral measures act on separable Hilbert spaces. But the implication (1)$\implies$(3) is true even for non-separable Hilbert spaces and the proof is similar. To see this, let $\pi_i:\ox\to\B(\K_i)$, $i=1,2$ be two mutually singular spectral measures  concentrated on measurable subsets $X_i$ with $X_1\cap X_2=\emptyset$. Here $\K_i$ need not be separable. Let $T\in \mathcal{B}(\K_1,\K_2)$ be such that $T\pi_1(A)=\pi_2(A)T$ for all $A\in\ox$. Then, since $\pi_1(X_1)=\pi_1(X)=I_{\K_1}$ and $\pi_2(X_1)=0$, we have $T=T\pi_1(X_1)=\pi_2(X_1)T=0$. We use this fact in the next theorem.
\end{remark}

\subsection{Direct sums and  $\cst$-extreme points}

We explore the properties of  being $\cst$-extreme or extreme  for
direct sums of mutually singular POVMs. Generally it is enough to
look at individual components to obtain the same property for direct
sums.

\begin{theorem}\label{direct sum of C*extreme points with disjoint support is $C^*$-extreme if and only if each component is $C^*$-extreme}
%(Theorem \emph{Reference}, \cite{Zhou})
Let $\{\mu_i:\ox\to\B(\h_i)\}_{i\in I}$ be a countable collection of normalized POVMs  for some indexing set $I$
such that $\mu_i$ and $\mu_j$ are mutually  singular for  $i\neq j$
in $I$.  Then $\mu=\oplus_{i}\mu_i$ is $C^*$-extreme (extreme)  in
$\mathcal{P}_{\oplus_{i}\h_i}(X)$ if and only if each $\mu_i$ is
$C^*$-extreme (extreme)  in $\mathcal{P}_{\h_i}(X)$.
\end{theorem}
\begin{proof}
For each $i\in I$,  let $(\pi_i, V_i,\h_{\pi_i})$ be the minimal
Naimark dilation for $\mu_i$. Set $\h=\oplus_i\h_i$ and
$\hpi=\oplus_i \h_{\pi_i}$ and let
$\pi=\oplus_{i}\pi_i:\ox\to\mathcal{B}(\hpi)$ and $V=\oplus_i
V_i:\h\to\hpi$. Clearly $\pi$ is a spectral measure and $V$ is an isometry. It is straightforward to check that
\begin{align*}
    [\pi(\ox)V\h]=\hpi \text{ and }\mu(A)=V^*\pi(A)V~~\mbox{for all }A\in\ox.
\end{align*}
This implies that $(\pi,V,\hpi)$ is the minimal Naimark
dilation for $\mu$. Also for $i\neq j$ in $I$, since $\mu_i$ is mutually singular to $\mu_j$, it follows from Proposition \ref{zero sets of mu and pi are same} that
 $\pi_i$ is mutually singular to $\pi_j$.
Now we claim that
\begin{align*}
\pi(\ox)'=\oplus_i\pi_i(\ox)'=\left\{\oplus_iS_i;~S_i\in\pi_i(\ox)'\right\}.
\end{align*}
Let $S\in\pi(\ox)'\subseteq\mathcal{B}(\oplus_i\h_{\pi_i})$. Then $S=[S_{ij}]$ for some $S_{ij}\in\mathcal{B}(\h_{\pi_j},\h_{\pi_i})$. For any $A\in\ox$, therefore
\begin{align*}
&[S_{ij}]\left(\oplus_i\pi_i(A)\right)=\left(\oplus_i\pi_i(A)\right[S_{ij}]\end{align*}
that is
\begin{align*}
[S_{ij}\pi_j(A)]=[\pi_i(A)S_{ij}]
\end{align*}
and hence
$$S_{ij}\pi_j(A)=\pi_i(A)S_{ij}~~\text{for all } i,j\in I.$$
In particular, this says that $S_{ii}\in \pi_i(\ox)'$ for all $i\in I$. Also since $\pi_i$ and $\pi_j$ are mutually singular
for $i\neq j$, it follows from Remark \ref{mutual singularity implies intertwining operators are zero} that
$$S_{ij}=0~~\text{ for }i\neq j.$$
Thus
$$S=[S_{ij}]=\oplus_i S_{ii}\in\oplus_i\pi_i(\ox)'.$$
This proves that $\pi(\ox)'\subseteq\oplus_i\pi_i(\ox)'$. The other inclusion of our claim is obvious.

 In order to prove the equivalent assertions of $C^*$-extremity, we use the claim above and the necessary and sufficient criterion of Theorem \ref{thm:Farenick and Zhou characterization of $C^*$-extreme points} throughout the proof without always mentioning them. First assume  that $\mu$ is $C^*$-extreme in $\px$. Fix $j \in I$ and let  $D_j\in\pi_j(\ox)'$ be positive such that $V_j^*D_jV_j$ is invertible. Define
 $$D=\oplus_i D_i$$
 by assigning $D_i=I_{\h_{\pi_i}}$ for $i\neq j$.
It is clear that $D$ is positive and $D\in\pi(\ox)'$. Since
$$V^*DV=\oplus_iV_i^*D_{i}V_i$$
and $V_i^*D_iV_i$ is invertible for all $i$, it follows that $V^*DV$
is invertible. Therefore, as $\mu$ is $\cst$-extreme in $\px$, we get a co-isometry $U\in\pi(\ox)'$ with $U^*U\sqd =\sqd$ and an invertible operator $T\in \bh$ such that $U\sqd V=VT$. Then $T=[T_{ij}]$ for some $T_{ij}\in\mathcal{B}(\h_j,\h_i)$ and   $U=\oplus_iU_i$ for $U_i\in\pi_i(\ox)'$. Since $U$ is a co-isometry, each $U_i$ is a co-isometry. Also, since
\begin{align*}
    \oplus_iU_i^*U_i D_i^{1/2}=\left(\oplus_iU_i^*\right)\left(\oplus_iU_i\right)\left(\oplus_iD_i^{1/2}\right)=U^*U\sqd=\sqd=\oplus_i D_i^{1/2},
\end{align*}
it follows that
$$U_i^*U_iD_i^{1/2}=D_i^{1/2}~~\text{for all }i.$$
Further, since
\begin{align}\label{eq:UDV}
    \oplus_iU_iD_i^{1/2}V_i=U\sqd V=VT=(\oplus_iV_i)[T_{ij}]=[V_iT_{ij}],
\end{align}
it follows for $i\ne j$ that,   $V_iT_{ij}=0$ and hence $T_{ij}=V_i^*V_iT_{ij}=0$.  This amounts to saying that $T=\oplus_iT_{ii}$ and its invertibility, in particular,  implies that  $T_{jj}$ is invertible in $\mathcal{B}(\h_j)$. Also  \eqref{eq:UDV} yields
 \begin{align*}
     U_jD_j^{1/2}V_j=V_jT_{jj}.
 \end{align*}
 As $U_j$ is a co-isometry in $\pi_j(\ox)'$ satisfying $U_j^*U_jD_j^{1/2}=D_j^{1/2}$ and $T_{jj}$ is invertible in $\B(\h_j)$ such that $U_jD_j^{1/2}V_j=V_jT_{jj}$, we conclude that
 $\mu_j$ is $C^*$-extreme in $\p_{\h_j}(X)$.

Conversely, assume that each $\mu_i$ is $C^*$-extreme in $\p_{\h_i}(X)$. Let $D\in \pi(\ox)'$ be positive such that $V^*DV$ is invertible. Then $D$ is of the form $\oplus_iD_i$  for some $D_i\in\pi_i(\ox)'$. Clearly each $D_i$ is positive. Since $V^*DV$ is invertible and  $V^*DV=\oplus_iV_i^*D_iV_i$, it follows that $V_i^*D_iV_i$ is invertible for all $i\in I$. Again,
as $\mu_i$  is $C^*$-extreme in $\p_{\h_i}(X)$,  we obtain a co-isometry  $U_i\in \pi_i(\ox)'$ with $U_i^*U_iD_i^{1/2}=D_i^{1/2}$ and an invertible operator $T_i\in\mathcal{B}(\h_i)$ such that $U_iD_i^{1/2}V_i=V_iT_i$. Set
$$U=\oplus_iU_i\; \text{  and }\;T=\oplus_iT_i.$$
Then $U\in\piox'$ and $U$ is a co-isometry, as each $U_i$ is a co-isometry. Likewise $T$ is invertible in $\bh$, since each $T_i$  is invertible. Further we note that
\begin{align*}
    U^*U\sqd=\oplus_iU_i^*U_iD_i^{1/2}=\oplus_iD_i^{1/2}=\sqd.
\end{align*}
Similarly we get
$$U\sqd V=\oplus_iU_iD_i^{1/2}V_i=\oplus_iV_iT_i= VT.$$
Thus we conclude that $\mu$ is $C^*$-extreme in $\px$.

The case of equivalent assertions of extremity can be proved similarly, using the claim above and Theorem \ref{thm:extrme point criterion for POVM}. Assume that $\mu$ is extreme in $\px$. Fix $j\in I$ and let $D_j\in\pi_j(\ox)'$ be such that $V_j^*D_jV_j=0$. Define
$$D=\oplus_iD_i$$
by assigning $D_i=0$ for $i\neq j$. Clearly then  $V^*DV=0$. Hence, as $\mu$ is extreme in $\px$, it follows that $D=0$, which in particular implies $D_j=0$. This proves that $\mu_j$ is extreme in $\p_{\h_j}(X)$.

For the converse, assume that each $\mu_i$ is extreme in $\p_{\h_i}(X)$. Let $D\in\pi(\ox)'$ be such that $V^*DV=0$. Again by the claim above, we have $D=\oplus_i D_i$ for some $D_i\in\pi_i(\ox)'$. Also the expression $V^*DV=0$ implies
$$V_i^*D_iV_i=0~~\text{ for each }i.$$
 Hence as $\mu_i$ is extreme, it follows that $D_i=0$ for each $i$, which  in turn shows $D=0$. Thus we conclude that $\mu$ is extreme in $\px$. The proof is now complete.
\end{proof}

The following corollary is just an explicit description of the theorem above.

\begin{corollary}
Let $\mu\in\px$ and let $\{B_i\}$ be a collection of disjoint measurable  subsets such that $X=\cup_i B_i$ and $\mu(B_i)$ is a projection for each $i$. Let $\h_i=\ran(\mu(B_i))$ and  define $\mu_i:\ox\to\mathcal{B}(\h_i)$ by $\mu_i( A)=\mu(B_i\cap A)_{|_{\h_i}}$ for all $A\in \ox$. Then $\mu$ is  $C^*$-extreme (extreme) in $\px$ if and only if each $\mu_i$ is $C^*$-extreme (extreme) in $\mathcal{P}_{\h_i}(X)$.
\end{corollary}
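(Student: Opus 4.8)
The plan is to realize this corollary as a direct instance of Theorem \ref{direct sum of C*extreme points with dijoint support is $C^*$-extreme if and only if each component is $C^*$-extreme}: I would show that, under the stated hypotheses, $\h$ decomposes as $\bigoplus_i\h_i$ and that $\mu=\bigoplus_i\mu_i$, where the $\mu_i$ are pairwise mutually singular normalized POVMs, after which the theorem applies verbatim to both the $C^*$-extreme and the extreme case. Thus the entire content is in setting up the direct-sum picture, and the one genuinely useful input is Proposition \ref{prop:projection commutes with everything}.

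First I would check that each $\mu_i$ is a well-defined normalized POVM with values in $\B(\h_i)$. Since $\mu(B_i)$ is a projection, Proposition \ref{prop:projection commutes with everything} gives $\mu(B_i\cap A)=\mu(B_i)\mu(A)=\mu(A)\mu(B_i)=\mu(B_i)\mu(A)\mu(B_i)$ for every $A\in\ox$; hence $\mu(B_i\cap A)$ leaves $\h_i=\ran(\mu(B_i))$ invariant and vanishes on $\h_i^\perp$, so $\mu_i(A)=\mu(B_i\cap A)_{|_{\h_i}}$ is a positive operator on $\h_i$. Countable additivity of $\mu_i$ is inherited from that of $\mu$, and $\mu_i(X)=\mu(B_i)_{|_{\h_i}}=I_{\h_i}$, so $\mu_i\in\mathcal{P}_{\h_i}(X)$. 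Next, disjointness of the $B_i$ together with Proposition \ref{prop:projection commutes with everything} gives $\mu(B_i)\mu(B_j)=\mu(B_i\cap B_j)=0$ for $i\neq j$, so $\{\mu(B_i)\}$ is a family of pairwise orthogonal projections; being pairwise orthogonal and non-zero for at most countably many indices (as $\h$ is separable), we may discard the indices with $\mu(B_i)=0$ --- for those $\h_i=\{0\}$ and $\mu_i$ is vacuously $C^*$-extreme and extreme --- and so assume the collection is (at most) countable, as required by the theorem. Then countable additivity yields $\sum_i\mu(B_i)=\mu(\cup_i B_i)=\mu(X)=I_\h$ (convergence in SOT), whence $\h=\bigoplus_i\h_i$; and for every $A\in\ox$, again by countable additivity and the identity above, $\mu(A)=\sum_i\mu(B_i\cap A)=\bigoplus_i\mu_i(A)$ under this decomposition, i.e. $\mu=\bigoplus_i\mu_i$.

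It remains to see that the $\mu_i$ are pairwise mutually singular. From the definition, $\mu_i(A\cap B_i)=\mu(B_i\cap A)_{|_{\h_i}}=\mu_i(A)$ for all $A\in\ox$, so $\mu_i$ is concentrated on $B_i$; since $B_i\cap B_j=\emptyset$ for $i\neq j$, the sets $B_i,B_j$ witness $\mu_i\perp\mu_j$. Now Theorem \ref{direct sum of C*extreme points with dijoint support is $C^*$-extreme if and only if each component is $C^*$-extreme}, applied to the countable family $\{\mu_i\}$ of pairwise mutually singular normalized POVMs with $\bigoplus_i\mu_i=\mu$, gives that $\mu$ is $C^*$-extreme (resp.\ extreme) in $\px$ if and only if each $\mu_i$ is $C^*$-extreme (resp.\ extreme) in $\mathcal{P}_{\h_i}(X)$, which is precisely the assertion. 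There is no substantial obstacle here: beyond routine verification, the only point that uses the theory developed earlier is Proposition \ref{prop:projection commutes with everything}, which forces $\mu(B_i)$ to commute with the whole range of $\mu$ and hence makes the restricted maps $\mu_i$ well defined and the decomposition $\mu=\bigoplus_i\mu_i$ exact.
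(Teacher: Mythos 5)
Your proposal is correct and follows essentially the same route as the paper's own proof: orthogonality of the projections $\mu(B_i)$ via Proposition \ref{prop:projection commutes with everything}, the resulting decomposition $\h=\oplus_i\h_i$ and $\mu=\oplus_i\mu_i$ into mutually singular normalized summands, and then a direct appeal to Theorem \ref{direct sum of C*extreme points with dijoint support is $C^*$-extreme if and only if each component is $C^*$-extreme}. You merely spell out routine verifications (well-definedness of each $\mu_i$, concentration on $B_i$, countability of the non-zero indices from separability of $\h$) that the paper leaves implicit.
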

\begin{proof}
Let $(\pi, V,\hpi)$ be the minimal Naimark dilation for $\mu$.  Since
$\mu(B_i)$ is a projection for each $i$ and $B_i$'s are  mutually
disjoint, it follows from Proposition \ref{prop:projection commutes with everything} that $\mu(B_i)$'s are mutually orthogonal projections. Also each $\h_i$ is a reducing subspace for all $\mu(A), A\in \ox$ by Proposition \ref{prop:projection commutes with everything} and hence $\mu_i$ is a well-defined normalized POVM. Further,
since $X=\cup_i B_i$, we have  $\h=\oplus_i \h_i$ and
$\mu=\oplus_i\mu_i$. The assertions now are  direct consequence of
Theorem \ref{direct sum of C*extreme points with disjoint support is
$C^*$-extreme if and only if each component is $C^*$-extreme}.
\end{proof}

As we mentioned earlier in Theorem \ref{thm:every povm decomposes as
a sum of atomic and non atomic povm}, every POVM decomposes uniquely
as a sum of atomic and non-atomic POVMs. Additionally if $\mu$ is
$C^*$-extreme then we show that this decomposition
can be made into  a  direct sum of atomic and non-atomic POVMs such that each of
 the components is $C^*$-extreme. The following theorem effectively
   provides a proof of Theorem \ref{thm:every povm decomposes as a sum of atomic and non atomic povm} and then
   discusses its role in identifying $C^*$-extreme POVMs.
   The proof here follows almost the same procedure which can be found in \cite{Mclaren Plosker Ram}, \cite{Johnson}.

\begin{theorem}\label{thm:every povm decomposes as direct sum of atomic and non atomic povm}
Let $\mu$ be a $\cst$-extreme point in  $\px$.  Then $\mu=\mu_{1}\oplus\mu_{2}$ where
$\mu_{1}$ is an atomic normalized POVM and $\mu_{2}$ is a non-atomic
normalized POVM and  they are mutually singular. Such a
decomposition is unique.
Furthermore $\mu_1$ and $\mu_2$ are
$C^*$-extreme and in particular  $\mu_1$ is spectral.
\end{theorem}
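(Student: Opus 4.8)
The plan is to build the atomic/non-atomic decomposition explicitly, extract the invariant subspaces from Lemma \ref{atoms are projections}, and then invoke the direct-sum machinery of Theorem \ref{direct sum of C*extreme points with dijoint support is $C^*$-extreme if and only if each component is $C^*$-extreme}. First I would apply Zorn's lemma to choose a maximal collection $\{B_i\}_{i\in I}$ of mutually disjoint atoms for $\mu$; as in the proof of Theorem \ref{atomic $C^*$-extreme points are PVM}, each $\mu(B_i)$ is a projection by Lemma \ref{atoms are projections}, and by Proposition \ref{prop:projection commutes with everything} together with disjointness the family $\{\mu(B_i)\}$ consists of mutually orthogonal projections, so separability of $\h$ forces $I$ to be countable. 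Set $E_1=\cup_{i\in I}B_i$ and $E_2=X\setminus E_1$. The key claim, exactly as argued in Theorem \ref{atomic $C^*$-extreme points are PVM}, is that $\mu(A)=\sum_{i\in I}\mu(A\cap B_i)$ for every $A\in\ox$ (otherwise maximality is violated by producing a new atom inside $A\setminus\cup_i(A\cap B_i)$); in particular $\mu(E_1)=\sum_i\mu(B_i)$ is a projection and $\mu$ is concentrated on $E_1$ in the sense that $\mu(A)=\mu(A\cap E_1)+\mu(A\cap E_2)$ with $\mu(E_1)\mu(E_2)=0$.

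Next I would define $\mu_1,\mu_2$ by compression: with $\h_j=\ran(\mu(E_j))$ (so $\h=\h_1\oplus\h_2$ since $\mu(E_1),\mu(E_2)$ are complementary orthogonal projections), put $\mu_j(A)=\mu(A\cap E_j)_{|_{\h_j}}$. Each $\mu_j$ is a normalized POVM, $\mu=\mu_1\oplus\mu_2$, and they are mutually singular via the witnessing sets $E_1,E_2$. That $\mu_1$ is atomic is immediate: the atoms $B_i$ of $\mu$ are atoms of $\mu_1$, and any $A$ with $\mu_1(A)\ne0$ meets some $B_i$ with $\mu_1(A\cap B_i)\ne0$, whence $A\cap B_i$ is an atom of $\mu_1$ inside $A$. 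That $\mu_2$ is non-atomic follows because an atom $F\subseteq E_2$ for $\mu_2$ would be an atom for $\mu$ disjoint from all $B_i$, again contradicting maximality. For uniqueness I would argue that any decomposition $\mu=\nu_1\oplus\nu_2$ into atomic and non-atomic mutually singular normalized POVMs must have $\nu_1$ concentrated on $E_1$ and $\nu_2$ on $E_2$: the atoms of $\nu_1$ are atoms of $\mu$, so by maximality each is absorbed (mod $\mu$-null sets) into $\cup B_i$, and $\nu_2$ can have no mass on $E_1$ since that would produce an atom; this pins down $\nu_j(A)=\mu(A\cap E_j)$, hence the decomposition. (Alternatively one may cite Theorem \ref{thm:every povm decomposes as a sum of atomic and non atomic povm}.)

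Finally, since $\mu=\mu_1\oplus\mu_2$ is a direct sum of two mutually singular normalized POVMs and $\mu$ is $C^*$-extreme in $\px$, Theorem \ref{direct sum of C*extreme points with dijoint support is $C^*$-extreme if and only if each component is $C^*$-extreme} gives that $\mu_1$ and $\mu_2$ are each $C^*$-extreme (in $\mathcal{P}_{\h_1}(X)$ and $\mathcal{P}_{\h_2}(X)$ respectively). Then $\mu_1$, being atomic and $C^*$-extreme, is spectral by Theorem \ref{atomic $C^*$-extreme points are PVM}. I expect the main obstacle to be the bookkeeping around null sets in the uniqueness argument — making precise that "the atoms of any atomic part sit inside $\cup B_i$ up to $\mu$-null sets" and concluding $\nu_j(A)=\mu(A\cap E_j)$ — rather than anything conceptually deep; everything else is assembling results already proved above.
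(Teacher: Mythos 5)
Your construction and final assembly are exactly the paper's: maximal disjoint family of atoms, Lemma \ref{atoms are projections} plus Proposition \ref{prop:projection commutes with everything} to get mutually orthogonal projections and countability, compression to $\h_j=\ran(\mu(E_j))$, then Theorem \ref{direct sum of C*extreme points with dijoint support is $C^*$-extreme if and only if each component is $C^*$-extreme} and Theorem \ref{atomic $C^*$-extreme points are PVM} for the last sentence. However, your ``key claim'' that $\mu(A)=\sum_{i\in I}\mu(A\cap B_i)$ for \emph{every} $A\in\ox$ is false for the $\mu$ at hand, and the justification you import from Theorem \ref{atomic $C^*$-extreme points are PVM} is invalid here: taking $A=X$ the claim says $\mu(X\setminus\cup_iB_i)=0$, i.e.\ $\mu$ has no non-atomic part, and the maximality argument needs atomicity of $\mu$ to manufacture an atom inside a set of nonzero measure --- precisely what is unavailable (the Toeplitz/Hardy-space POVM is $C^*$-extreme with no atoms at all, so there the right-hand side is the empty sum). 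Fortunately nothing downstream uses the claim: $\mu(E_1)=\sum_i\mu(B_i)$ is plain countable additivity over the disjoint $B_i$, it is a projection because the $\mu(B_i)$ are mutually orthogonal projections, and your arguments that $\mu_1$ is atomic and $\mu_2$ non-atomic only use additivity within $E_1$ and maximality. Delete the claim (or assert it only for $A\subseteq E_1$) and the existence half coincides with the paper's proof.

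The uniqueness half is where you are materially vaguer than the paper. ``$\nu_j(A)=\mu(A\cap E_j)$'' does not typecheck: $\nu_j$ acts on $\K_j$, which you have not yet identified with $\h_j$, and that identification is the actual content. The paper does it by invoking Proposition \ref{atomic and non atomic povms are mutually singular} to get $\mu_1\perp\nu_2$ and $\mu_2\perp\nu_1$, choosing the concentration sets $Y_i$ of $\nu_i$ disjoint from the opposite $E_j$, and then reading off $I_{\K_i}=\nu_i(Y_i)=\mu(E_i)\vert_{\K_i}$, which forces $\K_i\subseteq\h_i$, with equality by symmetry, and finally $\nu_i=\mu_i$. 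Your sketch (``atoms of $\nu_1$ are absorbed into $\cup_iB_i$ mod null sets'') needs this same singularity input even to start: an atom of $\nu_1$ is an atom of $\mu$ only after intersecting with a set carrying $\nu_1$ and not $\nu_2$, since a priori $\mu(C)=\nu_1(C)\oplus\nu_2(C)$. Your alternative of citing Theorem \ref{thm:every povm decomposes as a sum of atomic and non atomic povm} does work (pad $\nu_1,\nu_2$ by zero to POVMs on $\h$, check that padding preserves atomicity/non-atomicity, then match $\nu_i(X)$ with $\mu(E_i)$ to get $\K_i=\h_i$), but either way the subspace identification must be made explicit rather than absorbed into ``bookkeeping around null sets.''
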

\begin{proof}
 Let $\{B_j\}_{j\in J}$ be a maximal collection of mutually disjoint atoms for $\mu$, which
 exists due to Zorn's lemma. As in the proof of Theorem \ref{atomic $C^*$-extreme points are PVM},
  since $\mu$ is $\cst$-extreme, we note using Lemma \ref{atoms are projections}  that   $\mu(B_j)$ is a projection for each $j$. Also $\{\mu(B_j)\}_{j\in J}$ are mutually orthogonal by Proposition \ref{prop:projection commutes with everything}. Since $\h$ is separable, it follows that $J$ is countable. This further implies that if we set $X_1=\cup _{j\in J}B_j$, then since
\begin{align}
    \mu(X_1)=\sum_{j\in J}\mu( B_j),
\end{align}
 $\mu(X_1)$ is a projection.
Now set $X_2=X\setminus X_1$. For $i=1,2$,  let $\h_i=\ran(\mu(X_i))$, and define the operator valued measures $\mu_i:\ox\to\B(\h_i)$  by
\begin{align*}
&\mu_i(A)=\mu(A\cap X_i)_{|_{\h_i}}=\mu(A)_{|_{\h_i}}~~\mbox{ for all }A\in\ox.
\end{align*}
 It is clear that each $\mu_i$ is a normalized POVM. Also $\h=\h_1\oplus\h_2$ and
\begin{align*}
    \mu=\mu_1\oplus\mu_2.
\end{align*}
Now we show that $\mu_1$ is atomic. Assume that $\mu_1(A)\neq0$ for some $A\in\ox$. Then $\mu(A\cap X_1)\neq 0$ and, since
\begin{align*}
    \mu(A\cap X_1)=\sum_{j\in J}\mu(A\cap B_j),
\end{align*}
 it follows that $\mu(A \cap B_j)\neq 0$ for some $j$ and hence $\mu_1(A\cap B_j)\neq0$. Therefore, as $B_j$ is an atom for $\mu$, $A\cap B_j$ is an atom for $\mu$. Consequently, as $\mu_1(A\cap B_j)\neq0$, it follows that $A\cap B_j$ is an atom for $\mu_1$. Thus we have  got an atom contained in the subset $A$  with $\mu_1(A)\neq0$, which shows that $\mu_1$ is atomic.
To prove that $\mu_2$ is non-atomic, let if possible, $A$ be an atom for $\mu_2$. Since $\mu_2$ is concentrated on $X_2$,  $A\cap X_2$ is an atom for $\mu_2$ and hence $A\cap X_2$ is an atom for $\mu$. But then  $\{B_j\}_{j\in J}\cup\{A\cap X_2\}$ is a collection of mutually disjoint atoms for $\mu$, violating the maximality of the collection $\{B_j\}_{j\in J}$. Thus we conclude that $\mu_2$ is non-atomic. It is clear that $\mu_1$ and $\mu_2$ are mutually singular.

To show the uniqueness, let $\nu_1\oplus\nu_2$ be another such
decomposition with atomic $\nu_1\in \p_{\K_1}(X)$ and non-atomic
$\nu_2\in \p_{\K_2}(X)$ where $\h=\K_1\oplus\K_2$. We shall show that $\K_i=\h_i$ and
$\nu_i=\mu_i$ for $i=1,2.$ Let $Y_1$ and $Y_2$ be disjoint measurable subsets
such that $\nu_i(A)=\nu_i(A\cap Y_i)$ for  all $A\in \ox$. We know from
Proposition \ref{atomic and non atomic povms are mutually singular}
that $\mu_1\perp\nu_2$ and $\mu_2\perp\nu_1$ and so $Y_1$ and $Y_2$
can be chosen so that $Y_1\cap X_2=Y_2\cap X_1=\emptyset$.
Therefore for each $i=1,2$, since both $\mu_i$ and $\nu_i$ are
concentrated on $X_i\cup Y_i$ and $(X_1\cup Y_1)\cap (X_2\cup Y_2)=\emptyset$, we can assume without loss of generality, that $X_i=Y_i$ (just replace $X_i, Y_i$ by $X_i\cup Y_i$). Further note that
\begin{align*}
   I_{\K_i} =\nu_i(Y_i)=\mu(Y_i)_{|_{\K_i}}=\mu(X_i)_{|_{\K_i}}={P_{\h_i}}_{|_{\K_i}},
\end{align*}
 where $P_{\h_i}$ denotes the projection of $\h$ onto $\h_i$. This implies  $\K_i\subseteq \h_i$.  By symmetry, we have $\h_i\subseteq \K_i$. Hence $\K_i=\h_i$. Similarly for all $A\in \ox$, we get
\begin{align*}
    \nu_i(A)=\nu_i(A\cap Y_i)=\mu(A\cap Y_i)_{|_{\K_i}}=\mu(A\cap X_i)_{|_{\h_i}}=\mu_i(A\cap X_i)=\mu_i(A)
\end{align*}
showing that $\nu_i=\mu_i$. The second statement  follows from
Theorem \ref{direct sum of C*extreme points with disjoint support is
$C^*$-extreme if and only if each component is $C^*$-extreme} and
Theorem \ref{atomic $C^*$-extreme points are PVM}.
\end{proof}

\begin{remark}
In the theorem above, we cannot expect a similar kind of direct sum
decomposition for a normalized POVM which is not  $\cst$-extreme. To
see an example, let $\lambda_1$ and $\lambda_2$ be two probability
measures on some measurable space $X$ such that $\lambda_1$ is
atomic while $\lambda_2$ is non-atomic. Let $T\in\bh$ be a positive
contraction which is not a projection. Consider the POVM $\mu\in\px$
defined by $\mu (\cdot )
=\lambda_1(\cdot)T+\lambda_2(\cdot)(I_\h-T)$. One can easily verify
that no decomposition of $\mu$ into a direct sum of atomic and
non-atomic normalized POVMs exists.
\end{remark}

One reason for us to study the notion of mutually singular POVMs  is
the following result. Its proof follows from Theorem \ref{thm:every povm
decomposes as direct sum of atomic and non atomic povm}  and
Theorem \ref{direct sum of C*extreme points with disjoint support is
$C^*$-extreme if and only if each component is $C^*$-extreme}.
 Since we have already  characterized all atomic $\cst$-extreme points (Theorem \ref{atomic $C^*$-extreme points are PVM}), it says
  in particular that it
   is sufficient to look for the characterization of  non-atomic $C^*$-extreme points to understand
   the general situation.

\begin{corollary}\label{iff criterion for C*-extreme points via atomic and non atomic povm}
Let $\mu:\ox\to\bh$ be a normalized POVM and let $X_1=\cup_{i\in I}B_i$ be the union of a maximal collection $\{B_i\}_{i\in I}$ of mutually disjoint atoms for $\mu$. Let $X_2=X\setminus X_1$.
Then $\mu$ is $\cst$-extreme in $\px$ if and only if
\begin{enumerate}
    \item the operators $\mu(X_1)$ and $\mu(X_{2})$ are projections and,
    \item   $\mu=\mu_1\oplus\mu_{2}$ such that $\mu_i$ is $C^*$-extreme in $\mathcal{P}_{\h_i}(X)$,
    %and $\mathcal{P}_{\h_2}(X)$ respectively
    where $\h_i=\ran(\mu(X_i))$ and $\mu_i=\mu(\cdot)_{|_{\h_i}}$ for $i=1,2$.
    %and $\h_2=\ran(\mu(X_2))$.
\end{enumerate}
\end{corollary}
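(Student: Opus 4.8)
The result is essentially a repackaging of the two main results of Section~\ref{mutually singular povm}: the atomic/non-atomic direct sum decomposition of a $\cst$-extreme POVM (Theorem~\ref{thm:every povm decomposes as direct sum of atomic and non atomic povm}) and the fact that a direct sum of mutually singular normalized POVMs is $\cst$-extreme precisely when each summand is (Theorem~\ref{direct sum of C*extreme points with dijoint support is $C^*$-extreme if and only if each component is $C^*$-extreme}). So the plan is to prove the two implications separately; the only genuine work is to check that the data $X_1,X_2,\h_i,\mu_i$ appearing in the statement are exactly the data those theorems produce.

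For the implication ``$\Rightarrow$'', I would rerun the argument from the proof of Theorem~\ref{thm:every povm decomposes as direct sum of atomic and non atomic povm} with the given maximal disjoint collection $\{B_i\}_{i\in I}$ of atoms. Since $\mu$ is $\cst$-extreme, Lemma~\ref{atoms are projections} makes each $\mu(B_i)$ a projection and Proposition~\ref{prop:projection commutes with everything} makes the $\mu(B_i)$ mutually orthogonal, so $\mu(X_1)=\sum_{i\in I}\mu(B_i)$ is a projection; hence so is $\mu(X_2)=\mu(X)-\mu(X_1)=I_\h-\mu(X_1)$, which gives (1). The same argument then yields $\h=\h_1\oplus\h_2$ with $\h_i=\ran(\mu(X_i))$, $\mu=\mu_1\oplus\mu_2$ with $\mu_i=\mu(\cdot)_{|_{\h_i}}$, and each $\mu_i$ $\cst$-extreme in $\mathcal{P}_{\h_i}(X)$ (with $\mu_1$ in fact spectral), which is (2).

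For the implication ``$\Leftarrow$'', assume (1) and (2). From (1), $P_i:=\mu(X_i)$ are projections, and $P_1+P_2=\mu(X)=I_\h$ forces them to be complementary, so $\h=\h_1\oplus\h_2$ with $\h_i=\ran(P_i)$. By Proposition~\ref{prop:projection commutes with everything}, $P_1$ commutes with $\mu(A)$ for every $A\in\ox$, so each $\h_i$ reduces every $\mu(A)$ and $\mu=\mu_1\oplus\mu_2$ with $\mu_i=\mu(\cdot)_{|_{\h_i}}\in\mathcal{P}_{\h_i}(X)$. The step needing a word of justification is that $\mu_1\perp\mu_2$: since $\mu_1(X_2)=\mu(X_2)_{|_{\h_1}}=0$ (because $\mu(X_2)=I_\h-\mu(X_1)$ vanishes on $\h_1=\ran(\mu(X_1))$), and likewise $\mu_2(X_1)=0$, the POVMs $\mu_1,\mu_2$ are concentrated on the disjoint sets $X_1,X_2$ respectively. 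Now (2) says each $\mu_i$ is $\cst$-extreme in $\mathcal{P}_{\h_i}(X)$, so Theorem~\ref{direct sum of C*extreme points with dijoint support is $C^*$-extreme if and only if each component is $C^*$-extreme} gives that $\mu=\mu_1\oplus\mu_2$ is $\cst$-extreme in $\px$, completing the proof.

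I expect no serious obstacle: the corollary is bookkeeping layered on Theorems~\ref{thm:every povm decomposes as direct sum of atomic and non atomic povm} and~\ref{direct sum of C*extreme points with dijoint support is $C^*$-extreme if and only if each component is $C^*$-extreme}. The two points that require a little care are (i) in the forward direction, reusing the construction inside the proof of Theorem~\ref{thm:every povm decomposes as direct sum of atomic and non atomic povm} rather than merely quoting its statement, so that the sets $X_1,X_2$ match the ones in the hypothesis; and (ii) in the reverse direction, verifying mutual singularity of $\mu_1$ and $\mu_2$ before invoking the direct-sum theorem, whose hypotheses require it.
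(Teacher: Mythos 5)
Your proof is correct and follows essentially the same route as the paper, which simply derives the corollary from Theorem \ref{thm:every povm decomposes as direct sum of atomic and non atomic povm} and Theorem \ref{direct sum of C*extreme points with dijoint support is $C^*$-extreme if and only if each component is $C^*$-extreme}. Your added care in rerunning the decomposition argument with the given maximal collection of atoms and in checking mutual singularity of $\mu_1$ and $\mu_2$ before invoking the direct-sum theorem just makes explicit the bookkeeping the paper leaves to the reader.
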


\section{Measure Isomorphic  POVMs}\label{measure isomorphism}

We digress a bit from the earlier developments and explore
$\cst$-extreme properties from the perspective of measure
isomorphism.
 In classical measure theory, this notion has been examined extensively. The idea
  is to neglect  measure zero subsets in considering isomorphisms. %Measure isomorphism has been looked into the case of arbitrary vector valued measures.
One consequence is that most questions about abstract measure spaces
get reduced to questions about sub $\sigma $-algebras of the Borel
$\sigma$-algebra of the unit interval $[0,1].$ In a sense this space
is universal.

Measure isomorphism for POVMs seems to have been first studied in
\cite{Dorofeev Graaf}. Our
 aim here is quite limited  to investigate preservation
   of some natural properties of POVMs, especially $\cst$-extremity, under this isomorphism.
Here too we see the role of the unit interval.

Let $X$ be a measurable space and $\h$ a Hilbert space. Let $\mu:\ox\to\bh$ be a POVM. For
each $A\in\ox$, let $[A]_{\mu}$ denote the set
$$
    [A]_\mu:=\left\{B\in\ox; \mu(A\setminus B)=0=\mu(B\setminus A)\right\}
    =\{B\in\ox;\mu(B)=\mu(A)=\mu(B\cap A)\}. $$
Consider
\begin{align*}
    \Sigma(\mu):=\left\{[A]_\mu;A\in\ox\right\}.
\end{align*}
Then $\Sigma(\mu)$ is a Boolean $\sigma$-algebra under the following operations:
\begin{align}
   \label{eq:setminus relation} [A]_\mu\setminus [B]_\mu=[A\setminus B]_\mu\\
    \label{eq:intersection relation}[A]_{\mu}\cap[B]_\mu=[A\cap B]_\mu
    %\cap_{n}[A_n]_\mu=[\cap_nA_n]_\mu.
\end{align}
for any $A,B\in\ox$.
Define $\tilde{\mu}:\Sigma(\mu)\to\bh$ by
\begin{align*}
    \tilde{\mu}([A]_\mu)=\mu(A)~~\mbox{for all }A\in\ox,
\end{align*}
which is well defined by virtue of the very
definition of $[A]_\mu$. If there is no possibility of confusion, we
shall still denote $\tilde{\mu}$ by $\mu$ only.

\begin{definition}(\cite{Dorofeev Graaf})
For $i=1,2$, let $X_i$ be two measurable spaces and let $\h$ be a Hilbert space.
Two POVMs $\mu_i:\mathcal{O}(X_i)\to\bh$ are called
\emph{measure isomorphic} and denoted by $\mu_1\cong \mu_2$, if
there exists a Boolean isomorphism
$\Phi:\Sigma(\mu_1)\to\Sigma(\mu_2)$ i.e. $\Phi$ is bijective and
both $\Phi$ and $\Phi^{-1}$ preserve  the  operations in \eqref{eq:setminus relation} and \eqref{eq:intersection relation}:
\begin{align}\label{eq:measure isomorphism eqations}
   \nonumber &\Phi\left([A_1]_{\mu_1}\setminus[B_1]_{\mu_1}\right)=\Phi([A_1]_{\mu_1})\setminus\Phi([B_1]_{\mu_1}),\\
    &\Phi\left([A_1]_{\mu_1}\cap[B_1]_{\mu_1}\right)=\Phi\left([A_1]_{\mu_1}\right)\cap\Phi\left([B_1]_{\mu_1}\right)\;etc.
   %\nonumber %&\Phi\left(\cup_n[A_n]_{\mu_1}\right)=\cup_n\Phi\left([A]_{\mu_1}\right) etc...,
\end{align}
such that $\mu_1\left(A_1\right)=\mu_2\left(\Phi([A_1]_{\mu_1})\right)$ for all $A_1,B_1\in\mathcal{O}(X_1)$.
\end{definition}

The following theorem compares some natural properties of POVMs under measure isomorphism.

\begin{theorem}\label{some similar properties of isomorphic POVM}
Let $\mu_i:\mathcal{O}(X_i)\to\bh$, $i=1,2$ be  two normalized POVMs such that they are measure isomorphic. Then we have the following:
\begin{enumerate}
\item  $\mu_1$ is a spectral measure if and only if  $\mu_2$ is a spectral measure.
\item  $\mu_1$ is atomic (non-atomic) if and only if $\mu_2$ is atomic (non-atomic).
\item  $\mu_1$ is $C^*$-extreme (extreme) in $\mathcal{P}_\h(X_1)$ if and only if $\mu_2$ is $C^*$-extreme (extreme) in $\mathcal{P}_\h(X_2)$.
\end{enumerate}
\end{theorem}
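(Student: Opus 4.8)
The plan is to show that a measure isomorphism $\Phi:\Sigma(\mu_1)\to\Sigma(\mu_2)$ induces, via the quotient construction, an identification under which the two POVMs become literally the same map, so that all three properties transfer automatically. Concretely, I would first record the key structural fact: each $\mu_i$ factors through its quotient $\sigma$-algebra as $\tilde\mu_i:\Sigma(\mu_i)\to\bh$, and the hypothesis says precisely that $\tilde\mu_1=\tilde\mu_2\circ\Phi$. So everything about $\mu_i$ that is already invariant under passing to $\tilde\mu_i$ — i.e. everything that only depends on the values $\mu_i(A)$ and the Boolean operations on equivalence classes — is common to $\mu_1$ and $\mu_2$. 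The proof of each of (1), (2), (3) is then a matter of checking that the relevant notion is indeed such a ``quotient-invariant'' notion.

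For part (1): $\mu_1$ is spectral iff $\mu_1(A)$ is a projection for every $A$, equivalently (by the Remark following the Notation) iff $\mu_1(A\cap B)=\mu_1(A)\mu_1(B)$ for all $A,B$. Using $[A\cap B]_{\mu_1}=[A]_{\mu_1}\cap[B]_{\mu_1}$ and $\mu_1(A)=\mu_2(\Phi([A]_{\mu_1}))$, and the fact that $\Phi$ preserves $\cap$, this condition is seen to hold for $\mu_1$ iff it holds for $\mu_2$. For part (2): an atom $E$ of $\mu_i$ is characterized by $\mu_i(E)\neq 0$ together with: for all $A\subseteq E$, $\mu_i(A)=0$ or $\mu_i(E\setminus A)=0$. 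Since $\Phi$ is a Boolean isomorphism, the class $[E]_{\mu_1}$ being an ``atom of $\tilde\mu_1$'' in the obvious sense corresponds exactly to $\Phi([E]_{\mu_1})$ being an atom of $\tilde\mu_2$; and because $\Phi$ is onto, any atom of $\mu_2$ arises this way. Atomicity/non-atomicity follows since those are just ``every nonzero class dominates an atom class'' / ``no atom class exists'', again purely quotient-level statements. One mild technical point I would address: an atom of $\mu_i$ (a genuine subset) versus an atom class in $\Sigma(\mu_i)$; these correspond bijectively because whenever $\mu_i(A)=\mu_i(A\cap E)$ and $A\cap B\mapsto$ Boolean operations behave as in \eqref{eq:setminus relation}, \eqref{eq:intersection relation}.

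For part (3), which is the substantive one: I would argue that a measure isomorphism induces a bijection between $\cst$-convex combinations (and unitary conjugations) on the two sides that preserves the relevant relations, so $\cst$-extremity transfers. The cleanest route is to use the abstract characterization already proved, namely Corollary \ref{Zhou Characterization of $C^*$-extreme points}: $\mu_i$ is $\cst$-extreme iff every POVM $\nu_i\leq\mu_i$ with $\nu_i(X_i)$ invertible is of the form $S^*\mu_i(\cdot)S$. So I would show that $\nu_1\mapsto\nu_2$, where $\nu_2$ is the unique POVM on $\mathcal O(X_2)$ with $\tilde\nu_2\circ\Phi=\tilde\nu_1$ (well-defined once one checks $\nu_1$ is constant on $\mu_1$-equivalence classes, which follows from $\nu_1\leq\mu_1$ and $\nu_1\geq 0$), gives an order-isomorphism of the sets $\{\nu_i:\nu_i\leq\mu_i\}$ preserving the value at the total space; then $\nu_1=S^*\mu_1(\cdot)S$ on all of $\mathcal O(X_1)$ iff $\nu_2=S^*\mu_2(\cdot)S$ on all of $\mathcal O(X_2)$, the same operator $S$ working on both sides. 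The extreme-point case is handled identically, using ordinary convex combinations $\mu_i=(\nu_i+\nu_i')/2$ and the fact that the correspondence $\nu\mapsto\nu$-on-the-other-side is affine.

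The main obstacle, and the only place requiring genuine care rather than bookkeeping, is the well-definedness of the transferred POVM $\nu_2$ from a dominated POVM $\nu_1\leq\mu_1$: one must verify that $\mu_1(B)=\mu_1(A)=\mu_1(A\cap B)$ forces $\nu_1(A)=\nu_1(B)$ (so that $\nu_1$ descends to $\Sigma(\mu_1)$ and can be pushed along $\Phi$), and then that the resulting set function on $\mathcal O(X_2)$ is countably additive in WOT. The first follows from $0\leq\nu_1\leq\mu_1$ and $\mu_1(A\triangle B)=0$ (so $\langle h,\nu_1(A\triangle B)h\rangle\leq\langle h,\mu_1(A\triangle B)h\rangle=0$); the countable additivity of $\nu_2$ is inherited because $\Phi$ is a Boolean $\sigma$-isomorphism, so it sends countable disjoint unions to countable disjoint unions at the level of classes, and $\nu_1$ is already $\sigma$-additive. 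Once this machinery is in place, all three items follow with short arguments, so I would state the well-definedness as a lemma-like first paragraph of the proof and then dispatch (1), (2), (3) in turn.
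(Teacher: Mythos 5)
Your proof is correct, and parts (1) and (2) proceed essentially as in the paper (the paper argues with representatives of classes rather than with atom classes of the quotient Boolean algebra, but the content — that being an atom, and atomicity, are quotient-level notions preserved by $\Phi$ — is the same). The genuine divergence is in part (3): the paper stays at the level of the definition of $C^*$-extremity, transferring a proper $C^*$-convex combination $\mu_1=\sum_j T_j^*\mu_1^j(\cdot)T_j$ across $\Phi$ by setting $\mu_2^j(A_2)=\mu_1^j\left(\Phi^{-1}[A_2]_{\mu_2}\right)$, which is well defined because $T_j^*\mu_1^j(\cdot)T_j\le\mu_1$ and $T_j$ is invertible, so each $\mu_1^j$ vanishes on $\mu_1$-null sets; it then pulls back the unitaries furnished by $C^*$-extremity of $\mu_2$. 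You instead transfer dominated POVMs and invoke the criterion of Corollary \ref{Zhou Characterization of $C^*$-extreme points}; the well-definedness issue you isolate ($0\le\nu_1\le\mu_1$ forces $\nu_1$ to vanish on $\mu_1$-null sets) is the exact analogue of the paper's step, and your observations that $\nu_2(X_2)=\nu_1(X_1)$ (so invertibility at the total space is preserved, since $\Phi$ maps the top class to the top class) and that the same invertible $S$ works on both sides close the argument; the classical extreme case transfers by the same affine bookkeeping, as in the paper. What your route buys is that you never have to manufacture unitaries or re-verify normalization of transferred components; what the paper's route buys is independence from Corollary \ref{Zhou Characterization of $C^*$-extreme points} and a uniform treatment directly from the definition. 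One point you treat more carefully than the paper: countable additivity of the transferred set functions, which indeed follows either from the fact that a Boolean isomorphism preserves countable suprema (these are order-theoretic, and $[\cup_n B_n]_{\mu}=\sup_n[B_n]_{\mu}$) or, even more simply, from finite additivity plus domination by the countably additive $\mu_2$; the paper leaves this implicit.
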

\begin{proof}
Let $\Phi:\Sigma(\mu_1)\to\Sigma(\mu_2)$ be a Boolean isomorphism satisfying $\mu_1(A_1)=\mu_2(\Phi([A]_{\mu_1}))$ for all $A_1\in\mathcal{O}(X_1)$. By symmetry, it is enough to prove the statements in just one direction.

(1) This is  straightforward by isomorphism. If $\mu_2$ is a spectral measure then for any $A_1\in \mathcal{O}(X_1)$, $\mu_2(\Phi([A_1]_{\mu_1}))$ is a projection. Since $\mu_1(A_1)=\mu_2\left(\Phi([A_1]_{\mu_1})\right)$, it follows that $\mu_1(A_1)$ is a projection and hence $\mu_1$ is a spectral measure.

(2)  Firstly we claim that if $A_1$ is an atom for $\mu_1$, then $A_2$ is an atom for $\mu_2$ for any $A_2\in \Phi([A_1]_{\mu_1})$. To see this, first note that $\mu_2(A_2)=\mu_1(A_1)\neq0$.
 Let $A_2'\subseteq A_2$ be a measurable  subset. Then for any  $ A'_1\in\Phi^{-1}([ A_2']_{\mu_2})$, we have
 \begin{align*}
    \Phi\left([A'_1\cap A_1]_{\mu_1}\right)= \Phi\left([A'_1]_{\mu_1}\right) \cap  \Phi\left([A_1]_{\mu_1}\right)= [A_2']_{\mu_2}\cap[A_2]_{\mu_2}=[A_2'\cap A_2]_{\mu_2}=[A_2']_{\mu_2}=\Phi([ A_1']_{\mu_1})
    %\Phi^{-1}\left([ A'_2\cap A_2]_{\mu_2}\right)= \Phi^{-1}\left([A_2]_{\mu_2}\right)=[\{x_1\}]_{\mu_1}
 \end{align*}
 and hence $[A'_1\cap A_1]_{\mu_1}=[A_1']_{\mu_1}$, which in turn implies
 \begin{align}\label{eq: measure of A_1' cap A_1}
 \mu_1(A_1'\cap A_1)=\mu_1(A_1').
 \end{align}
 But since $A_1$ is atomic for $\mu_1$, we have
 \begin{align*}
     \text{either }\mu_1(A_1'\cap A_1)=0\;\text{ or }\;\mu_1(A_1'\cap A_1)=\mu_1(A_1)
 \end{align*}
 and therefore from \eqref{eq: measure of A_1' cap A_1},
  \begin{align*}
     \text{either }\mu_1(A_1')=0\;\text{ or }\;\mu_1(A_1')=\mu_1(A_1).
 \end{align*}
 Since $A_1\in \Phi^{-1}([A_2]_{\mu_2})$ and $A'_1\in \Phi^{-1}([A_2']_{\mu_2})$, it follows that
  \begin{align*}
     \text{either }\mu_2(A_2')=0\;\text{ or }\;\mu_2(A_2')=\mu_2(A_2).
 \end{align*}
 This shows our claim that $A_2$ is an atom for $\mu_2$.

Now assume that $\mu_1$ is atomic. To show that $\mu_2$ is atomic, let $A_2\in\mathcal{O}(X_2)$ be such that $\mu_2(A_2)\neq 0$. If $A_1\in \Phi^{-1}([A_2]_{\mu_2})$, then $\mu_1(A_1)=\mu_2(A_2)\neq0$. Since $\mu_1$ is atomic, $A_1$ contains an atom for $\mu_1$, say $A_1'$. Fix $A_2'\in \Phi([A_1']_{\mu_1})$. Then $A_2'$ is an atom for $\mu_2$ by the claim above. As above we show that $\mu_2(A_2'\cap A_2)=\mu_2(A_2')$, which implies that $A_2'\cap A_2$ is an atom for $\mu_2$ contained in $A_2$. This proves that $\mu_2$ is atomic. Similarly if $\mu_1$ is non-atomic, then there is no atom for $\mu_1$, and again it follows from the claim above that there is no atom for $\mu_2$, which is equivalent to saying that $\mu_2$ is non-atomic.

(3) Assume that $\mu_2$ is $C^*$-extreme in $\mathcal{P}_{\h}(X_2)$. To show that $\mu_1$ is $\cst$-extreme in $\p_\h(X_1)$, let $\mu_1(\cdot)=\sum_{j=1}^nT_j^*\mu_1^j(\cdot)T_j$ be a proper  $C^*$-convex combination in $\mathcal{P}_\h(X_1)$. For each $j$, define $\mu_2^j:\mathcal{O}(X_2)\to\bh$ by
\begin{align*}
    \mu_2^j(A_2)=\mu_1^j\left(\Phi^{-1}\left([A_2]_{\mu_2}\right)\right)~~\text{for all }A_2\in\mathcal{O}(X_2).
\end{align*}
For $\mu_2^j$ to be  well defined, we need to show that $\mu_1^j(A_1)=\mu_1^j( A_1')$ for any  $A_1,  A_1'\in \Phi^{-1}([A_2]_{\mu_2})$. So fix $A_1,  A_1'\in \Phi^{-1}([A_2]_{\mu_2})$. Then   $[A_1]_{\mu_1}=[ A_1']_{\mu_1}$ and hence, we get
 \begin{align*}
    \mu_1(A_1\setminus A_1')=0=\mu_1( A_1'\setminus A_1).
\end{align*}
 Therefore, since  $T_j^*\mu_1^j(\cdot)T_j\leq\mu_1(\cdot)$, it follows that
 \begin{align*}
     T_j^*\mu_1^j(A_1\setminus A_1')T_j=0=T_j^*\mu_1^j( A_1'\setminus A_1)T_j
 \end{align*}
 which, as $T_j$ is invertible, yields
 \begin{align*}
     \mu_1^j(A_1\setminus A_1')=0=\mu_1^j( A_1'\setminus A_1).
 \end{align*}
This implies the requirement for well-definedness of $\mu_2^j$. Also note that
\begin{equation}
\mu_1^j(A_1)=\mu_1^j\left(\Phi^{-1}(\Phi([A_1]_{\mu_1})\right)=\mu_2^j(\Phi([A_1]_{\mu_1})),\label{eq:mu_1 representation}
\end{equation}
for all $A_1\in\mathcal{O}(X_1)$.
Further for any $A_2\in\mathcal{O}(X_2)$, we have
\begin{align*}
     \sum_{j=1}^nT_j^*\mu_2^j(A_2)T_j=\sum_{j=1}^nT_j^*\mu_1^j\left(\Phi^{-1}([A_2]_{\mu_2})\right)T_j=\mu_1\left(\Phi^{-1}([A_2]_{\mu_2})\right)=\mu_2(A_2).
\end{align*}
Subsequently, since $\mu_2$ is $C^*$-extreme in $\mathcal{P}_\h(X_2)$, there exists an unitary operator $U_j\in\bh$ such that $\mu_2(\cdot)=U_j^*\mu_2^j(\cdot)U_j$ for each $j$. It then follows for all $A_1\in\mathcal{O}(X_1)$, that
\begin{align*}
    \mu_1(A_1)=\mu_2(\Phi([A_1]_{\mu_1}))=U_j^*\mu_2^j(\Phi([A_1]_{\mu_1}))U_j=U_j^*\mu_1^j(A_1)U_j,
\end{align*}
 where the last equality is due to   \eqref{eq:mu_1 representation}. This proves that $\mu_1$ is unitarily equivalent to each $\mu_1^j$ which consequently implies that $\mu_1$ is $C^*$-extreme in $\mathcal{P}_\h(X_1)$. That $\mu_1$ is extreme if and only if $\mu_2$ is extreme follows similarly.
\end{proof}

In the proof of part (2) of the theorem above, we observed the following:
\begin{proposition}
Let $\mu_i:\mathcal{O}(X_i)\to\bh$, $i=1,2$ be two measure isomorphic POVMs with Boolean isomorphism $\Phi:\Sigma(\mu_1)\to\Sigma(\mu_2)$. Then  $A_1$ is an atom for $\mu_1$ if and only if any representative of $\Phi([A_1]_{\mu_1})$ is an atom for $\mu_2$.
\end{proposition}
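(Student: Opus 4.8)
The plan is to extract this statement directly from the argument already given for Theorem~\ref{some similar properties of isomorphic POVM}(2), supplementing it with a symmetry observation so that both implications are covered. First I would note that measure isomorphism is symmetric: if $\Phi\colon\Sigma(\mu_1)\to\Sigma(\mu_2)$ is a Boolean isomorphism with $\mu_1(A_1)=\mu_2\bigl(\Phi([A_1]_{\mu_1})\bigr)$ for all $A_1\in\mathcal{O}(X_1)$, then $\Phi^{-1}\colon\Sigma(\mu_2)\to\Sigma(\mu_1)$ is again a Boolean isomorphism and satisfies $\mu_2(A_2)=\mu_1\bigl(\Phi^{-1}([A_2]_{\mu_2})\bigr)$ for all $A_2\in\mathcal{O}(X_2)$. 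Moreover, because $\Phi$ is a bijection, the assertion ``$A_2$ is a representative of $\Phi([A_1]_{\mu_1})$'' is equivalent to $[A_1]_{\mu_1}=\Phi^{-1}([A_2]_{\mu_2})$, i.e. to ``$A_1$ is a representative of $\Phi^{-1}([A_2]_{\mu_2})$''. Hence it suffices to establish the single implication that if $A_1$ is an atom for $\mu_1$ then every representative of $\Phi([A_1]_{\mu_1})$ is an atom for $\mu_2$, and then to apply this with the roles of $(\mu_1,\Phi)$ and $(\mu_2,\Phi^{-1})$ interchanged to obtain the converse.

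For that one implication I would reproduce the computation from the proof of Theorem~\ref{some similar properties of isomorphic POVM}(2): fix an atom $A_1$ for $\mu_1$ and a representative $A_2$ of $\Phi([A_1]_{\mu_1})$, so that $\mu_2(A_2)=\mu_1(A_1)\neq 0$. Given a measurable $A_2'\subseteq A_2$, pick $A_1'\in\Phi^{-1}([A_2']_{\mu_2})$; since $\Phi$ preserves intersections and $[A_2'\cap A_2]_{\mu_2}=[A_2']_{\mu_2}$, one gets $\Phi([A_1'\cap A_1]_{\mu_1})=[A_2']_{\mu_2}=\Phi([A_1']_{\mu_1})$, hence $[A_1'\cap A_1]_{\mu_1}=[A_1']_{\mu_1}$, and therefore $\mu_1(A_1'\cap A_1)=\mu_1(A_1')$. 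Because $A_1$ is an atom, the left-hand side is either $0$ or $\mu_1(A_1)$, so $\mu_1(A_1')\in\{0,\mu_1(A_1)\}$, and transporting back through the isomorphism yields $\mu_2(A_2')\in\{0,\mu_2(A_2)\}$. This shows that $A_2$ is an atom for $\mu_2$.

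Since all of this already appears, essentially verbatim, inside the proof of Theorem~\ref{some similar properties of isomorphic POVM}, there is no genuine obstacle; the only point deserving a moment's attention is the bookkeeping of representatives, namely checking that $\Phi$ and $\Phi^{-1}$ are mutually inverse on $\Sigma(\mu_1)$ and $\Sigma(\mu_2)$ so that the symmetry step is legitimate, and that a representative chosen for $\Phi([A_1]_{\mu_1})$ and then for the $\Phi^{-1}$-image of its class lands back in $[A_1]_{\mu_1}$. Both are immediate from the definition of a Boolean isomorphism and the well-definedness of $\tilde\mu_i$ on $\Sigma(\mu_i)$.
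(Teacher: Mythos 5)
Your proposal is correct and follows essentially the same route as the paper, which itself notes that the proposition is exactly the claim established inside the proof of part (2) of Theorem \ref{some similar properties of isomorphic POVM}, with the converse handled by the symmetry of the measure isomorphism (applying the same argument to $\Phi^{-1}$). Your added bookkeeping about representatives and the inverse isomorphism is sound and matches the paper's implicit use of symmetry.
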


 Let $\mu:\ox\to\bh$ be a POVM. We say $\mu$ is {\em countably generated} if there exists a countable collection of subsets $\mathcal{F}\subseteq\ox$ such that for any $A\in\ox$, there exists $B\in\sigma(\mathcal{F})$ satisfying $[A]_\mu=[B]_\mu$. Here $\sigma(\mathcal{F})$ denotes the $\sigma$-algebra generated by $\mathcal{F}$. The following result has been borrowed from \cite{Beukema}.

\begin{theorem}\label{a povm is isomorphic to a povm on Borel sigma algebra of [0,1]}(Proposition 59, \cite{Beukema})
If $\mu:\ox\to\bh$ is a countably generated POVM, then $\mu$ is measure isomorphic to a POVM $\nu:\mathcal{O}([0,1])\to\bh$.
\end{theorem}

Recall that when $X$
is a separable metric space, then $\ox$ is its Borel $\sigma$-algebra and in this case, any POVM on $X$ is countably generated.
What the theorem above basically says is that, to study $\cst$-extreme points in $\px$ for a separable metric space $X$, it is sufficient to just characterize the $\cst$-extreme points in $\p_\h([0,1])$ in view of Theorem \ref{some similar properties of isomorphic POVM}. This result will also help us find an example (see Example \ref{existence of a non-homomorphic C^*-extreme points on an uncountable metric space}) of a $\cst$-extreme point in $\px$ which is not spectral, when $\h$ is infinite dimensional.

Now we consider  measure isomorphism of POVMs induced from a
bimeasurable map. Recall that for measurable spaces $X_1$ and $X_2$, a function $f:X_1\to X_2$ is called \emph{measurable} if $f^{-1}(A_2)\in\mathcal{O}(X_1)$ whenever $A_2\in\mathcal{O}(X_2)$.
Note that for any measurable space $X $ and a measurable subset $Y\subseteq X$, $Y$
 itself inherits the natural measurable space structure from $X$ with  the $\sigma$ algebra $\{A\cap Y; A\in\ox\}$.

\begin{theorem}\label{Borel isomorphism preserve same properties}
For  $i=1,2$, let $X_i$ be two measurable spaces and let
$Y_i\subseteq X_i$ be measurable subsets. Let $f:Y_1\to Y_2$ be a
bijective map such that both $f$ and $f^{-1}$ are measurable. Given
a normalized POVM $\mu_1:\mathcal{O}(X_1)\to\bh$ satisfying
$\mu_1(A_1)=\mu_1\left(A_1\cap Y_1\right)$ for all
$A_1\in\mathcal{O}(X_1)$, define $\mu_2:\mathcal{O}(X_2)\to\bh$ by
$\mu_2(A_2)=\mu_1\left(f^{-1}(A_2\cap Y_2)\right)$ for all
$A_2\in\mathcal{O}(X_2)$. Then $\mu _1$ and $\mu _2$ are measure
isomorphic.
\end{theorem}
\begin{proof}
We claim that the map $\Phi:\Sigma(\mu_1)\to\Sigma(\mu_2)$ defined
by
\begin{align}\label{eq:definition of Phi}
    \Phi([A_1]_{\mu_1})=[f(A_1\cap Y_1)]_{\mu_2}~~\text{for all }A_1\in\mathcal{O}(X_1),
\end{align}
is a Boolean isomorphism. First note that
\begin{align}\label{eq: mu_1 expressed in terme of mu_2}
\mu_1(A_1)=\mu_1(A_1\cap Y_1)=\mu_1\left(f^{-1}\left(f(A_1\cap Y_1)\right)\right)=\mu_2\left(f(A_1\cap Y_1)\right)
\end{align}
for all $A_1\in\mathcal{O}(X_1)$. This implies that $\mu_1(A_1)=0$ if and only if $\mu_2(f(A_1\cap Y_1))=0$ for any $A_1\in\mathcal{O}(X_1)$. Therefore if $[A_1]_{\mu_1}=[A_1']_{\mu_1}$ for some $A_1,A_1'\in\mathcal{O}(X_1)$, then $[f(A_1\cap Y_1)]_{\mu_2}=[f(A_1'\cap Y_1)]_{\mu_2}$. This proves the well-definedness of $\Phi$. Similarly by symmetry, we prove that $\Phi$ is injective. That $\Phi$ is onto is straightforward by noting that
$$\Phi\left([f^{-1}(A_2\cap Y_2)]_{\mu_1}\right)=[A_2\cap Y_2]_{\mu_2}=[A_2]_{\mu_2}$$
for any $A_2\in\mathcal{O}(X_2)$.   This shows that $\Phi$ is a Boolean isomorphism as claimed. Further from \eqref{eq:definition of Phi} and \eqref{eq: mu_1 expressed in terme of mu_2}, we have
$$\mu_2(\Phi([A_1]_{\mu_1}))=\mu_2(f(A_1\cap Y_1))=\mu_1(A_1)$$
for any $A_1\in\mathcal{O}(X_1)$.
Thus we conclude that $\mu_1$ and $\mu_2$ are measure isomorphic.
\end{proof}

Now we apply these results to the study of $C^*$-extreme POVMs.
Consider the  map $g:[0,1)\to\T$ given by
$g(t)=e^{2\pi it}$ for $t\in[0,1)$, where $\T$ is the unit circle.
It is clear that $g$ is a bijective map  such that both $g$ and $g^{-1}$ are Borel measurable.
Therefore for any Hilbert space $\h$, normalized POVMs $\mu\in \p_\h([0,1])$ with
$\mu(\{1\})=0$ are in one-to-one correspondence with $\p_\h(\T)$ through measure isomorphism, by  Theorem  \ref{Borel isomorphism preserve
same properties}. In particular, since singletons under non-atomic POVMs have zero measure, it follows that non-atomic POVMs in $\p_\h([0,1])$ are measure isomorphic to non-atomic POVMs in $\p_\h(\T)$.

Next if $X$ is a separable metric space,
then non-atomic POVMs in $\px$ are measure isomorphic to  non-atomic
POVMs in $\p_\h([0,1])$ from Theorem \ref{a povm is isomorphic to a povm on Borel sigma algebra of [0,1]} and Theorem \ref{some similar properties of isomorphic POVM}, which in turn are measure isomorphic to
non-atomic POVMs in $\p_\h(\T)$ as seen above. Thus we conclude in  view of Theorem \ref{some similar properties of isomorphic POVM} that, characterizing the non-atomic
$\cst$-extreme points in $\px$ is equivalent to characterizing
non-atomic $\cst$-extreme points in $\p_\h([0,1])$ or $\p_\h(\T)$. Also we already know the structure of atomic $\cst$-extreme points. Therefore what we observed from the discussion above and Corollary \ref{iff criterion for C*-extreme points via atomic and non atomic povm} is that, to characterize  $\cst$-extreme points of $\px$, it is enough to understand the behaviour of $\cst$-extreme points of $\p_\h([0,1])$ or $\p_\h(\T)$.

\section{POVMs on Topological Spaces}\label{povm on topological space}

The results presented  in this article so far have been for POVMs on general
measurable spaces. Our attention now shifts toward the particular case
of topological spaces. For the whole section, we assume that $X$ is a Hausdorff topological space. As mentioned earlier, in this case $\ox$  will  denote the Borel $\sigma$-algebra of $X$.

\subsection{Regular POVMs}
An additional property of a POVM that can be studied when $X$ is a
topological space, is that of regularity. The assumption of
regularity shall be useful once we discuss the correspondence
between POVMs and completely positive maps in Section 7. Recall that a positive measure $\lambda$ is regular if  it is inner regular (or tight) with respect to  compact
subsets and outer regular  with respect to open subsets:
\begin{align*}
    \lambda(A)&=\sup\{\lambda(E): E\text{  compact with } E\subseteq A\}\\
    &=\inf\{\lambda(G): G\text{ open with
    }A\subseteq G\},
\end{align*}
for every $A\in\ox$.

\begin{definition}\label{definition of regularity}
A POVM $\mu:\ox\to\bh$ on a topological space $X$ is said to be {\em regular}
if $\mu_{h,h}$ as defined in equation
\eqref{eq:notation for mu_h,k}, is a regular positive measure  for each $h\in \h$.
\end{definition}

The issue of regularity does not arise
for complete separable metric spaces (Theorem 3.2, \cite{KRP3}), as all Borel measures are automatically regular.

The following lemma says that regularity is
preserved under the minimal Naimark dilation.

\begin{lemma}\label{mu is regular iff pi is regular}
Let  $\mu:\ox\to\bh$ be a POVM with the minimal Naimark dilation
$(\pi, V,\hpi)$. Then $\mu$ is regular if and only if $\pi$ is
regular.
\end{lemma}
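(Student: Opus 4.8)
The plan is to exploit the fundamental relation $\mu_{h,h}(A) = \langle h, V^*\pi(A)Vh\rangle = \langle Vh, \pi(A)Vh\rangle = \pi_{Vh,Vh}(A)$ for every $h \in \h$ and $A \in \ox$. This immediately reduces regularity of $\mu$ to a statement about a restricted family of the measures $\pi_{k,k}$, namely those with $k \in \ran(V) \subseteq \hpi$. So one direction is essentially free: if $\pi$ is regular, then $\pi_{k,k}$ is a regular positive measure for every $k \in \hpi$, in particular for $k = Vh$, hence $\mu_{h,h} = \pi_{Vh,Vh}$ is regular for all $h$, i.e. $\mu$ is regular.

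For the converse I would argue as follows. Assume $\mu$ is regular, so $\pi_{Vh,Vh}$ is regular for every $h \in \h$. The first step is to upgrade this to regularity of $\pi_{k,k}$ for all $k$ in the dense subspace $\mathcal{D} := \Span\{\pi(B)Vh : B \in \ox, h \in \h\}$ that generates $\hpi$ by minimality. For a single term $k = \pi(B)Vh$, since $\pi$ is spectral one computes $\pi_{k,k}(A) = \langle \pi(B)Vh, \pi(A)\pi(B)Vh\rangle = \langle Vh, \pi(A \cap B)Vh\rangle = \mu_{h,h}(A \cap B)$, which is the measure $A \mapsto \mu_{h,h}(A \cap B)$; this is absolutely continuous with respect to the regular measure $\mu_{h,h}$ (with bounded Radon--Nikodym derivative $\chi_B$), and a positive measure dominated by a regular positive measure is again regular, so $\pi_{k,k}$ is regular for such $k$. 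The second step is to handle a general $k = \sum_{j=1}^n c_j\, \pi(B_j)Vh_j \in \mathcal{D}$: expanding $\pi_{k,k}(A) = \sum_{i,j} \bar c_i c_j \langle \pi(B_i)Vh_i, \pi(A)\pi(B_j)Vh_j\rangle$ gives a finite combination of the complex measures $\mu_{h_i,h_j}(A \cap B_i \cap B_j)$, each absolutely continuous with respect to the regular finite positive measure $\sum_i \mu_{h_i,h_i}$ (using that each $|\mu_{h_i,h_j}|$ is dominated by a multiple of $\mu_{h_i,h_i}^{1/2}\mu_{h_j,h_j}^{1/2}$, hence by $\mu_{h_i,h_i} + \mu_{h_j,h_j}$ up to a constant); since $\pi_{k,k}$ is a nonnegative measure absolutely continuous with respect to a regular finite positive measure, it is regular.

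The final step is a density/approximation argument to pass from $k \in \mathcal{D}$ to arbitrary $k \in \hpi$. Given $k \in \hpi$ and $\varepsilon > 0$, pick $k' \in \mathcal{D}$ with $\|k - k'\| < \varepsilon$. For any $A \in \ox$ one has $|\pi_{k,k}(A) - \pi_{k',k'}(A)| \le \|\pi(A)(k - k')\|\,\|k\| + \|\pi(A)k'\|\,\|k - k'\| \le \varepsilon(\|k\| + \|k'\|)$, so the total variation of $\pi_{k,k} - \pi_{k',k'}$ is at most $C\varepsilon$ uniformly in $A$; regularity of $\pi_{k',k'}$ plus this uniform estimate gives, for any $A$, a compact $E \subseteq A$ and open $G \supseteq A$ with $\pi_{k,k}(A \setminus E) $ and $\pi_{k,k}(G \setminus A)$ both small. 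Hence $\pi_{k,k}$ is regular for every $k \in \hpi$, which is precisely the regularity of $\pi$.

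I expect the main obstacle to be the bookkeeping in the second and third steps: controlling the off-diagonal measures $\mu_{h_i,h_j}$ (which are complex, not positive) via the diagonal ones, and making the approximation estimate genuinely uniform in $A \in \ox$ so that it cooperates with the compact/open approximations coming from $\pi_{k',k'}$. None of this is deep, but it is the part where care is needed; everything else is a direct consequence of the identity $\mu_{h,h} = \pi_{Vh,Vh}$ and the spectral property $\pi(A)\pi(B) = \pi(A \cap B)$.
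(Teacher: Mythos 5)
Your proposal is correct and follows essentially the same route as the paper: the identity $\mu_{h,h}=\pi_{Vh,Vh}$ for the easy direction, the computation $\pi_{k,k}(A)=\mu_{h,h}(A\cap B)$ for $k=\pi(B)Vh$, and then a density/approximation argument over $[\pi(\ox)V\h]$ for the converse. The only difference is that you spell out the passage from single generators to finite linear combinations (via domination/absolute continuity with respect to $\sum_i\mu_{h_i,h_i}$), a step the paper's proof passes over with ``consequently,'' and your uniform total-variation estimate replaces the paper's inequality $\|\pi(O\setminus C)^{1/2}k\|^2\leq 2\langle k_n,\pi(O\setminus C)k_n\rangle+2\|k-k_n\|^2$; both are sound.
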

\begin{proof}
If $\pi$ is regular then, since $\mu_{h,h}=\pi_{Vh,Vh}$ for each
$h\in\h$, it is clear that $\mu$ is regular. For the converse,
assume that $\mu$ is regular. First note that, if $k=\pi(B)Vh$ for
some $B\in\ox,h\in \h$, then for any $A\in \ox ,$
\begin{align*}
    \pi_{k,k}(A)=\langle \pi(B)Vh,\pi(A)\pi(B)Vh\rangle=\langle h,V^*\pi(A)\pi(B)Vh\rangle=\mu_{h,h}(A\cap
    B).
\end{align*}
Since $A\mapsto \mu_{h,h}(A\cap B)$ is regular, it follows that
$\pi_{\pi(B)Vh,\pi(B)Vh}$ is regular. Consequently, $\pi_{k,k}$ is
regular for all $k\in \Span\{\pi(A)Vh: A\in\ox,h\in\h\}$. Now fix $\epsilon>0$ and $B\in\ox$. Then for
general $k\in\hpi$, let $\{k_0\}$ be  in $\Span\{\pi(A)Vh : A\in\ox,h\in\h\}$
such that
$$\|k-k_0\|<\sqrt{\epsilon}/2.$$
Since $\pi_{k_0,k_0}$ is
regular as shown above, there is a compact subset $C$ and an open subset $O$ with
$C\subseteq B\subseteq O$ such that $$\langle k_0,\pi(O\setminus
C)k_0\rangle<\epsilon/4.$$
Thus
\begin{align*}
    \langle k,\pi(O\setminus C)k\rangle&=\|\pi(O\setminus C)^{1/2}k\|^2
    \leq 2\|\pi(O\setminus C)^{1/2}k_0\|^2+2\|\pi(O\setminus C)^{1/2}(k_0-k)\|^2\\
    &\leq2\langle k_0,\pi(O\setminus C)k_0\rangle+2\|k_0-k\|^2
<2\left(\epsilon/4+\epsilon/4\right)=\epsilon.
\end{align*}
Since $\epsilon$ and $B$ are arbitrary, we conclude that $\pi_{k,k}$
is regular.
\end{proof}

\begin{remark}\label{regularity of T muT and nu leq mu}
If $\mu$ is a regular POVM, then it is  easy to check that
$T^*\mu(\cdot)T$ is also regular for any $T\in \bh$. Moreover, if
$\nu $ is a POVM such that $\nu\leq\mu ,$ then $\nu$ is also
regular.
\end{remark}

\subsection{Regular atomic and non-atomic POVMs}
We now discuss the structure of atomic and non-atomic regular POVMs.
Just like that in classical theory, we show that every atom for a
regular POVM is concentrated on a singleton up to a set of measure
$0$ and that every atomic regular POVM  is concentrated on a
countable subset. First step in that direction is the following
lemma.

\begin{lemma}\label{a PVM is dirac if it is scalar valued}
Let $\pi:\ox\to\B(\hpi)$ be a regular spectral measure satisfying
$\pi(A)=I_{\hpi}$ or $0$ for each $A\in\ox$ (here $\hpi $ could be non-separable). Then there exists a
unique $x\in X$ such that $\pi=\delta_{x}(\cdot)I_{\hpi}$, where $\delta_x$
denotes the Dirac measure concentrated at $x$.
\end{lemma}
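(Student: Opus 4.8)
The plan is to reduce the statement to the classical fact that a $\{0,1\}$-valued regular Borel measure on a Hausdorff space is a point mass, but to run the argument directly at the level of the projections $\pi(A)$, so as not to lose track of the operator $I_{\hpi}$. We may assume $\hpi\neq\{0\}$, since otherwise $I_{\hpi}=0$ and the assertion is vacuous. Fix a unit vector $h\in\hpi$; then $\mu_{h,h}$ is a regular positive measure with $\mu_{h,h}(X)=1$ which, because $\pi(A)\in\{0,I_{\hpi}\}$, takes only the values $0$ and $1$, and $\mu_{h,h}(A)>0$ is equivalent to $\pi(A)=I_{\hpi}$.

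First I would produce a point $x\in X$ such that $\pi(G)=I_{\hpi}$ for every open $G\ni x$; this is the step I expect to be the main obstacle, because for a general topological space the ``support'' of $\pi$ need not behave well, so one cannot simply intersect all closed sets of full measure. Instead I would use inner regularity: since $\mu_{h,h}(X)=1$, there is a \emph{compact} set $K$ with $\mu_{h,h}(K)>0$, hence $\pi(K)=I_{\hpi}$. Now suppose for contradiction that every $x\in X$ has an open neighbourhood $U_x$ with $\pi(U_x)=0$. The family $\{U_x\}_{x\in X}$ covers $K$, so by compactness finitely many of them, say $U_{x_1},\dots,U_{x_n}$, cover $K$; then by monotonicity and finite subadditivity of $\pi$ we get $I_{\hpi}=\pi(K)\le\sum_{i=1}^{n}\pi(U_{x_i})=0$, a contradiction. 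Hence such an $x$ exists.

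With $x$ fixed, I would next invoke outer regularity applied to the singleton $\{x\}$ (measurable by our standing assumption): $\mu_{h,h}(\{x\})$ is the infimum of $\mu_{h,h}(G)$ over open $G\supseteq\{x\}$, and each such $G$ is an open neighbourhood of $x$, so $\mu_{h,h}(G)=1$; therefore $\mu_{h,h}(\{x\})=1$, which forces $\pi(\{x\})=I_{\hpi}$. Monotonicity then gives the conclusion: if $x\in A$ then $I_{\hpi}=\pi(\{x\})\le\pi(A)\le I_{\hpi}$, so $\pi(A)=I_{\hpi}$, while if $x\notin A$ then $A\subseteq X\setminus\{x\}$, so $\pi(A)\le\pi(X\setminus\{x\})=I_{\hpi}-\pi(\{x\})=0$; in either case $\pi(A)=\delta_x(A)\,I_{\hpi}$. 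For uniqueness, if also $\pi=\delta_y(\cdot)I_{\hpi}$ with $y\neq x$, evaluating at $\{x\}$ gives $I_{\hpi}=\pi(\{x\})=\delta_y(\{x\})I_{\hpi}=0$, contradicting $\hpi\neq\{0\}$.
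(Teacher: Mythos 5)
Your proof is correct and follows essentially the same route as the paper's: reduce to the scalar $\{0,1\}$-valued regular measure $\mu_{h,h}$, use inner regularity to obtain a compact set of full measure, and combine a finite-cover compactness argument with outer regularity at singletons to locate the point $x$. The only difference is cosmetic — you first isolate $x$ by the covering contradiction and then apply outer regularity to $\{x\}$, whereas the paper assumes no point mass and applies outer regularity to the null singletons before invoking compactness.
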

 \begin{proof}
 For each $A\in \ox$, let $\lambda(A)=0$ or $1$ accordingly so that $\pi(A)=\lambda(A)I_{\hpi}$. Clearly $\lambda$ is a
 regular probability measure, as $\pi$ is regular (e.g. $\lambda=\pi_{h,h}$ for any unit vector $h\in \hpi$).
 %Since $\lambda$ is regular,
 Whence by inner regularity, there is a compact subset $C\subseteq X$ such that $\lambda(C)>0$ and thus, $\lambda(C)=1$.
 We claim to find an element $x\in C$ such that $\lambda=\delta_x$. Suppose this is not
  the case, then $\lambda(\{x\})=0$ for each $x\in C$ (otherwise, $\lambda(\{x\})=1=\lambda(C)$ for
   some $x$). Therefore it follows from outer regularity of $\lambda$, that  there is an open subset $E_x $ containing $x$ such that $\lambda(E_x)<1/2$ and thus, $\lambda(E_x)=0$. Since $\{E_x\}_{x\in C}$ is an open cover for the compact subset $C$, there exist finitely many points $x_1,\ldots,x_n\in C$ such that $C\subseteq\cup_{i=1}^nE_{x_i}$.
   But then we have $$\lambda(C)\leq\sum_{i=1}^n\lambda(E_{x_i})=0,$$
 leading us to a contradiction. Thus $\lambda=\delta_x$ for some $x\in X$ and hence $\pi=\delta_x(\cdot)I_{\hpi}$. The uniqueness is obvious as $\lambda
 (X)=\lambda(\{x\})=1.$
 \end{proof}

It is well-known  that the lemma above fails to be true  (even on
compact Hausdorff spaces) for finite positive measures, if we drop the
regularity assumption  (see Example 7.1.3, \cite{Bogachev}).

%\begin{remark}\label{seprability assumption is not required in sclar multiple of PVM}
%In Lemma \ref{a PVM is Dirac if it is scalar valued}, the Hilbert space $\hpi$ on which the spectral measure %$\pi$ acts is assumed to be separable. But the assertion remains true even when $\hpi$ is assumed to be %non-separable, as the proof doesn't use any separability condition. We use this observation in the next theorem.
%\end{remark}

The following theorem and the subsequent corollary give characterization of all  atomic and non-atomic regular
POVMs.

 \begin{theorem}\label{how atomic POVM looks like}
 Let $\mu:\ox\to\bh$ be an atomic regular POVM. Then there exists a countable subset $\{x_n\}$ of $X$ and positive operators $\{T_n\}$ in $\bh$ such that
 \begin{align}
     \mu(A)=\sum_n\delta_{x_n}(A)T_n
 \end{align}
for each $A\in\ox$.
\end{theorem}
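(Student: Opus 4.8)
The plan is to pass to a minimal Naimark dilation, where the POVM becomes a regular atomic \emph{spectral} measure, to show that every atom of that spectral measure is carried by a single point of $X$, and finally to use separability of $\hpi$ to pin down countably many such points.

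First I would fix a minimal Naimark dilation $(\pi,V,\hpi)$ of $\mu$. By Lemma \ref{mu is regular iff pi is regular} the spectral measure $\pi$ is again regular, and by Proposition \ref{mu is atomic iff pi is atomic} it is atomic. Hence it suffices to prove the corresponding statement for $\pi$: there is a countable set $\{x_n\}\subseteq X$ with $\pi(A)=\sum_n\delta_{x_n}(A)\,\pi(\{x_n\})$ for all $A\in\ox$. Applying $V^*(\cdot)V$ and setting $T_n=\mu(\{x_n\})=V^*\pi(\{x_n\})V\ge 0$ will then give the theorem, since the $T_n$ are visibly positive.

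Next, fix an atom $B$ of $\pi$. Since $\pi$ is a spectral measure it is multiplicative, so $\pi(B)\in\pi(\ox)'$ and $\h_B:=\ran\pi(B)$ is a reducing subspace; moreover for $h\in\h_B$ one has $\pi(A)h=\pi(A)\pi(B)h=\pi(A\cap B)h$, so the sub-spectral measure $\rho(A):=\pi(A)|_{\h_B}=\pi(A\cap B)|_{\h_B}$ has vector measures $\rho_{h,h}=\pi_{h,h}$, hence is regular. Because $B$ is an atom, $\rho$ takes only the values $0$ and $I_{\h_B}$ (and $\h_B\neq\{0\}$ as $\pi(B)\neq 0$), so Lemma \ref{a PVM is dirac if it is scalar valued} produces a unique $x_B\in X$ with $\rho(A)=\delta_{x_B}(A)\,I_{\h_B}$ for all $A$. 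Taking $A=B$ gives $x_B\in B$, and taking $A=\{x_B\}$ gives $\h_B\subseteq\ran\pi(\{x_B\})$, i.e. $0\neq\pi(B)\le\pi(\{x_B\})$.

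Then I would note that $\{\pi(\{x\}):x\in X\}$ is a family of pairwise orthogonal projections, since distinct singletons are disjoint and $\pi(\{x\})\pi(\{y\})=\pi(\{x\}\cap\{y\})=0$; by separability of $\hpi$ the set $S:=\{x\in X:\pi(\{x\})\neq 0\}$ is therefore countable, say $S=\{x_n\}$. If $\pi\bigl(X\setminus\bigcup_n\{x_n\}\bigr)\neq 0$, then atomicity yields an atom $B\subseteq X\setminus\bigcup_n\{x_n\}$, whence $x_B\in B$ satisfies $\pi(\{x_B\})\neq 0$, so $x_B\in S$ --- a contradiction. Thus $\pi(X\setminus S)=0$, and countable additivity gives $\pi(A)=\sum_n\pi(A\cap\{x_n\})=\sum_n\delta_{x_n}(A)\,\pi(\{x_n\})$ for every $A\in\ox$, completing the argument. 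The substantive step is the atom-to-point reduction in the third paragraph, which is exactly where regularity is indispensable: without it an atom of a spectral measure need not be a point mass. The remaining ingredients --- the dilation, orthogonality of distinct point masses, and the separability/atomicity argument --- are routine bookkeeping.
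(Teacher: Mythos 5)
Your proof is correct and follows essentially the same route as the paper's: pass to the minimal Naimark dilation, transfer regularity and atomicity to $\pi$, apply Lemma \ref{a PVM is dirac if it is scalar valued} to the restriction of $\pi$ to the range of an atom's projection, and use separability together with atomicity to obtain countability and concentration on the resulting points. The only difference is organizational: the paper decomposes over a Zorn-maximal disjoint family of atoms, whereas you index directly by the points with $\pi(\{x\})\neq 0$ and rule out mass outside them by the atom-to-point step, which avoids Zorn's lemma — a harmless, slightly cleaner variant of the same argument.
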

 \begin{proof}
 Let $(\pi,V,\hpi)$ be the minimal Naimark dilation for $\mu$. Since $\mu$ is atomic,
   $\pi$ is also atomic  by Proposition \ref{mu is atomic iff pi is atomic}. Also $\pi$ is regular
   by Lemma \ref{mu is regular iff pi is regular}. We claim that there exists a countable
   subset $\{x_n\}$ of $X$ and orthogonal projections $\{P_n\}$ on $\hpi$ such that $\pi(A)=\sum_n\delta_{x_n}(A)P_n$
   for all $A\in\ox$. Then the required assertion will follow by taking $T_n=V^*P_nV\in\bh$.

Let $\{B_i\}_{i\in I}$ be a maximal collection of mutually disjoint  atoms for $\pi$,
whose  existence is ensured  by  Zorn's lemma. Note that, since $\pi(B_i)\neq0$, we have $\mu(B_i)\neq 0$ for all $i\in I$ by Proposition \ref{zero sets of mu and pi are same}. Hence it follows from Remark \ref{a collection of disjoint sets with non zero measure must be countable}  that $I$ must be countable.  Furthermore  for each $A\in\ox$, we have
    \begin{align}\label{eq:pi is expressed using B_alpha}
        \pi(A)=\sum_{i\in I} \pi(A\cap B_i),
    \end{align}
otherwise there would exist an atom for $\pi$, say $ A_1\subseteq A\setminus (\cup_i(A\cap B_i))$ which is disjoint to each $B_i$, violating the maximality of the collection $\{B_i\}_{i\in I}$.

Now for each $i\in I$, we set $P_i=\pi(B_i)$ and $\h_i=\ran(P_i)$. Note that  each $\h_i$ is a reducing subspace for $\pi(A)$  for all $A\in\ox$
%as $\pi(A)$ and $\pi(B_i)$ commute
and therefore, the map $\pi_i:\ox\to\mathcal{B}(\h_i)$ given by
\begin{align*}
          \pi_i(A)=\pi(A\cap B_i)_{|_{\h_i}}=\pi(A)_{|_{\h_i}}~~\text{for all }A\in\ox,
\end{align*}
is a well defined regular spectral measure.  Also for  each $A\in\ox$, as $B_i$ is an atom for $\pi$, we have
$$\text{either }\pi(A\cap B_i)=0\text{ or } \pi(A\cap B_i)=\pi(B_i)$$
that is
\begin{align*}
      \text{either }  \pi_i(A)=0 \text { or }\pi_i(A)=I_{\h_i}.
\end{align*}
Therefore by Lemma \ref{a PVM is dirac if it is scalar valued}
there is an element   $x_i\in B_i$ such that
$\pi_i=\delta_{x_i}(\cdot)I_{\h_i}$. Equivalently for each $A\in\ox$, we have
\begin{align*}
    \pi(A)P_i=\delta_{x_i}(A)P_i
\end{align*}
and hence
 \eqref{eq:pi is expressed using B_alpha} yields
\begin{align*}
        \pi( A)=\sum_{i\in I}\pi(A\cap B_i)=\sum_{i\in I}\pi(A)\pi(B_i)=\sum_{i\in I}\delta_{x_i}(A)P_i.
\end{align*}
This shows our claim, completing the proof.
 \end{proof}

 \begin{corollary}\label{if and only if criteria for atomic and non-atomic measures}
Let $\mu:\ox\to\bh$ be a regular POVM. Then
\begin{enumerate}
    \item for any atom $B$ for $\mu$, there exists a (unique) $x\in B$ such that $\mu(B)=\mu(\{x\})$.
    \item \label{condition for atomicity}  $\mu$ is atomic if and only if there exists a countable subset $Y\subseteq X$ such that $\mu(Y)=\mu(X)$.
 \item\label{condition for non-atomicity}  $\mu$ is non-atomic if and only if $\mu(\{x\})=0$ for all $x\in X$.
\end{enumerate}
\end{corollary}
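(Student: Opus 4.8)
The plan is to reduce everything to Lemma~\ref{a PVM is dirac if it is scalar valued} and Theorem~\ref{how atomic POVM looks like}, proving (1) first since (3) relies on it.

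For (1), I would pass to a minimal Naimark dilation $(\pi,V,\hpi)$ of $\mu$. If $B$ is an atom of $\mu$, then $B$ is an atom of $\pi$ by Proposition~\ref{mu is atomic iff pi is atomic}, and $\pi$ is regular by Lemma~\ref{mu is regular iff pi is regular}. Since $\pi(B)$ commutes with every $\pi(A)$ (Proposition~\ref{prop:projection commutes with everything}), the subspace $\h_B:=\ran(\pi(B))$ reduces $\pi$, so $\pi_B(A):=\pi(A\cap B)_{|_{\h_B}}$ is a spectral measure on $\ox$ with values in $\B(\h_B)$. It is regular because $\pi_{B,k,k}(A)=\pi_{k,k}(A\cap B)$ and the right side is a regular measure (as observed in the proof of Lemma~\ref{mu is regular iff pi is regular}), and because $B$ is an atom for $\pi$ it takes only the values $0$ and $I_{\h_B}$. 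Lemma~\ref{a PVM is dirac if it is scalar valued} then produces a unique $x\in X$ with $\pi_B=\delta_x(\cdot)I_{\h_B}$; evaluating at $B$ forces $\delta_x(B)=1$, so $x\in B$. Since $\pi(A\cap B)\le\pi(B)$, the projection $\pi(A\cap B)$ is carried by $\h_B$, hence $\pi(A\cap B)=\delta_x(A)\pi(B)$ for all $A\in\ox$; compressing by $V$ gives $\mu(A\cap B)=\delta_x(A)\mu(B)$, and taking $A=\{x\}$ (measurable by our standing assumption) gives $\mu(\{x\})=\mu(B)$. For uniqueness, if $y\in B$ also satisfies $\mu(\{y\})=\mu(B)$, then $\mu(B\setminus\{y\})=0$, so $\pi(B\setminus\{y\})=0$ by Proposition~\ref{zero sets of mu and pi are same}; as $y\in B$ this gives $\pi(\{y\})=\pi(B)$, hence $\pi_B(\{y\})=I_{\h_B}$ and $\delta_x(\{y\})=1$, i.e.\ $y=x$.

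For (2), if $\mu$ is atomic I would invoke Theorem~\ref{how atomic POVM looks like} to write $\mu(A)=\sum_n\delta_{x_n}(A)T_n$ with $\{x_n\}$ countable and $T_n\ge0$; then $Y=\{x_n\}$, measurable as a countable union of singletons, satisfies $\mu(Y)=\sum_nT_n=\mu(X)$. Conversely, given a countable $Y$ with $\mu(Y)=\mu(X)$ and any $A$ with $\mu(A)\neq0$, one has $\mu(A\setminus Y)\le\mu(X\setminus Y)=\mu(X)-\mu(Y)=0$, so by countable additivity $\mu(A)=\mu(A\cap Y)=\sum_{y\in A\cap Y}\mu(\{y\})\neq0$; hence $\mu(\{y\})\neq0$ for some $y\in A\cap Y$, and the singleton $\{y\}$ is an atom of $\mu$ contained in $A$. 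Thus $\mu$ is atomic. For (3), a singleton $\{x\}$ with $\mu(\{x\})\neq0$ is automatically an atom, so non-atomicity of $\mu$ forces $\mu(\{x\})=0$ for all $x$; conversely, if $\mu(\{x\})=0$ for every $x$ and $\mu$ had an atom $B$, then part (1) would give $x\in B$ with $\mu(B)=\mu(\{x\})=0$, contradicting $\mu(B)\neq0$.

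The main obstacle is part (1): one cannot argue directly in $\bh$ because $\mu(B)$ need not be a projection for a general POVM, so the dilation is genuinely needed; the slightly fiddly points are verifying that the Dirac point supplied by Lemma~\ref{a PVM is dirac if it is scalar valued} actually lies in $B$ and establishing its uniqueness, for which Proposition~\ref{zero sets of mu and pi are same} is the clean tool. Regularity is used essentially throughout — it is exactly what makes an atom collapse to a point mass via Lemma~\ref{a PVM is dirac if it is scalar valued}, and it underlies Theorem~\ref{how atomic POVM looks like}.
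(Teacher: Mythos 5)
Your proof is correct and follows essentially the same route as the paper: part (1) is exactly the dilation--restriction argument (restrict $\pi$ to $\ran(\pi(B))$ and apply Lemma \ref{a PVM is dirac if it is scalar valued}) that the paper's proof simply cites as being ``ingrained'' in the proof of Theorem \ref{how atomic POVM looks like}, and your treatments of (2) and (3) coincide with the paper's, via Theorem \ref{how atomic POVM looks like} and part (1) respectively. The only addition is that you spell out the uniqueness of $x$ (via Proposition \ref{zero sets of mu and pi are same}), which the paper leaves implicit; that is a welcome but minor elaboration.
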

\begin{proof}
The proof of (1) is actually ingrained in the proof of Theorem \ref{how atomic POVM looks like}; if $B$ is an atomic subset for $\mu$, then it is atomic for $\pi$ and then we actually showed above that $\pi(B)=\pi(\{x\})$ for some $x\in B$. To prove \eqref{condition for atomicity}, first note that any POVM concentrated on a countable subset is atomic and hence the `if' part follows. The converse follows from  Theorem \ref{how atomic POVM looks like}, by taking $Y=\{x_n\}$.
The  `only if' of Part \eqref{condition for non-atomicity} is trivial. To prove  the `if' part of (3), since every atom is concentrated on a singleton by Part (1),
the hypothesis implies that $\mu$ has no atom, which is equivalent to saying that $\mu$ is non-atomic.
\end{proof}

\begin{corollary}\label{Cor:atomic/nonatomic are preservaed in direwct sum}
Let $\{\mu_n\}$ be a countable collection of regular POVMs and let $\mu=\oplus_n\mu_n$. Then $\mu$ is atomic (non-atomic) if and only if each $\mu_n$ is atomic (non-atomic).
\end{corollary}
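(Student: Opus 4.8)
The plan is to reduce everything to the singleton-mass characterizations already established in Corollary \ref{if and only if criteria for atomic and non-atomic measures}, namely that a regular POVM is atomic iff it is concentrated on a countable set, and non-atomic iff it kills every singleton. First I would record the elementary fact that $\mu = \oplus_n \mu_n$ is itself regular: for a unit vector $h = \oplus_n h_n \in \oplus_n \h_n$ we have $\mu_{h,h} = \sum_n (\mu_n)_{h_n,h_n}$, and a countable sum of regular positive measures (after normalizing, a countable convex combination of regular probability measures) is regular, by a routine $\epsilon/2^n$ argument splitting the inner- and outer-regularity approximations across the summands. Hence Corollary \ref{if and only if criteria for atomic and non-atomic measures} applies to $\mu$ as well as to each $\mu_n$.

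For the non-atomic case the argument is immediate: for any $x \in X$, $\mu(\{x\}) = \oplus_n \mu_n(\{x\})$, so $\mu(\{x\}) = 0$ if and only if $\mu_n(\{x\}) = 0$ for every $n$. By part (3) of Corollary \ref{if and only if criteria for atomic and non-atomic measures}, $\mu$ is non-atomic iff $\mu(\{x\}) = 0$ for all $x$, iff $\mu_n(\{x\}) = 0$ for all $x$ and all $n$, iff each $\mu_n$ is non-atomic.

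For the atomic case I would use part (2) of that corollary. If each $\mu_n$ is atomic, pick for each $n$ a countable $Y_n \subseteq X$ with $\mu_n(Y_n) = \mu_n(X) = I_{\h_n}$; then $Y = \bigcup_n Y_n$ is countable and $\mu(X \setminus Y) = \oplus_n \mu_n(X \setminus Y) \leq \oplus_n \mu_n(X \setminus Y_n) = 0$, so $\mu(Y) = \mu(X)$ and $\mu$ is atomic. Conversely, if $\mu$ is atomic there is a countable $Y$ with $\mu(Y) = \mu(X) = I$; restricting to the $n$-th summand gives $\mu_n(Y) = I_{\h_n} = \mu_n(X)$, so each $\mu_n$ is atomic by the same criterion. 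I do not anticipate a serious obstacle here; the only mildly delicate point is verifying regularity of the direct sum so that Corollary \ref{if and only if criteria for atomic and non-atomic measures} is legitimately available, and that is handled by the $\epsilon/2^n$ estimate mentioned above together with the observation that convergence of the series $\sum_n \mu_n(A)$ in SOT lets one pass to the limit in the regularity approximations.
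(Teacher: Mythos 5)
Your proof is correct and follows essentially the same route as the paper: both reduce the statement to the criteria of Corollary \ref{if and only if criteria for atomic and non-atomic measures}, using the countable-concentration characterization for atomicity and (in your case, explicitly) the vanishing-on-singletons characterization for non-atomicity. The only difference is that you also verify that $\oplus_n\mu_n$ is regular so that the corollary legitimately applies to it, a point the paper leaves implicit; your sketch of that verification is sound, provided the outer-regularity step is done by truncating to finitely many summands (a finite intersection of open sets) plus a small tail rather than intersecting infinitely many open sets.
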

\begin{proof}
We use Corollary \ref{if and only if criteria for atomic and non-atomic measures} to prove the assertions. If $\mu$ is atomic, then there is a countable subset $Y$ such that $\mu(Y)=\mu(X)$. In particular $\mu_n(Y)=\mu_n(X)$ for each $n$, which implies that $\mu_n$ is atomic. Conversely if each $\mu_n$ is atomic, then $\mu_n(Y_n)=\mu_n(X)$ for some countable subset $Y_n$. If $Y=\cup_n Y_n$, then $Y$ is countable and $\mu(Y)=\mu(X)$, concluding that $\mu$ is atomic. The equivalence of non-atomicity follows similarly.
\end{proof}

\subsection{Regular $\cst$-extreme POVMs} For any topological space $X$ and a Hilbert space $\h$, denote the collection of all regular normalized POVMs from $\ox$ to $\bh$ by $\rpx$. Note  that $\rpx\subseteq\px$ and $\rpx$ is itself a $\cst$-convex set in the sense that $$\sum_{i=1}^nT_i^*\mu_i(\cdot)T_i\in \rpx,$$
whenever $\mu_i\in\rpx$ and $T_i$'s are $\cst$-coefficients for
$1\leq i\leq n$. In a fashion similar to Definition \ref{definition
of C*-extreme points}, we can define  $\cst$-extreme points of
$\rpx$. The following proposition says that, for a regular
normalized POVM $\mu$, it does not matter whether we are considering
$\cst$-extremity of $\mu$ in $\rpx$ or in $\px$.

\begin{proposition}\label{a regular povm is C*-extreme in px iff it is in rpx}
Let $\mu:\ox\to\bh$ be a normalized regular POVM. Then $\mu$ is $\cst$-extreme (extreme) in $\px$ if and only if $\mu$ is $\cst$-extreme (extreme) in $\rpx$.
\end{proposition}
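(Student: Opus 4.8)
The plan is to show that, for a regular $\mu$, the proper $\cst$-convex decompositions of $\mu$ available inside $\px$ are exactly those available inside $\rpx$; the equivalence of $\cst$-extremity then follows formally, and the same idea handles ordinary extremity.

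One direction is trivial: since $\rpx\subseteq\px$, any proper $\cst$-convex combination $\mu=\sum_{i=1}^n T_i^*\mu_i(\cdot)T_i$ with every $\mu_i\in\rpx$ is \emph{a fortiori} such a combination with $\mu_i\in\px$; so $\cst$-extremity of $\mu$ in $\px$ immediately yields $\cst$-extremity of $\mu$ in $\rpx$ (and likewise for extreme points, using convex combinations $\mu=\tfrac12(\mu_1+\mu_2)$).

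For the converse I would argue that regularity of $\mu$ is inherited by the pieces of any decomposition. Given a proper $\cst$-convex combination $\mu=\sum_{i=1}^n T_i^*\mu_i(\cdot)T_i$ with $\mu_i\in\px$, positivity of each summand gives $T_j^*\mu_j(\cdot)T_j\le\mu$, so by Remark \ref{regularity of T muT and nu leq mu} the POVM $T_j^*\mu_j(\cdot)T_j$ is regular; since $T_j$ is invertible, $\mu_j=(T_j^{-1})^*\big(T_j^*\mu_j(\cdot)T_j\big)T_j^{-1}$ is again regular by the same remark, i.e.\ $\mu_j\in\rpx$. Now $\cst$-extremity of $\mu$ in $\rpx$ applies and each $\mu_i$ is unitarily equivalent to $\mu$, so $\mu$ is $\cst$-extreme in $\px$. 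For the extreme case, from $\mu=\tfrac12(\mu_1+\mu_2)$ with $\mu_i\in\px$ one has $\mu_j\le 2\mu$, and $2\mu$ is regular, so $\mu_j\in\rpx$, whence extremity of $\mu$ in $\rpx$ forces $\mu_1=\mu_2=\mu$.

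I do not expect any real obstacle here: the only substantive point is that regularity propagates from $\mu$ to the components of a (proper $\cst$-)convex decomposition by domination, and that invertibility of the $\cst$-coefficients is precisely what is needed to recover $\mu_i$ from $T_i^*\mu_i(\cdot)T_i$. Everything else is bookkeeping with Definitions \ref{definition of C*-convex sets} and \ref{definition of C*-extreme points}.
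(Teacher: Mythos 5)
Your proposal is correct and follows essentially the same route as the paper: the whole content is that any proper $\cst$-convex (or convex) decomposition of a regular $\mu$ inside $\px$ automatically has regular components, by applying Remark \ref{regularity of T muT and nu leq mu} first to $T_i^*\mu_i(\cdot)T_i\le\mu$ and then to $\mu_i=(T_i^{-1})^*\bigl(T_i^*\mu_i(\cdot)T_i\bigr)T_i^{-1}$, so the decompositions available in $\px$ and in $\rpx$ coincide. Your explicit treatment of the classical extreme case via $\mu_j\le 2\mu$ is a minor elaboration of what the paper leaves implicit, but the argument is the same.
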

\begin{proof}
 If we show that  every proper $\cst$-convex combination for $\mu$ in  $\px$ is also a proper $\cst$-convex combination in $\rpx$ and vice versa, then we are done.
So let  $\mu(\cdot)=\sum_{i=1}^nT_i^*\mu_i(\cdot)T_i$ be a proper $\cst$-convex combination in $\px$ for $\mu_i\in \px$.
%We claim that $\mu_i\in \rpx$.
Note that, since $T_i^*\mu_i(\cdot)T_i\leq \mu(\cdot)$ for each $i$, it follows from Remark \ref{regularity of T muT and nu leq mu} that $T_i^*\mu_i(\cdot)T_i$ is regular. Again by the same remark, since
\begin{align*}
    \mu_i(\cdot)=T^{*^{-1}}\left(T^*\mu_i(\cdot)T_i\right)T_i^{-1},
\end{align*}
it follows that $\mu_i$ is regular. Thus $\mu_i\in\rpx$, which shows that $\sum_{i=1}^nT_i^*\mu_i(\cdot)T_i$ is also a proper $\cst$-convex combination for $\mu$ in $\rpx$. Since $\rpx\subseteq\px$, the converse of the claim is immediate. The assertions about extreme points follow similarly.
\end{proof}

We have already seen the following result for countable measurable
spaces in Theorem \ref{atomic $C^*$-extreme points are PVM} without
the assumption of regularity. The extension to uncountable discrete spaces
requires regularity in a crucial way.

\begin{proposition}\label{regular C*-extreme on discrete spaces are PVM}
Let $X$ be a discrete (possibly uncountable) space. Then every regular POVM on $X$ is atomic.  Moreover, a normalized POVM in $\rpx$ is $\cst$-extreme if and
only if it is spectral.
\end{proposition}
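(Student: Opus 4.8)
The plan is to first establish that every $\mu\in\rpx$ is atomic, and then simply read off the $\cst$-extreme characterization from results already in hand. The engine behind atomicity is the elementary fact that in a discrete space the compact subsets are precisely the finite ones, so that inner regularity becomes an extremely strong constraint on each scalar measure $\mu_{h,h}$.

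For atomicity I would argue as follows. Fix $\mu\in\rpx$ and $h\in\h$; then $\mu_{h,h}$ is a finite regular positive measure, and since every compact subset of the discrete space $X$ is finite, inner regularity gives
\begin{equation*}
\mu_{h,h}(X)=\sup\Bigl\{\,\sum_{x\in E}\mu_{h,h}(\{x\}):E\subseteq X\text{ finite}\,\Bigr\}.
\end{equation*}
Hence $\sum_{x\in X}\mu_{h,h}(\{x\})=\mu_{h,h}(X)<\infty$, so $Y_h:=\{x\in X:\mu_{h,h}(\{x\})>0\}$ is countable and $\mu_{h,h}(X\setminus Y_h)=0$. Now pick an orthonormal basis $\{e_n\}_n$ of the separable space $\h$ and set $Y=\bigcup_n Y_{e_n}$, which is countable. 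Then $\langle e_n,\mu(X\setminus Y)e_n\rangle=\mu_{e_n,e_n}(X\setminus Y)=0$ for every $n$, and writing $T=\mu(X\setminus Y)^{1/2}$ we get $\|Te_n\|^2=0$ for all $n$, hence $T=0$ and $\mu(X\setminus Y)=0$, i.e. $\mu(Y)=\mu(X)$. By Corollary~\ref{if and only if criteria for atomic and non-atomic measures} (the characterization of atomicity) this shows $\mu$ is atomic, proving the first assertion.

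For the second assertion, let $\mu\in\rpx$. By Proposition~\ref{a regular povm is C*-extreme in px iff it is in rpx}, $\mu$ is $\cst$-extreme in $\rpx$ if and only if it is $\cst$-extreme in $\px$. Since $\mu$ is atomic by the first part, Theorem~\ref{atomic $C^*$-extreme points are PVM} then tells us that $\mu$ is $\cst$-extreme in $\px$ if and only if it is spectral (and, conversely, a spectral measure in $\rpx$ is $\cst$-extreme in any case by Corollary~\ref{PVM is C*-extreme} together with Proposition~\ref{a regular povm is C*-extreme in px iff it is in rpx}). Chaining these equivalences gives that $\mu\in\rpx$ is $\cst$-extreme if and only if it is spectral.

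The only genuinely delicate step is the atomicity argument — specifically, going from ``compact $=$ finite in a discrete space'' through the countability of $\{x:\mu_{h,h}(\{x\})>0\}$ and the separability of $\h$ to the conclusion $\mu(X\setminus Y)=0$. Once atomicity is secured, the $\cst$-extreme part is an immediate consequence of Proposition~\ref{a regular povm is C*-extreme in px iff it is in rpx} and Theorem~\ref{atomic $C^*$-extreme points are PVM}.
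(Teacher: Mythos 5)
Your proposal is correct and follows essentially the same route as the paper: use the fact that compact subsets of a discrete space are finite together with inner regularity to show each $\mu_{h,h}$ is concentrated on a countable set, pass to a countable union over an orthonormal basis of the separable $\h$ to get $\mu(X\setminus Y)=0$ for a countable $Y$ (hence atomicity), and then invoke Proposition~\ref{a regular povm is C*-extreme in px iff it is in rpx} and Theorem~\ref{atomic $C^*$-extreme points are PVM} for the $\cst$-extreme characterization. The only cosmetic difference is that the paper extracts the countable carrier of $\mu_{h,h}$ from an increasing sequence of compact (finite) sets of almost full measure, while you sum the point masses directly; your positivity argument via $\mu(X\setminus Y)^{1/2}e_n=0$ is a clean rendering of the same final step.
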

\begin{proof}
Firstly let $\lambda$ be a regular Borel positive measure on
$X.$ By regularity of $\lambda$, for each $n\in \N$ there is a compact subset $C_n$ such
that $\lambda(X\setminus C_n)<1/n$. Set $C=\cup_nC_n$. Since
$X$ is discrete, each of $C_n$ is a finite subset and hence $C$ is
countable. Note that
\begin{align*}
    \lambda(X\setminus C)\leq \lambda(X\setminus C_n)\leq 1/n,
\end{align*}
for each $n$ and hence, $\lambda(X\setminus C)=0$. This says that
every  regular Borel  positive measure on $X$ is concentrated
on a countable subset and so it is atomic.

Now let $\mu:\ox\to\bh$ be a regular POVM. Then as observed above,
 $\mu_{h,h}$ is concentrated on a countable subset for each $h\in \h$. Let
$\{h_n\}$ be an orthonormal basis for $\h.$ Then for each $n\in \N$, there are countable
subsets, say $B_n$ such that
$$\mu_{h_n,h_n}(X\setminus B_n)=0.$$
 Set $B=\cup_nB_n$, then for each $n\in \N$, we have
$$\mu_{h_n,h_n}(X\setminus B)\leq\mu_{h_n,h_n}(X\setminus B_n)=0.$$
Consequently for all $h\in\h$, we have
$$\mu_{h,h}(X\setminus B)=\langle h,\mu(X\setminus B)h\rangle=\sum_{n}|\langle h_n,h\rangle|^2\langle h_n,\mu(X\setminus B)h_n\rangle=0$$
and hence $\mu(X\setminus B)=0$. Since $B$ is countable, we conclude that $\mu$ is concentrated on a countable subset and so $\mu$ is atomic.  Thus if $\mu$ is a $\cst$-extreme point in $\rpx$, then it is  spectral by Theorem \ref{atomic $C^*$-extreme points are PVM}.
\end{proof}

\begin{remark}\label{C*-extreme point on one-point compactification of discrete space}
Let $\tilde{X}=X\cup\{\infty\}$ be the one-point compactification of
a discrete space $X$ and let $\mu:\mathcal{O}(\tilde{X})\to\bh$ be a
normalized regular POVM. Then the restriction $\mu_{|_{\ox}}$ of
$\mu$ to $\ox$ is also regular and hence concentrated on a countable
subset, as seen in  Proposition \ref{regular C*-extreme on discrete spaces are PVM}. In particular, $\mu$ itself
is concentrated on a countable subset and hence is atomic. Therefore,
we conclude from Theorem \ref{atomic $C^*$-extreme points are PVM} that  any regular normalized POVM on $\tilde{X}$ is
$\cst$-extreme  in $\mathcal{P}_\h(\tilde{X})$ if and only if
it is  spectral.
\end{remark}

\subsection{Topology on $\p_\h(X)$}
As earlier $X$ is a topological space and  $POVM_\h(X)$ denotes the
collection  of all POVMs from $\ox$ to $\bh$. Now we define a
topology on this set. We shall call this topology as `bounded-weak'
inspired from a topology defined on the collection of all completely
positive maps on a $\cst$-algebra with the same name.  The reason
for  this nomenclature  will be apparent in the next section.  We
have observed that the set $\px $ of normalized POVMs is a
$C^*$-convex set.  Our aim now is to show a Krein-Milman type
theorem for $\cst$-convexity in this topology on $\px$.

Let $\cbx$ denote the space of all bounded continuous functions on
$X$.
  Recall that $\mu_{h,k}$ is the complex measure as in \eqref{eq:notation for mu_h,k} for any POVM $\mu$.
  We define the topology by defining convergence of nets.

\begin{definition}\label{definition of bw topology on povm}
Given a net $\mu^i$ and $\mu$ in $POVM_\h(X)$, we say  $\mu^i\to\mu$
in $POVM_\h(X)$ in {\em bounded weak topology\/} if
\begin{align*}
    \int_Xfd\mu^i_{h,k}\to\int_X fd\mu_{h,k}
\end{align*}
 for all $f\in C_b(X)$ and $h,k\in\h$.
\end{definition}

Notice  that the topology on $POVM_\h(X)$ is the smallest
topology
 which makes the maps: $\mu\mapsto \int_X fd\mu_{h,k}$ from $POVM_\h(X)$ to $\C$, continuous for all $f\in C_b(X)$ and $h,k\in\h$. It is then  immediate to verify that, for a given $\mu\in POVM_\h(X)$, sets of the form
\begin{align}
    O=\left\{\nu\in POVM_\h(X); \left|\int_X f_id\nu_{h_i,k_i}-\int_X f_id\mu_{h_i,k_i}\right|<\epsilon, 1\leq i\leq n\right\},
\end{align}
where $f_i\in C_b(X)$, $h_i,k_i\in\h$ for $1\leq i\leq n$, $\epsilon
>0,$ form a basis around $\mu$ in $POVM_\h(X)$. The definition here reminds us the weak topology considered in classical probability
theory. Moreover, we shall see in Section \ref{application to completely positive maps} that this definition is directly connected to  the bounded
weak topology on  the collection of completely positive maps on a
commutative $\cst$-algebra.

It should be added here that one can define a topology on
$POVM_\h(X)$ in several ways.  For example,  for a net $\mu_i$ of
POVMs and a POVM $\mu$,
  we could define the convergence  $\mu_i\to\mu$ by saying
  that $\mu_i(A)\to\mu(A)$ in WOT (or $\sigma$-weak topology) for all $A\in \ox$. This topology is certainly stronger
  than the bounded weak topology defined above. This topology has been considered in \cite{Holevo}.
We could  have  also defined a topology just by considering
$C_c(X)$, the space of all compactly supported continuous functions,
instead of $\cbx$ in the definition. In this case, we would get a
weaker topology than we originally defined. Nevertheless in this
case, one can show along the lines of classical probability theory
that this topology agrees with bounded weak topology
 on $\px$ whenever $X$ is a locally compact Hausdorff space.

Our main focus for this topology is the set of normalized POVMs. In general,
the set $\px$ is not Hausdorff; for an example, one can consider the classically famous Dieudonn\'{e} measure $\lambda$ (which is  not regular) on the compact Hausdorff space $X=[0, \omega_1]$ equipped with order topology, where $\omega_1$ is the first uncountable ordinal (see Example 7.1.3, \cite{Bogachev}). One can show that $\int_X fd\lambda=f(\omega_1)=\int_Xfd\delta_{\omega_1}$ for all $f\in C_b(X)$ and hence the distinct elements $\lambda(\cdot) I_\h$ and $\delta_{\omega_1}(\cdot)I_\h$ in $\px$ are not separated by open subsets. However the topology restricted to $\rpx$ is Hausdorff whenever $X$ is a locally compact Hausdorff space, which is a consequence of uniqueness of regular Borel measures in Riesz-Markov theorem.

 \begin{remark}\label{X is compact iff rpx is compact}
 As in classical probability theory, for a locally compact Hausdorff space
 (more generally for completely regular space, see Lemma 8.9.2, \cite{Bogachev}), the set $\{\delta_x(\cdot)I_\h; x\in X\}$ is
 closed in $\rpx$ and   it is homeomorphic to $X.$ Using this or otherwise,
 one can show that $\rpx$ is compact if and only if $X$ is compact.
 \end{remark}

\subsection{A Krein-Milman type theorem}
 Now we move on to prove the main result of this section. It is well  known that, in a locally convex topological vector space,
 a convex compact set is the  closure of convex hull of its extreme points. This is known as Krein-Milman  theorem.
  We here establish a similar kind of result for $\cst$-convexity in the sense that $\px$ is the closure of $\cst$-convex hull of its $\cst$-extreme points. A Krein-Milman type theorem was proved in
   \cite{Farenick Plosker Smith} when $X$ is a compact Hausdorff  space and $\h$ is a finite dimensional Hilbert space. We generalize it to arbitrary topological spaces and arbitrary Hilbert spaces. Moreover, in our case
   the compactness of $\px$ is not required. We first consider the following proposition, whose proof
   follows the same argument as normally used in classical measure theory.
   We provide the proof for the
   sake of completeness.

\begin{proposition}\label{prop:normalized POVM with finite support are dense}
Let $X$ be a topological space and $\h$ a Hilbert space. Then the collection of all normalized
POVMs concentrated on finite subsets is dense in $\px$.
\end{proposition}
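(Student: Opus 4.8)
The plan is to verify density directly against the basic neighbourhoods of the bounded weak topology described just above. Fix $\mu\in\px$ and a basic neighbourhood
\[
O=\left\{\nu\in\px:\ \left|\int_X f_i\,d\nu_{h_i,k_i}-\int_X f_i\,d\mu_{h_i,k_i}\right|<\epsilon,\ 1\le i\le n\right\}
\]
of $\mu$, determined by $f_1,\dots,f_n\in C_b(X)$, vectors $h_1,k_1,\dots,h_n,k_n\in\h$ and $\epsilon>0$. Each $\mu_{h_i,k_i}$ is a finite complex measure, with $|\mu_{h_i,k_i}|(X)\le\|h_i\|\,\|k_i\|$ (Cauchy--Schwarz applied to $\mu(A)^{1/2}$); set $C=1+\max_{1\le i\le n}|\mu_{h_i,k_i}|(X)$. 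I want to produce a normalized POVM $\nu$ concentrated on a finite set with $\nu\in O$.

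The one point that needs care is that $X$ is merely a topological space, so the $f_i$ need not be uniformly continuous in any usable sense; the remedy is to bundle the finitely many test functions into the single continuous map $F=(f_1,\dots,f_n)\colon X\to\C^n$ and carry out the approximation inside its range. Concretely, $F(X)$ lies in the compact polydisc $K=\prod_{i=1}^n\{z\in\C:|z|\le\|f_i\|_\infty\}$, which I would partition into finitely many Borel pieces $R_1,\dots,R_m$ each of sup-diameter less than $\epsilon/C$. Put $B_j=F^{-1}(R_j)$; since $F$ is continuous, each $B_j\in\ox$, and the nonempty $B_j$ form a finite measurable partition of $X$. Pick $x_j\in B_j$ for each nonempty $B_j$ and define $\nu\colon\ox\to\bh$ by $\nu(A)=\sum_{j:\,x_j\in A}\mu(B_j)$. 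Then $\nu$ is a finite sum of POVMs of the form $\mu(B_j)\,\delta_{x_j}(\cdot)$, hence a POVM; it is concentrated on the finite set $\{x_j\}$; and $\nu(X)=\sum_j\mu(B_j)=\mu(X)=I_\h$, so $\nu\in\px$.

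It then remains to estimate, for each $i$,
\[
\int_X f_i\,d\nu_{h_i,k_i}-\int_X f_i\,d\mu_{h_i,k_i}=\sum_j\int_{B_j}\bigl(f_i(x_j)-f_i(x)\bigr)\,d\mu_{h_i,k_i}(x),
\]
where for $x\in B_j$ both $F(x)$ and $F(x_j)$ lie in $R_j$, so that $|f_i(x)-f_i(x_j)|<\epsilon/C$; hence the left-hand difference has absolute value at most $(\epsilon/C)\,|\mu_{h_i,k_i}|(X)<\epsilon$, i.e.\ $\nu\in O$. Since $O$ was an arbitrary basic neighbourhood of $\mu$, the normalized POVMs concentrated on finite sets are dense in $\px$. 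As flagged, the only real obstacle is trading ``uniform continuity on $X$'' for this pushforward-to-$\C^n$ device; the remaining manipulations mirror the classical proof that finitely supported probability measures are weakly dense among all probability measures.
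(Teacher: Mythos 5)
Your proof is correct and follows essentially the same route as the paper: both replace $\mu$ by $\nu=\sum_j\delta_{x_j}(\cdot)\mu(B_j)$ for a finite measurable partition $\{B_j\}$ of $X$ on which the test functions $f_1,\dots,f_n$ are nearly constant, and then estimate the error using the total variations $|\mu_{h_i,k_i}|(X)\leq\|h_i\|\,\|k_i\|$. Your pushforward map $F=(f_1,\dots,f_n)$ into the polydisc is simply a tidy way of producing the common partition that the paper obtains by uniformly approximating each $f_i$ by simple functions (neither argument needs any uniform continuity on $X$).
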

\begin{proof}
Let $\mu\in\px$, and $E$ be a typical open set in $\px$ containing $\mu$ of the form
\begin{align*}
    E=\left\{\nu\in\px; \left|\int_Xf_i d\nu_{h_i,k_i}-\int_X f_id\mu_{h_i,k_i}\right|<\epsilon, 1\leq i\leq n\right\},
\end{align*}
for some fixed $f_i\in\cbx$, $h_i,k_i\in\h$, $i=1,\ldots,n$ and
$\epsilon>0$. We shall obtain   an element in $E$ concentrated on a finite subset, which will imply the required result. Now for each $i\in \{1,\ldots,n\}$,  get simple
functions $g_i$ on $X$ satisfying
\begin{align*}
    \sup_{x\in X}|f_i(x)-g_i(x)|<\epsilon/2M,
\end{align*}
where $M$ is a positive constant with $M>\sup_i\|h_i\|\|k_i\|$. Since $g_i$'s are simple functions, there is a finite partition $\{A_{ij}\}$ of $X$ and scalars $\{c_{ij}\}\subseteq\C$ (where $j$ varies over some finite indexing set, say $\Lambda_i$  for each $ 1\leq i\leq n$)  such that
\begin{align*}
    g_i=\sum_{j\in \Lambda_i}c_{ij}\chi_{A_{ij}}
\end{align*}
for each $i$. Pick $x_{ij}\in A_{ij}$ and set
\begin{align*}
    \nu=\sum_{i=1}^n\sum_{j\in \Lambda_i}\delta_{x_{ij}}(\cdot)\mu(A_{ij}).
\end{align*}
It is clear that $\nu$ is a POVM concentrated on a finite subset. Also we have
$$\nu(X)=\sum_{i=1}^n\sum_{j\in \Lambda_i}\mu(A_{ij})=\mu(X)=I_\h,$$
 and hence $\nu$ is normalized. We claim that $\nu\in E$. Firstly note that
$$\int_X fd\nu=\sum_{i=1}^n\sum_{j\in \Lambda_i}f(x_{ij})\mu(A_{ij})$$
for  any bounded Borel measurable  function $f$ on $X$ (here
$\int_Xfd\nu\in \bh$ is the operator  satisfying  $\left\langle
h,\left(\int_Xfd\nu \right)k\right\rangle=\int_Xf\nu_{h,k}$ for all
$h,k\in \h$). Therefore for each $m\in \{1,\ldots,n\}$, we have
\begin{align*}
    \int_Xg_md\nu= \sum_{i=1}^n\sum_{j\in \Lambda_i}g_m(x_{ij})\mu(A_{ij})
   =\sum_{j\in \Lambda_m}c_{mj}\mu(A_{mj})=\int_Xg_md\mu.
\end{align*}
Thus we get the following:
\begin{align*}
    \left|\int_X f_i d\nu_{h_i,k_i}-\int_X f_i d\mu_{h_i,k_i}\right|&\leq  \left|\int_X f_i d\nu_{h_i,k_i}-\int_X g_i d\nu_{h_i,k_i}\right|
    +\left|\int_X g_i  d\nu_{h_i,k_i}-\int_X g_i d\mu_{h_i,k_i}\right|\\
    &\;\;\;\;\;\quad\quad+\left|\int_X g_i d\mu_{h_i,k_i}-\int_X f_i d\mu_{h_i,k_i}\right|\\
    &\leq\int_X|f_i-g_i|\; d|\nu_{h_i,k_i}|+\int_X|g_i-f_i|\; d|\mu_{h_i,k_i}|\\
    &\leq \left(\sup_{x\in X}|f_i(x)-g_i(x)|\right) \left(|\nu_{h_i,k_i}|(X)+|\mu_{h_i,k_i}|(X)\right)\\
    &\leq \left(\epsilon/2M\right)(2\|h_i\|\|k_i\|)\\
    &<\epsilon
\end{align*}
for $i=1,\ldots,n$, where $|\mu_{h_i,k_i}|$ and $|\nu_{h_i,k_i}|$ denote the total variation of the complex measures $\mu_{h_i,k_i}$ and $\nu_{h_i,k_i}$ respectively and we have used the fact that $|\mu_{h_i,k_i}|(X)\leq \|h_i\|\|k_i\|$, which is straightforward to verify.  It then follows that $\nu\in E,$ completing the
proof.
\end{proof}

\begin{definition}\label{definition of C*-convex hull}
For a given subset $\mathcal{M}$ of $\px$, the {\em $\cst$-convex hull} of $\mathcal{M}$ is the set defined by
\begin{align}
    \left\{\sum_{i=1}^nT_i^*\mu_i(\cdot)T_i: \; \mu_i\in \mathcal{M}, T_i\in \bh\;\text{ for } 1\leq i\leq n \text{ such that } \sum_{i=1}^nT_i^*T_i=I_\h\right\}.
\end{align}
\end{definition}

\begin{theorem}\label{Krein-Milman type theorem for POVM}(Krein-Milman type Theorem)
Let $X$ be a topological space and $\h$ a Hilbert space. Then the $C^*$-convex hull of
Dirac spectral measures (i.e. $\delta_x(\cdot)I_\h$ for $x\in X$) is dense  in $\px$. In
particular, the $C^*$-convex hull of all $C^*$-extreme points is
dense in $\px$.
\end{theorem}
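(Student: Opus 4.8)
The plan is to piggy-back on Proposition \ref{prop:normalized POVM with finite support are dense}: the normalized POVMs concentrated on finite sets are already dense in $\px$, so it suffices to show that every such POVM lies in the $\cst$-convex hull of the dirac spectral measures $\{\delta_x(\cdot)I_\h : x\in X\}$ (in the sense of Definition \ref{definition of C*-convex hull}). Once that is done, density of the $\cst$-convex hull of dirac spectral measures follows immediately, and the ``in particular'' clause drops out since each $\delta_x(\cdot)I_\h$, being a spectral measure, is a $\cst$-extreme point of $\px$ by Corollary \ref{PVM is C*-extreme}, so the $\cst$-convex hull of all $\cst$-extreme points contains the $\cst$-convex hull of the dirac spectral measures.

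First I would fix $\nu\in\px$ concentrated on a finite set, say on the distinct points $x_1,\dots,x_m$. Countable additivity together with $\nu$ being concentrated there gives $\nu(\cdot)=\sum_{j=1}^m\delta_{x_j}(\cdot)\,T_j$, where $T_j:=\nu(\{x_j\})\ge 0$ in $\bh$, and normalization gives $\sum_{j=1}^m T_j=I_\h$ (discard indices with $T_j=0$). Setting $S_j:=T_j^{1/2}$, the positive square root, one has $\sum_{j=1}^m S_j^*S_j=\sum_{j=1}^m T_j=I_\h$ and
\[
\sum_{j=1}^m S_j^*\bigl(\delta_{x_j}(\cdot)I_\h\bigr)S_j=\sum_{j=1}^m\delta_{x_j}(\cdot)\,S_j^2=\sum_{j=1}^m\delta_{x_j}(\cdot)\,T_j=\nu(\cdot).
\]
Since each $\delta_{x_j}(A)I_\h$ is a projection and $\delta_{x_j}(X)I_\h=I_\h$, the measures $\delta_{x_j}(\cdot)I_\h$ are dirac spectral measures, so this display exhibits $\nu$ as a $\cst$-convex combination of dirac spectral measures, i.e.\ $\nu$ lies in their $\cst$-convex hull. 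Combining with Proposition \ref{prop:normalized POVM with finite support are dense}, the $\cst$-convex hull of the dirac spectral measures contains a dense subset of $\px$, hence is dense; and as noted it is contained in the $\cst$-convex hull of all $\cst$-extreme points, which is therefore dense as well.

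I do not expect a genuine obstacle here, the substantive analytic work having already been carried out in Proposition \ref{prop:normalized POVM with finite support are dense}. The only points deserving a line of care are: verifying that a POVM concentrated on a finite set has the claimed finite representation with $\cst$-coefficient identity $\sum_j T_j=I_\h$, so that the positive square roots $T_j^{1/2}$ form legitimate $\cst$-coefficients (no invertibility is required for membership in the $\cst$-convex hull, in contrast with a \emph{proper} $\cst$-convex combination); and recalling that dirac measures times $I_\h$ are genuinely spectral, hence $\cst$-extreme by Corollary \ref{PVM is C*-extreme}, which legitimizes the passage from the dirac statement to the statement about $\cst$-extreme points.
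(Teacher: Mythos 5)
Your proposal is correct and follows essentially the same route as the paper: reduce to finitely supported normalized POVMs via Proposition \ref{prop:normalized POVM with finite support are dense}, write such a POVM as $\sum_j \delta_{x_j}(\cdot)T_j$ with $T_j=\nu(\{x_j\})$, and use the square roots $T_j^{1/2}$ as $\cst$-coefficients to exhibit it as a $\cst$-convex combination of dirac spectral measures. The concluding appeal to Corollary \ref{PVM is C*-extreme} for the ``in particular'' clause matches the paper's reasoning as well.
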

\begin{proof}
Fix $\mu\in\px$. By Proposition \ref{prop:normalized POVM with
finite support are dense}, there is  a net $\mu_i\in\px$ such that
$\mu_i\to\mu$ in $\px$ and each $\mu_i$ is concentrated on a finite
subset. Therefore if we show that each $\mu_i$ is in the $C^*$-convex
hull of Dirac spectral measures, then we are done. So
assume without loss of generality, that $\mu\in\px$  is concentrated on a finite subset, say $\{x_1,\ldots,x_n\}$. If $T_i=\mu(\{x_i\})$, then
it is immediate that
$$\mu=\sum_{i=1}^n\delta_{x_i}(\cdot)T_i.$$
Set $S_i={T_i}^{1/2}\in\bh$ for each $i$. Then
\begin{align*}
    \sum_{i=1}^nS_i^*S_i=\sum_{i=1}^nT_i=\mu(X)=I_\h
\end{align*}
and
\begin{align*}\mu (\cdot ) =\sum_{i=1}^n S_i^*\delta_{x_i}(\cdot )S_i,
\end{align*}
which confirms that $\mu$ is a $C^*$-convex combination of Dirac
spectral measures.
\end{proof}

It is obvious that Dirac spectral measures are regular.
Therefore, Theorem \ref{Krein-Milman type theorem for POVM} along with Proposition \ref{a regular
povm is C*-extreme in px iff it is in rpx} give us the following
version of Krein-Milman theorem for regular POVMs. Its usefulness
shall be apparent when we discuss unital completely positive maps in
the next section.

\begin{corollary}\label{krien-milman theorem for regular POVM}
Let $X$ be a topological space and $\h$ a Hilbert space. Then the $\cst$-convex hull of
all regular spectral measures (in particular, regular $\cst$-extreme points)  is dense in $\rpx$.
\end{corollary}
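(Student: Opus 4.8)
The plan is to deduce this as an immediate consequence of the Krein--Milman type theorem already established for $\px$ (Theorem \ref{Krein-Milman type theorem for POVM}), using only that $\rpx$ sits inside $\px$ as a $\cst$-convex subset carrying the subspace topology. First I would fix a regular normalized POVM $\mu\in\rpx$ and view it as an element of $\px$. Theorem \ref{Krein-Milman type theorem for POVM} then furnishes a net $\{\nu_\alpha\}$ lying in the $\cst$-convex hull of the dirac spectral measures $\delta_x(\cdot)I_\h$, $x\in X$, with $\nu_\alpha\to\mu$ in the bounded-weak topology of $\px$. (Indeed, in the proof of that theorem the approximants produced via Proposition \ref{prop:normalized POVM with finite support are dense} are literally finite $\cst$-convex combinations of the form $\sum_i S_i^*\delta_{x_i}(\cdot)S_i$.)

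The key observation is that each $\nu_\alpha$ already lies in $\rpx$: every dirac spectral measure $\delta_x(\cdot)I_\h$ is a regular spectral measure, and $\rpx$ is $\cst$-convex, so any $\cst$-convex combination of such measures is again a regular normalized POVM -- and, in fact, lies in the $\cst$-convex hull of the regular spectral measures. Since the topology on $\rpx$ is by definition the restriction of the bounded-weak topology of $\px$, and both $\nu_\alpha$ and $\mu$ lie in $\rpx$, the convergence $\nu_\alpha\to\mu$ also holds in $\rpx$. As $\mu\in\rpx$ was arbitrary, the $\cst$-convex hull of all regular spectral measures is dense in $\rpx$.

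For the parenthetical strengthening to regular $\cst$-extreme points, I would invoke Corollary \ref{PVM is C*-extreme}: every (regular) spectral measure is $\cst$-extreme in $\px$, hence, by Proposition \ref{a regular povm is C*-extreme in px iff it is in rpx}, it is $\cst$-extreme in $\rpx$. Consequently the $\cst$-convex hull of the regular $\cst$-extreme points contains the $\cst$-convex hull of the regular spectral measures, and is therefore dense in $\rpx$ as well. There is no real obstacle here beyond bookkeeping: the only points that need a moment's care are confirming that the approximating net supplied by Theorem \ref{Krein-Milman type theorem for POVM} is genuinely built out of dirac spectral measures (so that it lands inside $\rpx$) and that passing to the subspace topology does not affect convergence of nets; both are routine.
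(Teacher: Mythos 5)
Your argument is correct and follows essentially the same route as the paper: invoke Theorem \ref{Krein-Milman type theorem for POVM}, note that dirac spectral measures (and hence their $\cst$-convex combinations) are regular so the approximating net stays in $\rpx$ with its subspace topology, and use Corollary \ref{PVM is C*-extreme} together with Proposition \ref{a regular povm is C*-extreme in px iff it is in rpx} for the statement about regular $\cst$-extreme points. The paper states this more tersely, but the underlying reasoning is identical.
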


\section{Applications to Completely Positive Maps}\label{application to completely positive maps}

We now apply the results  we have obtained in previous sections for
POVMs, to the theory of completely positive maps on  unital
commutative $\cst$-algebras. That there is a strong relationship
between these two topics is folklore.

If $\A$ is a commutative unital  $\cst$-algebra then by
Gelfand-Naimark theorem, there is a compact Hausdorff space $X$ (called {\em spectrum} of $\A$) such
that $\A=C(X)$, the space of all continuous functions on $X$.
Therefore for the rest of this section, we assume that $X$ is a compact
Hausdorff space.

\subsection{Completely positive maps}

Like before let $\bh $ be the algebra of all bounded operators on a
Hilbert space $\h.$ For any $C^*$-algebra ${\mathcal A}$, a linear
map $\phi:{ \mathcal A}\to \bh $ is called {\em positive}  if $\phi(a)\geq 0$ in $\bh$ whenever  $a\geq 0$ in $\A$. The map $\phi$ is called a \emph{completely
positive (CP) map} if $\phi \otimes id _n: {\mathcal A}\otimes M_n\to \bh
\otimes M_n$ is a positive map for every $n\in {\mathbb N}$ (here,
$id _n$ stands for the identity map on $n\times n$ matrix algebra $M_n$).
The well-known Stinespring's theorem (Theorem 4.1, \cite{Paulsen})
%(\cite{Stinespring}  Theorem 3.6, Takesaki book)
ensures that, if $\phi:{\mathcal A}\to\bh$ is a
completely positive map, then there exists a triple $(\psi,V,\K)$
where $\K$ is a Hilbert space, $\psi:\A\to\bk$ is a unital
$*$-homomorphism and $V\in\B(\h,\K)$  such that
\begin{equation}\label{Stinespring dilation theorem}
\phi (a) =V^*\psi (a)V~~\text{for all } a\in {\mathcal A},
\end{equation} and
satisfies the minimality condition: $\K=[\psi ({\mathcal A })V\h]$.
%the closed space generated by the set $\psi(\cx)V\h$.
Moreover any such triple is unique up to unitary equivalence. In our
case, the algebra $C(X)$ being commutative, complete positivity of linear maps
on $C(X)$ is same as positivity (Theorem 3.11, \cite{Paulsen}).

\subsection{Correspondence between POVMs and CP maps}

Let $X$ be a compact Hausdorff space and $\h$ a Hilbert space. We now review the
correspondence between regular POVMs on $X$  and completely positive
maps on $\cx$ (see Chapter 4, \cite{Paulsen}). Because most of the
subsequent results hinge upon this correspondence, we give a
detailed description.

Given a regular POVM $\mu:\ox\to\bh$, consider
for any $f\in\cx$, the map $B_f:\h\times\h\to\C$ defined by
\begin{align*}
    B_f(h,k)=\int_{X}fd\mu_{h,k}~~\text{for all }h,k\in \h
\end{align*}
where $\mu_{h,k}$ denotes the  complex measure, as in
\eqref{eq:notation for mu_h,k}. It is straightforward to check that
$B_f$ is a sesquilinear form satisfying $\|B_f\|\leq
\|f\|\|\mu(X)\|$ and therefore, by Riesz Theorem (\emph{Theorem
II.2.2, \cite{Conway}}) we obtain a unique bounded operator, call
it $\phi_\mu(f)\in\bh$, satisfying $B_f(h,k)=\langle
h,\phi_\mu(f)k\rangle$. Further it is immediate that
$\phi_\mu(f)\geq0$ in $\bh$, whenever $f\geq0$ in $\cx$. Hence, the
induced map $\phi_\mu:C(X)\to\bh$  defines a completely positive map
via the assignment
\begin{align}
    \langle h,\phi_\mu(f)k\rangle=\int_X f d\mu_{h,k}~~~ \text{for all }f\in C(X)\text{ and }h,k\in\h.\label{eq:correspondece of phi and mu}
\end{align}
On the other hand, given a completely  positive map
$\phi:C(X)\to\bh$,  consider for each $h,k\in\h,$ the bounded linear
functional on $\cx:= f\mapsto \langle h,\phi(f)k\rangle$. Then by
the application of Riesz-Markov representation theorem, we obtain a
unique regular Borel measure  $\nu_{h,k}$ satisfying
\begin{align*}
    \langle h,\phi(f)k\rangle =\int_Xfd\nu_{h,k}~~\text{for all }f\in\cx
\end{align*}
and $\|\nu_{h,k}\|\leq \|\phi\|\|h\|\|k\|$. Now for each bounded Borel
measurable   function $g$, consider the map: $(h,k)\mapsto
\int_{X}gd\nu_{h,k}$ from $\h\times\h$ to $\C$, which is
sesquilinear as above and bounded by $\|\phi\|\|g\|.$ Hence again by
Riesz Theorem,  we obtain a unique bounded operator
$\tilde{\phi}(g)\in\bh$ satisfying
\begin{align}
    \langle h,\tilde{\phi}(g)k\rangle=\int_{X}gd\nu_{h,k}~~\text{for all }h,k\in\h.
\end{align}
Note that $\tilde{\phi}(g)\geq 0$ in $\bh$ whenever $g\geq0$ in B(X),
 the collection of all bounded Borel measurable   functions on $X$. In particular  for $A\in\ox$, if we set
\begin{align}
    \mu_\phi(A)=\tilde{\phi}(\chi_A),
\end{align}
where $\chi_A\in B(X)$ is the characteristic  function of the subset $A$,
then $\mu_\phi(A)$ is a positive operator in $\bh$  and satisfies
\begin{align*}
    \nu_{h,k}(A)=\langle h,\mu_\phi(A)k\rangle~~\mbox{for all }h,k\in\h.
\end{align*}
Because  $\nu_{h,h}$ is a regular Borel  positive measure for each $h\in \h$,
it is  immediate  that $\mu_\phi$ defines a regular POVM   which satisfies
the equality $\mu_\phi(X)={\phi}(1)$,
where $1$ denotes the
constant function 1 on $X$.

\begin{remark}
For any POVM (not necessarily regular)
$\mu$, one can define a completely  positive map $\phi$ satisfying
 $\eqref{eq:correspondece of phi and mu}$ in a similar way. However, the
regular measure $\mu_\phi$ corresponding to $\phi$ that we got
above, could significantly be different than the original $\mu$.
More precisely, there may exist more than one Borel
POVM on a compact Hausdorff space $X$ (certainly, non-metrizable), say $\mu_1$ and $\mu_2$, such that
$\int_Xfd\mu_1=\int_Xfd\mu_2$ for all $f\in \cx$ (see the discussion just before Remark \ref{X is compact iff rpx is compact}). Therefore to
maintain  uniqueness, we shall always assume the POVM to be regular
whenever we talk about the correspondence between a POVM and a
 completely positive map.
\end{remark}

The following theorem summarises some basic properties of this
correspondence. See (Proposition 4.5, \cite{Paulsen}), \cite{Hadwin}, \cite{Han Larson Liu} for some
discussions on this.

\begin{theorem}\label{thm:correspondece between POVM and cp maps}
Let $X$ be a compact Hausdorff space and let $\h$ be a  Hilbert
space. Then  the correspondence described above between $\bh $ valued regular POVMs on $X$ and completely positive maps on $\cx$,
satisfies the following:
\begin{enumerate}
 \item \label{eq: uniqueness of mu and phi} $\phi_{\mu_\phi}=\phi$ and $\mu_{\phi_\mu}=\mu$.
 \item\label{eq:expression of mu(X) and phi(1)} $\phi(1)=\mu_\phi(X)$.
 \item\label{eq:homomorphism and PVM equivalence}  $\mu$ is a projection valued measure if and only if $\phi_\mu$ is a $*$-homomorphism.
 \item\label{eq:sum of two phi and mu} $\phi_{\mu_1+\mu_2}=\phi_{\mu_1}+\phi_{\mu_2}$ and $\mu_{\phi_1+\phi_2}=\mu_{\phi_1}+\mu_{\phi_2}$.
 \item\label{eq:operator sum and multiplication} $\phi_{T^*\mu(\cdot)T}=T^*\phi_\mu(\cdot)T$ and $\mu_{T^*\phi(\cdot)T}=T^*\mu_\phi(\cdot)T$ for any $T\in\bh$.
\end{enumerate}
\end{theorem}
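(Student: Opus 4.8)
The plan is to read off all five items by unwinding the two assignments $\mu\mapsto\phi_\mu$ and $\phi\mapsto\mu_\phi$ as described in \eqref{eq:correspondece of phi and mu} and in the paragraphs preceding it, invoking at each step the relevant uniqueness clause: uniqueness of the operator representing a bounded sesquilinear form (Riesz's theorem, used to build $\phi_\mu(f)$ and $\tilde\phi(g)$) and uniqueness of the regular Borel measure representing a functional on $\cx$ (Riesz--Markov).

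First I would dispatch the routine items \eqref{eq: uniqueness of mu and phi}, \eqref{eq:expression of mu(X) and phi(1)}, \eqref{eq:sum of two phi and mu}, \eqref{eq:operator sum and multiplication}. For $\phi_{\mu_\phi}=\phi$: for $f\in\cx$ and $h,k\in\h$ one has $\langle h,\phi_{\mu_\phi}(f)k\rangle=\int_X f\,d(\mu_\phi)_{h,k}=\int_X f\,d\nu_{h,k}=\langle h,\phi(f)k\rangle$, where $\nu_{h,k}$ is the regular Borel complex measure attached to $\phi$. For $\mu_{\phi_\mu}=\mu$: the measures $(\mu_{\phi_\mu})_{h,k}$ and $\mu_{h,k}$ are both regular and both represent $f\mapsto\langle h,\phi_\mu(f)k\rangle$ on $\cx$, so they agree by Riesz--Markov, and evaluating at $\chi_A$ gives $\mu_{\phi_\mu}(A)=\mu(A)$. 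Item \eqref{eq:expression of mu(X) and phi(1)} is the case $f\equiv1$ (i.e.\ $A=X$). For \eqref{eq:operator sum and multiplication}, note $(T^*\mu(\cdot)T)_{h,k}=\mu_{Th,Tk}$, so $\langle h,\phi_{T^*\mu(\cdot)T}(f)k\rangle=\int_X f\,d\mu_{Th,Tk}=\langle Th,\phi_\mu(f)Tk\rangle=\langle h,T^*\phi_\mu(f)Tk\rangle$; item \eqref{eq:sum of two phi and mu} is the same computation using $(\mu_1+\mu_2)_{h,k}=(\mu_1)_{h,k}+(\mu_2)_{h,k}$. The companion ``$\mu$-side'' equalities in \eqref{eq:sum of two phi and mu} and \eqref{eq:operator sum and multiplication} then follow from \eqref{eq: uniqueness of mu and phi}: since $\mu\mapsto\phi_\mu$ is injective on regular POVMs, it suffices to check that the two sides of each identity have the same image, which is already proved.

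The only item with real content is \eqref{eq:homomorphism and PVM equivalence}. Suppose $\mu$ is projection valued, so $\mu(A\cap B)=\mu(A)\mu(B)$ for all $A,B\in\ox$; then the extension $\tilde\phi$ of $\phi_\mu$ to bounded Borel functions (integration against $\mu$) is multiplicative and $*$-preserving on \emph{simple} Borel functions, directly from this product formula and the self-adjointness of the $\mu(A)$. Since $\tilde\phi$ is norm continuous for the supremum norm, and since every $f\in\cx$ is a uniform limit of simple Borel functions on the compact space $X$, $\phi_\mu$ is multiplicative and $*$-preserving on $\cx$, i.e.\ a $*$-homomorphism. Conversely, if $\phi_\mu$ is a $*$-homomorphism, then the spectral theorem for $*$-representations of the commutative $\cst$-algebra $\cx$ (equivalently, the minimal Stinespring dilation of $\phi_\mu$ together with its uniqueness; if $\phi_\mu$ is non-unital one restricts to $\ran(\phi_\mu(1))$, which is a projection since $\phi_\mu(1)=\phi_\mu(1)^2=\phi_\mu(1)^*$) produces a regular spectral measure $E:\ox\to\bh$ with $\langle h,\phi_\mu(f)k\rangle=\int_X f\,dE_{h,k}$ for all $f\in\cx$. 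Comparing with \eqref{eq:correspondece of phi and mu} and using uniqueness in Riesz--Markov gives $E_{h,k}=\mu_{h,k}$, hence $\mu=\mu_{\phi_\mu}=E$ by \eqref{eq: uniqueness of mu and phi}, so $\mu$ is projection valued. The main obstacle is precisely this reverse implication: it is the one place where a dilation/spectral-theorem input together with the regularity hypothesis is genuinely used, and one must take care that the spectral measure produced there is regular and that it really coincides with $\mu_{\phi_\mu}$.
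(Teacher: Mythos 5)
Your proposal is correct, and items (1), (2), (4), (5) are handled exactly as in the paper (direct computation with the defining identity \eqref{eq:correspondece of phi and mu} plus Riesz--Markov uniqueness; the paper, like you, treats them as immediate). The genuine difference is in item (3), where you argue both implications by a different route. The paper's proof is bare-hands: assuming $\phi_\mu$ multiplicative, it applies Riesz--Markov uniqueness twice --- first to get $g\,d\mu_{h,k}=d\mu_{h,\phi_\mu(g)k}$ for $g\in C(X)$, then, testing against sets, to get $\chi_A\,d\mu_{h,k}=d\mu_{\mu(A)h,k}$ --- and reads off $\mu(A\cap B)=\mu(A)\mu(B)$; the converse is obtained ``by reversing the argument,'' so no dilation or spectral machinery enters at all. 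You instead prove the PVM $\Rightarrow$ homomorphism direction by uniform approximation of $f\in C(X)$ by simple Borel functions, using multiplicativity of $\tilde\phi$ on simple functions (from $\mu(A\cap B)=\mu(A)\mu(B)$) and norm continuity of $\tilde\phi$, and you prove the homomorphism $\Rightarrow$ PVM direction via the spectral theorem for representations of $C(X)$ (equivalently minimal Stinespring plus its uniqueness), identifying the resulting regular spectral measure with $\mu$ through Riesz--Markov. Both routes are sound; yours buys transparency by reducing each direction to a standard fact (bounded functional calculus on one side, the spectral theorem on the other), at the cost of importing heavier machinery and of the extra care you rightly flag --- regularity of the spectral measure produced and the non-unital case handled by cutting down to $\ran(\phi_\mu(1))$ --- while the paper's chain of measure identities is self-contained and symmetric, which is what lets it dispatch the second direction by reversal.
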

\begin{proof}
Part \eqref{eq: uniqueness of mu and phi} is just uniqueness of the
correspondence and part \eqref{eq:expression of mu(X) and phi(1)}
follows from the discussion above.
To show \eqref{eq:homomorphism and PVM equivalence}, first assume
that $\phi_\mu$ is a $*$-homomorphism. Then for all $f,g\in\cx$ and
$h,k\in\h$, we have
\begin{align*}
    \int_X fgd\mu_{h,k}=\langle h,\phi_\mu(fg)k\rangle=\langle h,\phi_\mu(f)\phi_\mu(g)k\rangle=\int_X fd\mu_{h,\phi_\mu(g)k}.
\end{align*}
Since $f\in\cx$ is arbitrary, it follows from uniqueness of regular Borel  measures in Riesz-Markov theorem that $gd\mu_{h,k}=d\mu_{h,\phi_\mu(g)k}$, as complex measures. Equivalently for any $A\in\ox$, we have
\begin{align*}
    \int_A gd\mu_{h,k}=\mu_{h,\phi_\mu(g)k}(A),
\end{align*}
that is
\begin{align*}
    \int_X g\chi_Ad\mu_{h,k}= \langle h,\mu(A)\phi_\mu(g)k\rangle=\langle\mu(A)h, \phi_\mu(g)k\rangle =\int_X gd\mu_{\mu(A)h,k}.
\end{align*}
Again, since $g\in\cx$ is arbitrary, we conclude  that $\chi_Ad\mu_{h,k}=d\mu_{\mu(A)h,k}$, as complex measures. Equivalently for any $B\in\ox$, we get
\begin{align*}
   \int_{X}\chi_{A\cap B}d\mu_{h,k}= \int_X\chi_A\chi_Bd\mu_{h,k}=\mu_{\mu(A)h,k}(B)=\langle \mu(A)h,\mu(B)k\rangle,
\end{align*}
which further implies
$$\langle h,\mu(A\cap B)k\rangle=\langle h,\mu(A)\mu(B)k\rangle.$$
Since  $h,k\in\h$ are arbitrary, we conclude that
$$\mu(A\cap B)=\mu(A)\mu(B)~~\text{for all }A,B\in\ox,$$
which shows that $\mu$ is a projection valued measure. The converse of the statement follows just by  reversing of the argument above.

Part \eqref{eq:sum of two phi and mu} directly follows  from the assignment in  \eqref{eq:correspondece of phi and mu}. To show part \eqref{eq:operator sum and multiplication}: let $T\in\bh$ and set $\nu(\cdot)=T^*\mu(\cdot)T$. For any $h,k\in\h$ and $B\in\ox$, then
\begin{align*}
   \langle h,\nu(B)k\rangle= \langle h,T^*\mu(B)Tk\rangle=\langle Th,\mu(B) Tk\rangle
\end{align*}
which equivalently says  $\nu_{h,k}=\mu_{Th,Tk}$, as complex measures. Therefore for any $f\in\cx$, we have
\begin{align*}
    \langle h,\phi_{\nu}(f)k\rangle=\int_Xfd\nu_{h,k}=\int_Xfd\mu_{Th,Tk}=\langle Th,\phi_\mu(f)Tk\rangle=\langle h,T^*\phi_\mu(f)Tk\rangle
\end{align*}
which proves that $\phi_{\nu}=T^*\phi_\mu(\cdot) T$. The other equality follows similarly.
\end{proof}

It is crucial  that for a compact Hausdorff space $X$, if $\mu$ is
a regular POVM with a Naimark  dilation $(\pi,V,\hpi)$ then
$(\phi_\pi,V,\hpi)$ is a Stinespring dilation for the corresponding
CP map $\phi_\mu$ (follows directly from part (5) of Theorem
\ref{thm:correspondece between POVM and cp maps}). Further,
minimality conditions match:
\begin{align}
    [\pi(\ox)V\h]=[\phi_\pi(C(X))V\h]
\end{align}
and therefore, the Stinespring dilation $\phi_\mu=V^*\phi_\pi(\cdot) V$ is
minimal  if and only if the Naimark dilation $\mu=V^*\pi(\cdot) V$ is
minimal. Here we have some additional technical properties of this
correspondence which are quite useful for us.

\begin{proposition}\label{prop:commutant of pi and phi_pi are same}
Let $X$ be a compact Hausdorff space and $\mu:\ox\to\bh$ a regular
POVM. Then $\mu(\ox)'=\phi_\mu(C(X))'$.
Moreover, $\mu(A)\in$ WOT-$\overline{\phi_\mu(C(X))}$ and
$\phi_\mu(f)\in$ WOT-$\cspan\mu(\ox)$ for all $A\in\ox$ and $f\in
C(X)$ and in particular, WOT-$\overline{\phi_\mu(\cx)}$=WOT-$\overline{\Span}{\mu(\ox)}$.
\end{proposition}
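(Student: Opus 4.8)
The plan is to prove the two ``approximation'' claims first and then read off the commutant equality and the span equality from them. Recall that $\phi_\mu$ is determined by $\langle h,\phi_\mu(f)k\rangle=\int_X f\,d\mu_{h,k}$ for $f\in\cx$, that $\|\phi_\mu(f)\|\le\|f\|_\infty\|\mu(X)\|$, and that $|\mu_{h,k}|(X)\le\|\mu(X)\|\,\|h\|\,\|k\|$ (Cauchy--Schwarz for the POVM). \textbf{Step 1: $\phi_\mu(\cx)\subseteq$ WOT-$\cspan\mu(\ox)$.} Fix $f\in\cx$ and $\epsilon>0$. Since $X$ is compact, $f(X)\subseteq\C$ is bounded; partition it into finitely many Borel pieces of diameter $<\epsilon$, pull them back to a finite Borel partition $\{A_j\}$ of $X$, and choose $c_j$ in the $j$-th piece, so that $g=\sum_j c_j\chi_{A_j}$ has $\|f-g\|_\infty<\epsilon$. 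For all $h,k\in\h$ one has $\int_X g\,d\mu_{h,k}=\sum_j c_j\mu_{h,k}(A_j)=\langle h,(\sum_j c_j\mu(A_j))k\rangle$, hence
\[
\bigl|\langle h,(\phi_\mu(f)-\sum_j c_j\mu(A_j))k\rangle\bigr|=\Bigl|\int_X(f-g)\,d\mu_{h,k}\Bigr|\le\|f-g\|_\infty\,\|\mu(X)\|\,\|h\|\,\|k\|.
\]
Taking the supremum over unit vectors, $\|\phi_\mu(f)-\sum_j c_j\mu(A_j)\|\le\|\mu(X)\|\,\epsilon$, with $\sum_j c_j\mu(A_j)\in\Span\mu(\ox)$; letting $\epsilon\to0$ shows $\phi_\mu(f)$ is a norm-limit, a fortiori a WOT-limit, of elements of $\Span\mu(\ox)$.

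\textbf{Step 2: $\mu(A)\in$ WOT-$\overline{\phi_\mu(\cx)}$ for every $A\in\ox$.} A basic WOT-neighbourhood of $\mu(A)$ is determined by vectors $h_1,k_1,\dots,h_n,k_n\in\h$ and $\epsilon>0$. By polarization each $\mu_{h_i,k_i}$ is a $\C$-linear combination of positive regular measures of the form $\mu_{g,g}$, so $|\mu_{h_i,k_i}|$ is dominated by a finite regular positive measure; hence for each $i$ there are a compact $K_i$ and an open $U_i$ with $K_i\subseteq A\subseteq U_i$ and $|\mu_{h_i,k_i}|(U_i\setminus K_i)<\epsilon$. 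Put $K=\bigcup_{i=1}^nK_i$ (compact, $\subseteq A$) and $U=\bigcap_{i=1}^nU_i$ (open, $\supseteq A$), so $U\setminus K\subseteq U_i\setminus K_i$ for all $i$. By Urysohn's lemma choose $f\in\cx$ with $\chi_K\le f\le\chi_U$, whence $|f-\chi_A|\le\chi_{U\setminus K}$ pointwise, and therefore for each $i$
\[
\bigl|\langle h_i,(\phi_\mu(f)-\mu(A))k_i\rangle\bigr|=\Bigl|\int_X(f-\chi_A)\,d\mu_{h_i,k_i}\Bigr|\le|\mu_{h_i,k_i}|(U\setminus K)<\epsilon.
\]
Thus $\phi_\mu(f)$ lies in the prescribed neighbourhood of $\mu(A)$; as the neighbourhood was arbitrary, $\mu(A)\in$ WOT-$\overline{\phi_\mu(\cx)}$.

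\textbf{Step 3 and conclusion.} Left and right multiplication on $\bh$ are WOT-continuous, so $S'=(\text{WOT-}\cspan S)'$ for every $S\subseteq\bh$. From Step 1, $\phi_\mu(\cx)\subseteq$ WOT-$\cspan\mu(\ox)$, hence $\mu(\ox)'=(\text{WOT-}\cspan\mu(\ox))'\subseteq\phi_\mu(\cx)'$; from Step 2, $\mu(\ox)\subseteq$ WOT-$\overline{\phi_\mu(\cx)}\subseteq$ WOT-$\cspan\phi_\mu(\cx)$, hence $\phi_\mu(\cx)'=(\text{WOT-}\cspan\phi_\mu(\cx))'\subseteq\mu(\ox)'$; combining, $\mu(\ox)'=\phi_\mu(\cx)'$. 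The same two inclusions show that WOT-$\overline{\phi_\mu(\cx)}$ and WOT-$\overline{\Span}\mu(\ox)$ each contain the other, giving the final equality. The technical heart is Step 2: regularity is hypothesized only for the positive measures $\mu_{h,h}$, so one must pass to the mixed measures $\mu_{h,k}$ by polarization, and then---crucially---produce a \emph{single} compact set and a single open set that work simultaneously for all of the finitely many test vectors defining the neighbourhood (the finite-union/finite-intersection step), after which the pointwise bound $|f-\chi_A|\le\chi_{U\setminus K}$ from Urysohn's lemma converts the measure-theoretic smallness into the required WOT estimate.
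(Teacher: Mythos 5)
Your proof is correct, but it takes a genuinely different route from the paper's. The paper establishes the commutant equality $\mu(\ox)'=\phi_\mu(\cx)'$ first and directly: the inclusion $\mu(\ox)'\subseteq\phi_\mu(\cx)'$ by the computation $\mu_{T^*h,k}=\mu_{h,Tk}$, and the reverse inclusion by running the same identities backwards, where the key point is the uniqueness part of the Riesz--Markov theorem for regular measures (to pass from $\int f\,d\mu_{T^*h,k}=\int f\,d\mu_{h,Tk}$ for all $f\in\cx$ to equality of the measures). The WOT statements are then obtained by passing to the minimal Naimark dilation $(\pi,V,\hpi)$, applying the commutant equality to $\pi$, invoking von Neumann's double commutant theorem for the $*$-algebra $\phi_\pi(\cx)$ to approximate $\pi(A)$ by a net $\phi_\pi(f_i)$ in WOT, and compressing by $V$. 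You instead prove the two density statements directly at the level of $\mu$ itself --- uniform approximation of $f$ by simple functions gives norm (hence WOT) approximation of $\phi_\mu(f)$ by elements of $\Span\mu(\ox)$, and inner/outer regularity (transferred to the mixed measures $\mu_{h,k}$ by polarization) plus Urysohn's lemma gives the WOT approximation of $\mu(A)$ by $\phi_\mu(f)$ --- and then you deduce the commutant equality formally from the fact that commutants are unchanged under passing to WOT-closed spans. Your route is more elementary (no dilation, no bicommutant theorem), yields the slightly stronger fact that $\phi_\mu(\cx)$ lies in the \emph{norm}-closed span of $\mu(\ox)$, and isolates exactly where regularity enters (the compact/open sandwich in Step 2, correctly arranged to work simultaneously for the finitely many test vectors); the paper's route is shorter given the dilation machinery already in place, and its use of regularity is hidden in the Riesz--Markov uniqueness step instead.
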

\begin{proof}
First assume $T\in\mu(\ox)'$.  Then $\mu(A)T=T\mu(A)$ for all $A\in\ox$ and hence
\begin{align*}
    \langle T^*h,\mu(A)k\rangle=\langle h, T\mu(A)k\rangle=\langle h,\mu(A)Tk\rangle,
\end{align*}
for all $h,k\in\h$, which  is equivalent to
$\mu_{T^*h,k}=\mu_{h,Tk}$, as complex measures. Therefore for all $f\in C(X)$, it follows that
\begin{align*}
\langle T^*h,\phi_\mu(f)k\rangle =\int_X fd\mu_{T^*h,k}=\int_X fd\mu_{h,Tk}=\langle h,\phi_\mu(f)Tk\rangle.
\end{align*}
 Since $h,k\in\h$ are arbitrary, we conclude that
 $$T\phi_\mu(f)=\phi_\mu(f)T ~~\text{for all }f\in\cx,$$
 which implies $T\in\phi_\mu(\cx)'$. Thus we have proved the inclusion $\mu(\ox)'\subseteq\phi_\mu(\cx)'$. The other way of the inclusion is similarly proved just by reversing the  implications above.

Now let $(\pi,V,\hpi)$ be the minimal Naimark dilation for $\mu$. To show that $\mu(A)\in$ WOT-$\overline{\phi_\mu(\cx)}$ for $A\in\ox$,  firstly note that
$$\pi(\ox)''=\phi_\pi(\cx)'',$$
the double commutant of the respective sets in $\B(\hpi)$, which follows from first part of the proof. Therefore, since $\pi(A)\in\pi(\ox)$ and $\pi(\ox)\subseteq\pi(\ox)''=\phi_\pi(C(X))''$, it follows from double commutant theorem (Theorem IX.6.4, \cite{Conway})  for the $*$-algebra $\phi_\pi(C(X))$, that there is  a net $\{f_i\}$ in $C(X)$ such that
$$\phi_\pi(f_i)\to\pi(A) ~~\text{ in WOT}.$$
This implies
\begin{align*}
    \phi_\mu(f_i)=V^*\phi_\pi(f_i)V\to V^*\pi(A)V=\mu(A) ~~\text{ in WOT}
\end{align*}
 and so we conclude that $\mu(A)\in$ WOT-$\overline{\phi_\mu(C(X))}$. Other assertions follow similarly.
\end{proof}

\subsection{$\cst$-extreme points of UCP maps on commutative $\cst$-algebras}
For a unital $C^*$-algebra $\A$ and a Hilbert space $\h$,  let
$UCP_\h(\A)$ denote the collection of all unital  completely
positive maps from $\A$ to $\bh$. It is clear that $UCP_\h(\A)$ is a convex set. The
seminal paper by Arveson \cite{Arveson1} studies the extreme points
of $UCP_\h(\A)$ and provides an abstract characterization. Several
authors have looked into the classical convexity (\cite{Choi},
\cite{KRP4}, \cite{KRP1}, \cite{Stormer} and \cite{Bhat Pati
Sundar} ) of $UCP_\h(\A)$ and its subclasses. Many others have
considered different versions of  convexity on $UCP_\h(\A)$, e.g.  \cite{Effros
Winkler}, \cite{Fujimoto},  \cite{Davidson Kennedy}, \cite{Loebl
Paulsen}, \cite{Magajna}, \cite{Farenick Morenz}, \cite{Farenick
Zhou}, \cite{Zhou} and \cite{Gregg}.

The main focus of this paper has been on  the notion of
$\cst$-convexity. Farenick and Morenz \cite{Farenick Morenz} first
studied the $\cst$-convexity structure of $UCP_\h(\A)$. They gave a complete
characterization of all $\cst$-extreme points of $UCP_\h(\A)$,
whenever $\h$ is finite dimensional. In \cite{Farenick Zhou}, an
abstract characterization of all $\cst$-extreme points were given,
which we have presented in Theorem \ref{thm:Farenick and Zhou
characterization of $C^*$-extreme points} in the language of POVMs.
In \cite{Gregg}, Gregg obtained a necessary criterion for
$\cst$-extreme points in $UCP_\h(\A)$, when $\A$ is a commutative
unital $\cst$-algebra. We carry forward this investigation of
$\cst$-convexity structure of $\uhx$ by using the tools that we have
developed for POVMs and its correspondence with completely positive
maps.

More formally,  $UCP_\h(C(X))$ is a $C^*$-convex set in the sense that
\begin{align*}
\sum_{i=1}^nT_i^*\phi_i(\cdot)T_i\in UCP_\h(\cx)
\end{align*}
whenever  $\phi_i\in UCP_\h(\cx)$ and $T_i\in\bh$ with
$\sum_{i=1}^nT_i^*T_i=I_\h$. In a way similar to $\cst$-extreme
points for POVMs in Definition \ref{definition of C*-extreme
points}, we  define  $C^*$-extreme points of $UCP_\h(\cx)$ (see
\cite{Farenick Morenz}) as follows:

\begin{definition}\label{C*-convexity for UCP maps}
A map $\phi\in\uhx$ is {\em $\cst$-extreme} if, whenever
$\phi=\sum_{i=1}^nT_i^*\phi_i(\cdot)T_i$ for $\phi_i\in\uhx$ with
invertible operators $T_i\in\bh$ satisfying
$\sum_{i=1}^nT_i^*T_i=I_\h$, then  $\phi _i$ is unitarily equivalent to
$\phi$ i.e. $\phi=U_i^*\phi_i(\cdot)U_i$ for some unitary operator
$U_i\in\bh$ for every $i.$
\end{definition}

The correspondence of regular
 POVMs and completely positive maps  described above clearly
 preserves classical as well as   $\cst$-convexity structures. Recall that $\rpx$ denotes the collection
 of all regular Borel  normalized POVMs on $X$.

\begin{theorem}\label{presevance of C*-extreme point in the correspondece of POVM and cp maps}
A normalized regular POVM $\mu$ is
$C^*$-extreme (extreme) in $\rpx$ (or in $\px$) if and only if
$\phi_\mu$ is $C^*$-extreme (extreme) in $UCP_\h(\cx)$.
%\end{enumerate}
\end{theorem}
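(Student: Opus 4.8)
The plan is to exploit the dictionary established in Theorem \ref{thm:correspondece between POVM and cp maps}, in particular parts \eqref{eq: uniqueness of mu and phi} and \eqref{eq:operator sum and multiplication}, which state that $\mu \mapsto \phi_\mu$ is a bijection between regular $\bh$-valued POVMs on $X$ and completely positive maps on $C(X)$, with inverse $\phi \mapsto \mu_\phi$, and that this bijection intertwines the operations $T^*(\cdot)T$. First I would observe that by part \eqref{eq:expression of mu(X) and phi(1)} the correspondence matches normalization: $\mu(X) = I_\h$ if and only if $\phi_\mu(1) = I_\h$, so $\mu$ is a normalized POVM precisely when $\phi_\mu \in UCP_\h(C(X))$. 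Combined with Proposition \ref{a regular povm is C*-extreme in px iff it is in rpx}, it suffices to prove the equivalence between $C^*$-extremity of $\mu$ in $\rpx$ and of $\phi_\mu$ in $UCP_\h(C(X))$.

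Next I would verify that the correspondence preserves $C^*$-convex combinations. Suppose $\phi_\mu = \sum_{i=1}^n T_i^*\phi_i(\cdot)T_i$ is a proper $C^*$-convex combination in $UCP_\h(C(X))$. Each $\phi_i$ corresponds to a unique regular normalized POVM $\mu_i := \mu_{\phi_i}$, and by part \eqref{eq:operator sum and multiplication} together with part \eqref{eq:sum of two phi and mu} of Theorem \ref{thm:correspondece between POVM and cp maps}, applying $\phi \mapsto \mu_\phi$ to both sides gives $\mu = \sum_{i=1}^n T_i^*\mu_i(\cdot)T_i$, which is a proper $C^*$-convex combination in $\rpx$. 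Conversely, starting from a proper $C^*$-convex combination $\mu = \sum_{i=1}^n T_i^*\mu_i(\cdot)T_i$ in $\rpx$, applying $\mu \mapsto \phi_\mu$ and using the same parts yields the corresponding decomposition $\phi_\mu = \sum_{i=1}^n T_i^*\phi_{\mu_i}(\cdot)T_i$ in $UCP_\h(C(X))$. Thus proper $C^*$-convex combinations of $\mu$ are in bijective correspondence (via the same $C^*$-coefficients $T_i$) with those of $\phi_\mu$, and the coefficients and the indexing set are unchanged.

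The final step is to check that unitary equivalence is preserved in both directions. If $\mu_i(\cdot) = U_i^*\mu(\cdot)U_i$ for a unitary $U_i$, then applying $\mu \mapsto \phi_\mu$ and part \eqref{eq:operator sum and multiplication} gives $\phi_{\mu_i} = U_i^*\phi_\mu(\cdot)U_i$, and symmetrically in the other direction using $\phi\mapsto\mu_\phi$. Putting these three observations together: $\mu$ is $C^*$-extreme in $\rpx$ iff every proper $C^*$-convex combination of $\mu$ forces each component to be unitarily equivalent to $\mu$, iff (by the bijection of combinations and preservation of unitary equivalence) every proper $C^*$-convex combination of $\phi_\mu$ forces each component to be unitarily equivalent to $\phi_\mu$, iff $\phi_\mu$ is $C^*$-extreme in $UCP_\h(C(X))$. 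The classical-extremity statement is proved by the identical argument, using part \eqref{eq:sum of two phi and mu} in place of \eqref{eq:operator sum and multiplication} for ordinary convex combinations, and noting that ordinary unitary equivalence is just the special case $T_i = \tfrac{1}{\sqrt{n}}U_i$. I do not expect any serious obstacle here; the only point requiring a little care is to record explicitly that the correspondence sends \emph{regular} normalized POVMs to \emph{unital} CP maps and back, so that the ambient $C^*$-convex sets genuinely match up — but this is immediate from part \eqref{eq:expression of mu(X) and phi(1)} and the construction of $\mu_\phi$ as a regular POVM.
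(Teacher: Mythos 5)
Your proposal is correct and is essentially the paper's own argument: the paper proves this theorem in one line by noting that the bijective correspondence of Theorem \ref{thm:correspondece between POVM and cp maps} preserves classical and $C^*$-convex combinations and unitary equivalences, which is exactly what you verify in detail using parts \eqref{eq: uniqueness of mu and phi}, \eqref{eq:expression of mu(X) and phi(1)}, \eqref{eq:sum of two phi and mu} and \eqref{eq:operator sum and multiplication}, together with Proposition \ref{a regular povm is C*-extreme in px iff it is in rpx} to pass between $\px$ and $\rpx$. The only blemish is the closing aside about ordinary extremity being the ``special case $T_i=\tfrac{1}{\sqrt{n}}U_i$'': for classical extreme points the conclusion is equality $\mu_i=\mu$ (preserved because the correspondence is a bijection), not unitary equivalence, but this does not affect the validity of your argument.
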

\begin{proof}
The proof follows from Theorem \ref{thm:correspondece between POVM
and cp maps}, because   classical, $C^*$-convex combinations and
unitary equivalences are preserved under the correspondence.
\end{proof}

Following the discussions above, we are now ready to deduce some
results  for $\uhx$. As noticed in Proposition \ref{a regular povm
is C*-extreme in px iff it is in rpx}, a regular normalized POVM
$\mu$ is a $\cst$-extreme  point in $\px$ if and only if $\mu$ is a
$\cst$-extreme point in $\rpx$. Therefore, it follows from Theorem
\ref{presevance of C*-extreme point in the correspondece of POVM and cp maps} that $\mu$ is  $\cst$-extreme in $\px$ if and only if
$\phi_\mu$ is $\cst$-extreme in $\uhx$. Thus, whenever $X$ is a
compact Hausdorff space, we have got freedom to bring back all the
results on $\cst$-extreme point in $\px$ into  the language of
$\uhx$. We frequently make use of Theorem \ref{thm:correspondece
between POVM and cp maps} and Theorem \ref{presevance of C*-extreme
point in the correspondece of POVM and cp maps}. Before going
forward, we recall the following known fact.

\begin{theorem}(Proposition 1.2, \cite{Farenick Morenz})
Every unital $*$-homomorphism is a $\cst$-extreme point in $UCP_\h(C(X))$.
\end{theorem}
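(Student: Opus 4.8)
The plan is to establish this as the completely positive analogue of Corollary \ref{PVM is C*-extreme}, using Arveson's Radon--Nikodym theorem for CP maps (the analogue of Theorem \ref{thm:Radon-Nikodym type theorem}) followed by a polar decomposition. Let $\A$ be a unital $C^*$-algebra and let $\pi\in UCP_{\h}(\A)$ be a $*$-homomorphism (necessarily unital). Suppose $\pi=\sum_{i=1}^{n}T_i^*\phi_i(\cdot)T_i$ is a proper $C^*$-convex combination, so the $\phi_i\in UCP_{\h}(\A)$, the $T_i\in\bh$ are invertible, and $\sum_{i=1}^{n}T_i^*T_i=I_\h$. Fixing $i$, the first step is the observation that $\pi-T_i^*\phi_i(\cdot)T_i=\sum_{j\neq i}T_j^*\phi_j(\cdot)T_j$ is completely positive, so $T_i^*\phi_i(\cdot)T_i$ is dominated by $\pi$ in the CP order.

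The crucial point is that a unital $*$-homomorphism $\pi$ is its own minimal Stinespring dilation, with triple $(\pi,I_\h,\h)$ --- minimality being immediate from $\pi(1)=I_\h$. Hence by the Radon--Nikodym theorem for CP maps (Theorem 1.4.2, \cite{Arveson1}) there is a positive contraction $D_i\in\pi(\A)'$ with $T_i^*\phi_i(a)T_i=D_i\,\pi(a)$ for all $a\in\A$. Evaluating at $a=1$ and using $\phi_i(1)=I_\h$ forces $D_i=T_i^*T_i$; in particular $D_i$ is invertible and commutes with every $\pi(a)$. Consequently $\phi_i(a)=(T_i^*)^{-1}\bigl(T_i^*T_i\,\pi(a)\bigr)T_i^{-1}=T_i\,\pi(a)\,T_i^{-1}$ for all $a\in\A$.

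Finally I would take the polar decomposition $T_i=U_iP_i$ with $P_i=(T_i^*T_i)^{1/2}=D_i^{1/2}$ positive and invertible and $U_i$ unitary (since $T_i$ is invertible). As $D_i\in\pi(\A)'$, functional calculus gives $P_i\in\pi(\A)'$, whence $P_i\,\pi(a)\,P_i^{-1}=\pi(a)$ and therefore $\phi_i(a)=U_iP_i\,\pi(a)\,P_i^{-1}U_i^*=U_i\,\pi(a)\,U_i^*$, i.e.\ $\pi=U_i^*\phi_i(\cdot)U_i$. Thus each $\phi_i$ is unitarily equivalent to $\pi$, so $\pi$ is $C^*$-extreme. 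The argument is short, and the only step needing any care is seeing that unitality of $\phi_i$ pins the Radon--Nikodym derivative down to $T_i^*T_i$, making it invertible and central --- the rest is bookkeeping. For the commutative case $\A=C(X)$ relevant to this paper one may alternatively stay entirely inside the POVM picture: write $\pi=\phi_\mu$ for a regular POVM $\mu$, note $\mu$ is a spectral measure by Theorem \ref{thm:correspondece between POVM and cp maps}(\ref{eq:homomorphism and PVM equivalence}), apply Corollary \ref{PVM is C*-extreme}, and transport back via Theorem \ref{presevance of C*-extreme point in the correspondece of POVM and cp maps}.
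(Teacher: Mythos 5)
Your argument is correct. Note, however, that the paper does not prove this statement at all: it simply quotes it from (Proposition 1.2, \cite{Farenick Morenz}), so you have supplied a proof where the paper gives only a citation. Your route — realize the unital $*$-homomorphism $\pi$ as its own minimal Stinespring dilation $(\pi,I_\h,\h)$, apply Arveson's Radon--Nikodym theorem to each summand $T_i^*\phi_i(\cdot)T_i\leq\pi$ to get a positive $D_i\in\pi(\A)'$, pin it down as $D_i=T_i^*T_i$ by unitality, and then absorb the positive part of the polar decomposition $T_i=U_i|T_i|$ into the commutant — is sound in every step (in particular $|T_i|=D_i^{1/2}$ and its inverse do lie in $\pi(\A)'$, and the conclusion $\pi=U_i^*\phi_i(\cdot)U_i$ matches the paper's Definition \ref{C*-convexity for UCP maps}). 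It is also the natural CP-map analogue of the paper's own arguments for POVMs: it mirrors the proof of Corollary \ref{PVM is C*-extreme} and the converse half of Corollary \ref{Zhou Characterization of $C^*$-extreme points}, with Theorem \ref{thm:Radon-Nikodym type theorem} replaced by its Arveson original. Two small remarks: your argument has the added virtue of working for an arbitrary unital $C^*$-algebra $\A$, which is the generality in which Farenick and Morenz state the result, whereas your alternative route through the POVM picture (spectral measure via Theorem \ref{thm:correspondece between POVM and cp maps}, then Corollary \ref{PVM is C*-extreme}, then Theorem \ref{presevance of C*-extreme point in the correspondece of POVM and cp maps}, together with Proposition \ref{a regular povm is C*-extreme in px iff it is in rpx}) only covers the commutative case $\A=C(X)$ with $X$ compact Hausdorff — sufficient for this paper's use, but not for the statement as cited.
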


 Now let $X$ be a countable compact Hausdorff space. Then we saw in Theorem
\ref{atomic $C^*$-extreme points are PVM} that every $\cst$-extreme
point in $\px$ is  spectral. Since spectral measures correspond to unital $*$-homomorphisms, here is the corresponding result.
\begin{theorem}\label{$C^*$-extreme UCP maps are homomorphisms}
Let $\A$ be a commutative unital $C^*$-algebra with countable
spectrum and let $\phi $ be a map in $UCP_\h(\A)$. Then $\phi$ is
$C^*$-extreme if and only if $\phi$ is a $*$-homomorphism.
\end{theorem}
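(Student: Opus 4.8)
The plan is to reduce the statement to the results already established for POVMs, via the Gelfand--Naimark correspondence. First I would use Gelfand--Naimark to write $\A = \cx$, where $X$ is the spectrum of $\A$, a compact Hausdorff space which is countable by hypothesis; since $X$ is Hausdorff, points are closed and hence Borel, so $(X,\ox)$ is a countable measurable space in the sense used in this paper (all singletons measurable). The backward implication is then immediate: a $*$-homomorphism of $\A$ into $\bh$ is always $\cst$-extreme in $UCP_\h(\A)$ (Proposition 1.2, \cite{Farenick Morenz}, quoted above).

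For the forward implication, suppose $\phi \in \uhx$ is $\cst$-extreme. Let $\mu := \mu_\phi \in \rpx$ be the regular POVM corresponding to $\phi$ under the correspondence of Theorem \ref{thm:correspondece between POVM and cp maps}, so that $\phi = \phi_\mu$ by part \eqref{eq: uniqueness of mu and phi}. By Theorem \ref{presevance of C*-extreme point in the correspondece of POVM and cp maps}, $\mu$ is $\cst$-extreme in $\rpx$, and then Proposition \ref{a regular povm is C*-extreme in px iff it is in rpx} shows $\mu$ is $\cst$-extreme in $\px$. Since $X$ is countable, every POVM on $X$ is atomic, so Theorem \ref{atomic $C^*$-extreme points are PVM} applies and shows $\mu$ is spectral, i.e.\ a projection valued measure. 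Finally, part \eqref{eq:homomorphism and PVM equivalence} of Theorem \ref{thm:correspondece between POVM and cp maps} gives that $\phi = \phi_\mu$ is a $*$-homomorphism, as desired.

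I expect no serious obstacle here: all the substantive content sits in Theorem \ref{atomic $C^*$-extreme points are PVM} (which rests on Theorem \ref{if mu(A) commutes with everything, then it is a projection}) and in the POVM--CP correspondence of Section \ref{application to completely positive maps}. The only point requiring a moment's care is to invoke the correspondence in the direction $\phi \mapsto \mu_\phi$, which automatically produces a \emph{regular} POVM satisfying $\phi_{\mu_\phi} = \phi$; this regularity is precisely what makes Theorem \ref{presevance of C*-extreme point in the correspondece of POVM and cp maps} applicable and lets us bypass any delicate questions about non-regular Borel POVMs on a non-metrizable countable compact Hausdorff space. In particular, taking $X = \{1,\dots,n\}$ recovers the case $\A = \C^n$ and extends the Farenick--Morenz result from finite-dimensional $\h$ to arbitrary separable $\h$.
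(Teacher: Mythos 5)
Your proposal is correct and is essentially the paper's own argument: the paper likewise writes $\A=C(X)$ for a countable compact Hausdorff $X$, gets the converse from the Farenick--Morenz fact that $*$-homomorphisms are $C^*$-extreme, and for the forward direction passes to the regular POVM $\mu_\phi$ via Theorems \ref{thm:correspondece between POVM and cp maps} and \ref{presevance of C*-extreme point in the correspondece of POVM and cp maps} (together with Proposition \ref{a regular povm is C*-extreme in px iff it is in rpx}), then invokes Theorem \ref{atomic $C^*$-extreme points are PVM} since every POVM on a countable space is atomic, so $\mu_\phi$ is spectral and $\phi$ is a $*$-homomorphism. Your remark about regularity being automatic for $\mu_\phi$ is exactly the point that makes the transfer legitimate, so there is no gap.
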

We apply this result to the $C^*$-algebra generated by a single
normal operator to have the following.

\begin{example}\label{ucp map induced from a normal operator}
Let $N\in\bk$ be a normal operator on a Hilbert space $\K$ with
countable spectrum $\sigma(N)$ (in particular, when $N$ is compact).
It is known that for such a normal operator, a subspace
$\h\subseteq\K$ is invariant for $N$ if and only if it is reducing
for $N$(Theorem 1.23, \cite{Rajdavi Rosenthal}). Consider the unital
completely positive map $\phi_N:C^*(N)\to\bh$ defined by
$\phi_N(T)=P_\h T_{|_\h}$ for all $T\in C^*(N)$, where $C^*(N)$ is the
unital $C^*$-algebra generated by $N.$ It is easy to verify that $\phi_N$ is a $*$-homomorphism if and only if $\h$ is a reducing subspace for $N$. Thus since $C^*(N)$ is isomorphic
to $C(\sigma(N))$ as $\cst$-algebra and $\sigma(N)$ is countable,
the argument above along with Theorem \ref{$C^*$-extreme UCP maps
are homomorphisms} show that the following conditions are
equivalent:
\begin{enumerate}
    \item  $\phi_N$ is a $C^*$-extreme point in $UCP_\h(C^*(N))$.
\item  $\phi_N$ is a $*$-homomorphism.
\item  $\h$ is an invariant subspace of $N$.
\item $\h$ is a co-invariant subspace of $N$.
\item  $\h$ is a reducing subspace of $N$.
\end{enumerate}
\end{example}

Next using the results in Section \ref{measure isomorphism}, we provide here an
 example of a $\cst$-extreme point in $\px$ which is not spectral, whenever $X$ is an uncountable compact metric space and $\h$ an infinite dimensional Hilbert space.

\begin{example}\label{existence of a non-homomorphic C^*-extreme points on an uncountable metric space}
 Consider the normalized POVM  $\nu:\mathcal{O}(\T)\to\mathcal{B}(\h^2)$  defined by
\begin{align*}
\nu(A)=P_{\h^2}{M_{\chi_A}}_{|_{\h^2}}~~\text{for all }A\in\mathcal{O}(\T),
\end{align*}
where $\h^2$ denotes the Hardy space on the unit circle $\mathbb{T}$.
%and $p:[0,1]\to\mathbb{T}$ is the function $p(t)=e^{2\pi it}$ for $t\in [0,1]$.
Here $M_{f}$ denotes the multiplication operator on $L^2(\T)$ for any $f\in L^\infty(\T)$.
Then the corresponding unital completely positive map $\phi_\nu:C(\T)\to\B(\h^2)$ is given by
$$\phi_\nu(f)=P_{\h^2}{M_{f}}_{|_{\h^2}}~~\text{for all }f\in C(\T).$$
It is known  (Example 2, \cite{Farenick Morenz}) that $\phi_\nu$ is
a $C^*$-extreme point in $UCP_\h(C(\T))$ and therefore, $\nu$ is
$C^*$-extreme in $\mathcal{P}_{\h^2}(\T)$ by Theorem
\ref{presevance of C*-extreme point in the correspondece of POVM and
cp maps}. Also note that $\nu$ is  not  spectral, since $\phi_\nu$
is not a $*$-homomorphism.  Now let $X$ be an uncountable compact
metric space. Then by well-known theorems of Borel isomorphism
(Theorem 2.12, \cite{KRP3}),  there exists a Borel isomorphism
$f:\T\to X$.  Define the normalized POVM $\mu:\ox\to\B(\h^2)$ by
\begin{align}
    \mu(A)=\nu(f^{-1}(A))~~\text{for all }A\in\ox.
\end{align}
Then Theorem \ref{Borel isomorphism preserve same properties} along with Theorem \ref{some similar properties of isomorphic POVM} imply that $\mu$ is a $C^*$-extreme point in $\s_{\h^2}(X)$ and is not  spectral. Thus, since any infinite dimensional separable Hilbert  space is isomorphic to $\h^2$, what we have shown is that whenever $X$ is an uncountable compact metric space and $\h$ an infinite dimensional Hilbert space, then $\px$ contains a $\cst$-extreme point which is not spectral. The  assertion above can be applied to Polish spaces as well.
\end{example}

 Let $E$ be an uncountable compact subset of $\C$. Then $E$ is a
compact metric space.  We consider the normalized POVM $\mu:\mathcal{O}(E)\to\B(\h^2)$ constructed in Example \ref{existence of a non-homomorphic C^*-extreme points on an uncountable metric space},
 which is already in the minimal Naimark
dilation form $\mu (\cdot ) =V^*\pi (\cdot ) V$. If $N=\int_E
zd\pi\in\B(\hpi)$, then $N$ is a normal operator with spectrum
$E$. Also the corresponding completely positive map
$\phi_\mu:C^*(N)\to\B(\h^2)$ is of the form $\phi_\mu(T)=P_{\h^2} T_{|_{\h^2}}$ for $T\in C^*(N)$.
Thus we have got an example of a completely positive map of the
form $\phi_N$ as discussed in Example \ref{ucp map induced from a normal operator}, which is $C^*$-extreme but not a
$*$-homomorphism.

Now let $\A$ be a separable commutative unital $C^*$-algebra. Then its spectrum  is a separable compact Hausdorff space (Theorem V.6.6, \cite{Conway}) and hence metrizable, which is to say $\A=\cx$ for a compact metric space $X$. Therefore, Example \ref{existence of a non-homomorphic C^*-extreme points on an uncountable metric space} and Theorem \ref{presevance of C*-extreme point in the correspondece of POVM and cp maps}  give us the following result for a separable commutative unital $\cst$-algebra with uncountable spectrum.

\begin{theorem}\label{a non homomorphic C*-extreme point for separable C*-algebra}
Let $\A$ be a separable commutative unital $C^*$-algebra with
uncountable spectrum and let $\h$ be an
 infinite dimensional separable Hilbert space. Then $UCP_\h(\A)$ contains a $C^*$-extreme point which is not a $*$-homomorphism.
\end{theorem}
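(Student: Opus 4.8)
The plan is to reduce the statement entirely to Example \ref{existence of a non-homomorphic C^*-extreme points on an uncountable metric space} together with the POVM--CP correspondence of Section \ref{application to completely positive maps}. First I would invoke the structure theory of commutative $C^*$-algebras: since $\A$ is separable, commutative and unital, its spectrum $X$ is a separable compact Hausdorff space, hence metrizable (Theorem V.6.6, \cite{Conway}), so $\A \cong C(X)$ for a compact metric space $X$; the hypothesis that the spectrum is uncountable says precisely that $X$ is an uncountable compact metric space, which is exactly the setting of Example \ref{existence of a non-homomorphic C^*-extreme points on an uncountable metric space}.

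Next I would produce a concrete $C^*$-extreme, non-spectral normalized POVM on $X$ with values in $\B(\h)$. Starting from the POVM $\nu$ on $\T$ with values in $\B(\h^2)$ from that example --- the compression of the multiplication operators to the Hardy space, which is $C^*$-extreme in $\mathcal{P}_{\h^2}(\T)$ and not spectral since its associated UCP map is not multiplicative --- and choosing a Borel isomorphism $f : X \to \T$ (Theorem 2.12, \cite{KRP3}), Theorem \ref{borel isomorphism preserve same properties} combined with Theorem \ref{some similar properties of isomorphic POVM} gives a POVM $\mu$ on $X$ with values in $\B(\h^2)$ that is $C^*$-extreme in $\mathcal{P}_{\h^2}(X)$ and still not spectral. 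Since $\h$ is separable and infinite dimensional, there is a unitary $W : \h \to \h^2$; setting $\mu'(A) := W^*\mu(A)W$ gives a normalized POVM on $X$ with values in $\B(\h)$, unitarily equivalent to $\mu$, hence $C^*$-extreme in $\px$ (that $C^*$-extremity is preserved under unitary equivalence is immediate from Definition \ref{definition of C*-extreme points}: conjugating a proper $C^*$-convex combination of $\mu'$ by $W$ yields one of $\mu$, and conversely), and $\mu'$ is not a projection valued measure because $\mu$ is not.

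Finally I would pass to UCP maps. As $X$ is compact, Theorem \ref{presevance of C*-extreme point in the correspondece of POVM and cp maps} gives that $\phi_{\mu'} \in UCP_\h(C(X)) = UCP_\h(\A)$ is $C^*$-extreme. By part (3) of Theorem \ref{thm:correspondece between POVM and cp maps}, $\phi_{\mu'}$ is a $*$-homomorphism if and only if $\mu'$ is a projection valued measure; since $\mu'$ is not, $\phi_{\mu'}$ is the desired $C^*$-extreme point of $UCP_\h(\A)$ that is not a $*$-homomorphism.

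I do not anticipate a genuine obstacle, since the substantive construction is already contained in Example \ref{existence of a non-homomorphic C^*-extreme points on an uncountable metric space}; the only points requiring a moment of care are that metrizability of the spectrum really follows from separability of $\A$, and that transporting the example along a Borel isomorphism of base spaces and along the unitary identification $\h \cong \h^2$ preserves both $C^*$-extremity and the failure of multiplicativity --- all of which is handled by the cited measure-isomorphism results and by the bare definition of $C^*$-extreme point.
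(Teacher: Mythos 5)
Your proposal is correct and follows essentially the same route as the paper: metrizability of the spectrum via separability, transporting the Hardy-space example along a Borel isomorphism using the measure-isomorphism results, and passing to UCP maps via the POVM--CP correspondence. The only addition is your explicit unitary identification of $\h$ with $\h^2$, a point the paper leaves implicit, and it is handled correctly.
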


The theorem above fails to be true if the separability assumption is removed, as we see below. If $X$ is a discrete space and $\tilde{X}$ denotes its one-point compactification, then we saw in
Remark \ref{C*-extreme point on one-point compactification of discrete space} that  every regular POVM in $\mathcal{P}_\h(\tilde{X})$ is atomic, and hence every $\cst$-extreme point in $\mathcal{R}\p_\h(\tilde{X})$ is  spectral. Equivalently, every $\cst$-extreme point in $UCP_\h(C(\tilde{X}))$ is a $*$-homomorphism by Theorem \ref{presevance of C*-extreme point in the correspondece of POVM and cp maps}. Note that, whenever $X$ is an uncountable discrete space,  then $\tilde{X}$ is a non-separable compact Hausdorff space  and in particular, $C(\tilde{X})$ is a non separable $\cst$-algebra (Theorem V.6.6, \cite{Conway}). Thus the assumption of separability of the $\cst$-algebra $\A$  in Theorem \ref{a non homomorphic C*-extreme point for separable C*-algebra} is crucial. We have obtained the following:

\begin{theorem}
Let $\A$ be a commutative unital  $\cst$-algebra whose spectrum is
 a one-point compactification of a discrete space.
Then every $\cst$-extreme point in $UCP_\h(\A)$ is a $*$-homomorphism.
\end{theorem}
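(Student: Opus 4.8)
The plan is to deduce this immediately from the machinery built up for POVMs. First I would invoke the Gelfand--Naimark theorem to write $\A = C(\tilde X)$, where $\tilde X = X \cup \{\infty\}$ is the one-point compactification of a discrete space $X$, so that $UCP_\h(\A) = UCP_\h(C(\tilde X))$ and the correspondence of Section \ref{application to completely positive maps} between regular normalized POVMs on $\tilde X$ and unital completely positive maps on $C(\tilde X)$ is available. Given a $\cst$-extreme $\phi \in UCP_\h(C(\tilde X))$, let $\mu = \mu_\phi \in \mathcal{R}\mathcal{P}_\h(\tilde X)$ be the associated regular normalized POVM; by Theorem \ref{presevance of C*-extreme point in the correspondece of POVM and cp maps} it is $\cst$-extreme in $\mathcal{R}\mathcal{P}_\h(\tilde X)$, and hence, by Proposition \ref{a regular povm is C*-extreme in px iff it is in rpx}, also in $\mathcal{P}_\h(\tilde X)$.

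The only step requiring a small argument is that $\mu$ is atomic. For this I would restrict $\mu$ to the Borel $\sigma$-algebra of the open discrete subspace $X \subseteq \tilde X$: the restriction is again a regular POVM, and since every compact subset of a discrete space is finite, every regular Borel positive measure on $X$ is concentrated on a countable set --- this is exactly the computation in the first paragraph of the proof of Proposition \ref{regular C*-extreme on discrete spaces are PVM}. Running that computation over an orthonormal basis of $\h$ shows $\mu|_{\ox}$, and therefore $\mu$ itself, is concentrated on a countable subset of $\tilde X$; in particular $\mu$ is atomic. This is the observation already recorded in Remark \ref{C*-extreme point on one-point compactification of discrete space}.

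Finally, since $\mu$ is an atomic $\cst$-extreme normalized POVM, Theorem \ref{atomic $C^*$-extreme points are PVM} forces $\mu$ to be spectral, i.e.\ a projection valued measure, and then part \eqref{eq:homomorphism and PVM equivalence} of Theorem \ref{thm:correspondece between POVM and cp maps} shows $\phi = \phi_\mu$ is a $*$-homomorphism, as desired. I do not expect a genuine obstacle here, since every ingredient is already in place; the only point demanding care is the atomicity reduction, which is dispatched entirely by regularity together with the finiteness of compact subsets of discrete spaces.
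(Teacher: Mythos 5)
Your proposal is correct and follows essentially the same route as the paper: the paper derives the theorem from Remark \ref{C*-extreme point on one-point compactification of discrete space} (whose content is exactly your atomicity reduction via the computation in Proposition \ref{regular C*-extreme on discrete spaces are PVM}), then Theorem \ref{atomic $C^*$-extreme points are PVM}, and finally the POVM--UCP correspondence of Theorems \ref{thm:correspondece between POVM and cp maps} and \ref{presevance of C*-extreme point in the correspondece of POVM and cp maps}. No gaps to flag.
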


Next let $\phi:\cx\to\bh$ be a unital completely positive map such
that $\phi(\cx)$ is commutative.  Then
WOT-$\overline{\phi(\cx)}$ is commutative. Since
WOT-$\overline{\phi(\cx)}=$ WOT-$\overline{\Span}\mu_\phi(\ox)$ by
Proposition \ref{prop:commutant of pi and phi_pi are same}, it
follows that WOT-$\overline{\Span}\mu_\phi(\ox)$ is commutative. In
particular, $\mu_\phi(\ox)$ is commutative. Therefore if $\phi$ is a
$\cst$-extreme point in $\uhx$ with commutative range, then
$\mu_\phi$ is a $\cst$-extreme point in $\px$ with commutative range.
Then it follows from Theorem \ref{commutative C*-extreme points are
PVM} that $\mu_\phi$ is spectral and hence, $\phi$ is a
$*$-homomorphism. Thus we have got the following result. A similar
result for extreme points with commutative range in $UCP_\h(\cx)$
holds true (see Corollary 3.6, \cite{Stormer}).

\begin{theorem}\label{C-extreme UCP maps with commutative ranges are homomorphism}
Let $\A$ be a commutative unital  $\cst$-algebra and $\phi:\A\to\bh$ a unital completely positive map with commutative range. Then $\phi$ is $\cst$-extreme in $UCP_\h(\A)$ if and only if $\phi$ is a $*$-homomorphism.
\end{theorem}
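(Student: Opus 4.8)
The plan is to reduce the statement to its POVM counterpart, Theorem~\ref{commutative C*-extreme points are PVM}, and then transport the conclusion back along the correspondence developed in this section. First I would invoke Gelfand--Naimark to write $\A=\cx$ for a compact Hausdorff space $X$, so that $\phi\in\uhx$, and let $\mu=\mu_\phi\in\rpx$ be the regular normalized POVM associated to $\phi$ by Theorem~\ref{thm:correspondece between POVM and cp maps}; thus $\phi=\phi_\mu$. Everything then comes down to showing $\mu$ is commutative, identifying $\cst$-extremity on both sides, and recalling that spectral $\Leftrightarrow$ $*$-homomorphism.

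The one point needing a little care is the transfer of commutativity. Since $\phi(\cx)$ is commutative, so is its WOT-closure; but by Proposition~\ref{prop:commutant of pi and phi_pi are same}, WOT-$\overline{\phi(\cx)}=$ WOT-$\cspan\mu(\ox)$, so WOT-$\cspan\mu(\ox)$ is commutative and in particular $\mu(\ox)$ is commutative, i.e. $\mu$ is a commutative POVM. Now if $\phi$ is $\cst$-extreme in $\uhx$, then Theorem~\ref{presevance of C*-extreme point in the correspondece of POVM and cp maps} makes $\mu$ $\cst$-extreme in $\rpx$, hence in $\px$ by Proposition~\ref{a regular povm is C*-extreme in px iff it is in rpx}. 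Being commutative and $\cst$-extreme, $\mu$ is spectral by Theorem~\ref{commutative C*-extreme points are PVM}, and then part~\eqref{eq:homomorphism and PVM equivalence} of Theorem~\ref{thm:correspondece between POVM and cp maps} gives that $\phi=\phi_\mu$ is a $*$-homomorphism. For the converse I would simply quote that every $*$-homomorphism is $\cst$-extreme (Proposition 1.2, \cite{Farenick Morenz}); alternatively, a $*$-homomorphism corresponds to a PVM, which is spectral and hence $\cst$-extreme in $\px$ by Corollary~\ref{PVM is C*-extreme}, and Theorem~\ref{presevance of C*-extreme point in the correspondece of POVM and cp maps} pushes this back to $\uhx$.

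I do not anticipate a genuine obstacle here: all the machinery is already assembled, and the argument is essentially bookkeeping that chains together the correspondence results. The only subtlety is that Theorem~\ref{commutative C*-extreme points are PVM} is phrased for $\px$, so one must first pass from $\rpx$ to $\px$ via Proposition~\ref{a regular povm is C*-extreme in px iff it is in rpx} before applying it to the regular POVM $\mu_\phi$; once commutativity of $\mu_\phi$ has been extracted through Proposition~\ref{prop:commutant of pi and phi_pi are same}, the rest is automatic.
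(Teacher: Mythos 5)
Your proposal is correct and follows essentially the same route as the paper: extract commutativity of $\mu_\phi(\ox)$ via Proposition \ref{prop:commutant of pi and phi_pi are same}, transfer $\cst$-extremity through Theorem \ref{presevance of C*-extreme point in the correspondece of POVM and cp maps} (with Proposition \ref{a regular povm is C*-extreme in px iff it is in rpx} handling the $\rpx$ versus $\px$ issue), apply Theorem \ref{commutative C*-extreme points are PVM}, and quote Farenick--Morenz for the converse. The paper's own argument is exactly this chain, so there is nothing to add.
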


We now discuss the bounded-weak topology on $\uhx$ and how it is connected to the topology on $\px$ defined earlier (which we called bounded weak topology as well). The bounded-weak topology (see \cite{Arveson1}, \cite{Paulsen}) on $UCP_\h(C(X))$
 is given by the convergence: for a net $\{\phi_i\}$ and $\phi$ in $UCP_\h(\cx)$,
\begin{align*}
    \phi_i\to\phi\text{  if and only if } \phi_i(f)\to\phi(f) \text{ in WOT}
\end{align*}
 for all  $f\in C(X).$

 For a net $\mu^i$ and $\mu\in \rpx$, since $\phi_\mu(f)=\int_Xfd\mu$ for all $f\in\cx$, it follows that  $\mu^i\to\mu$ in $\rpx$ if and only if $\phi_{\mu^i}(f)\to\phi_\mu(f)$ in WOT for all $f\in \cx$.
 The following proposition is just a rephrasing  of the definition of the topology on regular POVMs, which effectively says that $\rpx$ and $\uhx$ are topologically homeomorphic. Recall that by Riesz-Markov representation theorem, the space of all regular Borel  complex measures $M(X)$ on $X$ is Banach space dual of $C(X)$.

\begin{proposition}
Let $\mu^i$ be a net in $\rpx$ and $\mu\in\rpx$. Then the following are equivalent:
\begin{enumerate}
  \item $\mu^i\to\mu$ in $\rpx$ (and, in $\px$).
   \item $\phi_{\mu^i}\to\phi_\mu$ in bounded-weak topology in $UCP_\h(C(X))$.
    \item $\mu^i_{h,k}\to\mu_{h,k}$ in weak*-topology on $M(X)$ for all $h,k\in \h$.
\end{enumerate}
\end{proposition}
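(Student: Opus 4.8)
The plan is to observe that, after unwinding the relevant definitions, all three conditions amount to the single assertion
\[
\int_X f\, d\mu^i_{h,k}\longrightarrow \int_X f\, d\mu_{h,k}\qquad\text{for every }f\in C(X)\text{ and every }h,k\in\h.
\]
Since $X$ is compact we have $C_b(X)=C(X)$, so the definition of the bounded-weak topology on $POVM_\h(X)$ (restricted to $\rpx\subseteq\px$) says precisely that $\mu^i\to\mu$ if and only if the displayed convergence holds; this fixes the meaning of $(1)$. Note also that the convergence defining the topology only ever refers to $C_b(X)$ and to pairs $h,k\in\h$, not to the ambient set, so it is irrelevant whether one computes the limit inside $\rpx$ or inside $\px$; this justifies the parenthetical remark in $(1)$.

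For $(1)\Leftrightarrow(2)$ I would invoke the correspondence between regular POVMs and UCP maps on $C(X)$ recorded in \eqref{eq:correspondece of phi and mu}, which gives $\langle h,\phi_{\mu^i}(f)k\rangle=\int_X f\,d\mu^i_{h,k}$ and $\langle h,\phi_\mu(f)k\rangle=\int_X f\,d\mu_{h,k}$. By definition, $\phi_{\mu^i}(f)\to\phi_\mu(f)$ in the weak operator topology means exactly that $\langle h,\phi_{\mu^i}(f)k\rangle\to\langle h,\phi_\mu(f)k\rangle$ for all $h,k\in\h$, and requiring this for all $f\in C(X)$ is precisely the bounded-weak convergence $\phi_{\mu^i}\to\phi_\mu$ in $UCP_\h(C(X))$. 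Comparing the two displayed pairings shows that $(2)$ is the same statement as $(1)$. (The UCP maps here being unital, the operators $\phi_{\mu^i}(f)$ are uniformly bounded by $\|f\|$, but no such remark is actually needed for the argument.)

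For $(1)\Leftrightarrow(3)$ I would first remark that, because $\mu$ and each $\mu^i$ are regular, the scalar measures $\mu_{h,k}$ and $\mu^i_{h,k}$ are regular complex Borel measures on $X$, hence genuine elements of $M(X)$, so the statement in $(3)$ is well posed. By the Riesz--Markov theorem $M(X)$ is the Banach-space dual of $C(X)$, and a net in $M(X)$ converges in the weak*-topology if and only if its integrals against every $f\in C(X)$ converge. Thus $\mu^i_{h,k}\to\mu_{h,k}$ in the weak*-topology for every pair $h,k\in\h$ is again exactly the statement that $\int_X f\,d\mu^i_{h,k}\to\int_X f\,d\mu_{h,k}$ for all $f\in C(X)$ and all $h,k\in\h$, i.e. condition $(1)$. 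There is no genuine obstacle in this proof; the only points that deserve a sentence of care are the identification $C_b(X)=C(X)$ coming from compactness, and the use of regularity of the POVMs to ensure that the $\mu_{h,k}$ actually live in $M(X)$ so that the weak*-topology in $(3)$ is the correct notion.
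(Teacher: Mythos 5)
Your proposal is correct and is exactly the argument the paper intends: the paper presents this proposition without proof, calling it ``just a rephrasing'' of the definitions, and your unwinding of all three conditions to the single statement $\int_X f\,d\mu^i_{h,k}\to\int_X f\,d\mu_{h,k}$ for all $f\in C(X)$ and $h,k\in\h$ (via $C_b(X)=C(X)$, the pairing $\langle h,\phi_\mu(f)k\rangle=\int_X f\,d\mu_{h,k}$, and Riesz--Markov) is precisely that rephrasing. No gaps; the remarks on regularity placing $\mu_{h,k}$ in $M(X)$ and on the irrelevance of the ambient set $\rpx$ versus $\px$ are appropriate.
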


In \cite{Farenick Morenz}, a Krein-Milman type theorem was proved
for $UCP_\h(\A)$ with respect to bounded-weak topology, for
arbitrary unital $\cst$-algebra $\A$ but finite-dimensional Hilbert
space $\h$. Here we consider commutative unital $\cst$-algebras and
arbitrary Hilbert spaces and give a similar kind of result for
$\uhx$. As in Definition \ref{definition of C*-convex hull}, we
define the {\em $\cst$-convex hull} of a subset
$\mathcal{N}\subseteq\uhx$ by
\begin{align*}
    \left\{\sum_{i=1}^nT_i^*\phi_i(\cdot)T_i;~ \phi_i\in \mathcal{N}, T_i\in\bh \text{ such that }\sum_{i=1}^nT_i^*T_i=I_\h\right\}.
\end{align*}
 The following version of Krein-Milman type theorem for commutative unital $\cst$-algebra follows from Corollary  \ref{krien-milman theorem for regular POVM}, Theorem \ref{presevance of C*-extreme point in the correspondece of POVM and cp maps} and Theorem \ref{thm:correspondece between POVM and cp maps}.

\begin{theorem}\label{Krein-Milman theorem for UCP maps}
Let $\A$ be a commutative unital $C^*$-algebra and $\h$ a Hilbert space. Then the $C^*$-convex hull of the collection of all unital $*$-homomorphisms (in particular, $\cst$-extreme points)  is dense in $UCP_\h(\A)$ with respect to bounded-weak topology.
\end{theorem}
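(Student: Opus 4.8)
The plan is to transport the Krein--Milman type theorem for regular POVMs, Corollary \ref{krien-milman theorem for regular POVM}, across the correspondence between regular normalized POVMs and unital completely positive maps set up in this section. First I would apply the Gelfand--Naimark theorem to write $\A=\cx$ for a compact Hausdorff space $X$, the spectrum of $\A$, so that $UCP_\h(\A)=\uhx$. Under the bijection $\mu\mapsto\phi_\mu$ of Theorem \ref{thm:correspondece between POVM and cp maps}, the regular spectral measures in $\rpx$ correspond exactly to the $*$-homomorphisms in $\uhx$ (by part \eqref{eq:homomorphism and PVM equivalence}), while parts \eqref{eq:sum of two phi and mu} and \eqref{eq:operator sum and multiplication} of that theorem say the correspondence carries $\cst$-convex combinations to $\cst$-convex combinations, hence $\cst$-convex hulls to $\cst$-convex hulls; moreover by Theorem \ref{presevance of C*-extreme point in the correspondece of POVM and cp maps} it carries $\cst$-extreme points of $\rpx$ to $\cst$-extreme points of $\uhx$.

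Second I would observe that $\mu\mapsto\phi_\mu$ is a homeomorphism from $\rpx$ with the bounded-weak topology of Section \ref{povm on topological space} onto $\uhx$ with the bounded-weak topology. This is the content of the proposition stated immediately before the present theorem: since $\phi_\mu(f)=\int_X f\,d\mu$ for every $f\in\cx$, convergence $\phi_{\mu^i}(f)\to\phi_\mu(f)$ in WOT for all $f\in\cx$ is literally the definition of $\mu^i\to\mu$ in $\rpx$. A homeomorphism preserving $\cst$-convex combinations sends dense subsets to dense subsets and $\cst$-convex hulls to $\cst$-convex hulls, so Corollary \ref{krien-milman theorem for regular POVM} --- the $\cst$-convex hull of the regular spectral measures is dense in $\rpx$ --- immediately yields that the $\cst$-convex hull of the $*$-homomorphisms is dense in $\uhx$ with respect to the bounded-weak topology, which is the asserted statement. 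The parenthetical ``in particular'' then follows because regular spectral measures are $\cst$-extreme in $\rpx$ (equivalently, every $*$-homomorphism is $\cst$-extreme in $UCP_\h(\A)$ by Proposition 1.2 of \cite{Farenick Morenz}), so the $\cst$-convex hull of all $\cst$-extreme points of $\uhx$ contains, hence dominates in density, the $\cst$-convex hull of the $*$-homomorphisms.

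I do not anticipate a real obstacle; the whole argument is bookkeeping around an already established dictionary. The one point deserving care is the surjectivity of $\mu\mapsto\phi_\mu$ onto all of $\uhx$ --- so that the approximation applies to an arbitrary $\phi\in UCP_\h(\A)$ rather than only to maps already exhibited as some $\phi_\mu$ --- and this is exactly the construction of $\mu_\phi$ from $\phi$ via the Riesz--Markov representation theorem together with part \eqref{eq: uniqueness of mu and phi} of Theorem \ref{thm:correspondece between POVM and cp maps}.
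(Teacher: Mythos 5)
Your argument is correct and is essentially the paper's own proof: the paper also deduces the statement from Corollary \ref{krien-milman theorem for regular POVM} via the correspondence $\mu\mapsto\phi_\mu$ (Theorem \ref{thm:correspondece between POVM and cp maps}, Theorem \ref{presevance of C*-extreme point in the correspondece of POVM and cp maps}) and the homeomorphism of bounded-weak topologies recorded in the preceding proposition. You have merely spelled out the bookkeeping (surjectivity of the correspondence, preservation of $\cst$-convex hulls) that the paper leaves implicit.
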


\section{Conclusion}

Our original interest was to study $\cst$-convexity and $\cst$-extreme points in the setting of unital completely positive maps on unital commutative $\cst$-algebras. For this purpose, we have taken recourse in the well-known correspondence between such  maps and POVMs on compact spaces. While doing so, we thought it could be of  independent interest to study  $\cst$-convexity in the setting of POVMs.  So we analyze  POVMs on general measurable spaces and get several  interesting and basic results. Naimark's dilation theorem plays a crucial role in our investigation of POVMs, just as Stinespring's dilation theorem does for completely positive maps. Below we highlight some of our main results.

The abstract characterizations of $\cst$-extreme POVMs in Theorem \ref{thm:Farenick and Zhou characterization of $C^*$-extreme points} and Corollary \ref{Zhou Characterization of $C^*$-extreme points} are the building blocks for all the forthcoming results. Our first major result is Theorem \ref{if mu(A) commutes with everything, then it is a projection}, which says that for a $\cst$-extreme POVM $\mu:\ox\to\bh$ and $E\in\ox$, if $\mu(E)$ commutes with $\mu(A)$ for all $A\subseteq E$, then $\mu(E)$ is a projection. The significance of this theorem should be clear from the  following consequences:
% This powerful theorem immediately gives some beautiful results:
\begin{itemize}
\item All $\cst$-extreme POVMs with commutative ranges are spectral (Theorem \ref{commutative C*-extreme points are PVM}).
\item All atomic $\cst$-extreme POVMs are spectral, and hence all $\cst$-extreme POVMs on countable spaces are spectral (Theorem \ref{atomic $C^*$-extreme points are PVM}).
\item If $\dim\h<\infty$, then  all $\cst$-extreme POVMs
% taking valued in the algebra $M_n$ of $n\times n $
are spectral (Theorem \ref{POVMs on finite dimensional spaces are spectral}).
\end{itemize}
We next study mutually disjoint POVMs and behaviour of $\cst$-convexity under their direct sums. Here we show   the following:
\begin{itemize}
    \item Any $\cst$-extreme POVM decomposes uniquely
into a direct sum of an atomic $\cst$-extreme POVM and a non-atomic $\cst$-extreme POVM such that they are mutually disjoint (Theorem \ref{thm:every povm decomposes as direct sum of atomic and non atomic povm}).
\end{itemize}
In essence, this implies that in order to get complete picture of $\cst$-extreme POVMs, it suffices to understand non-atomic $\cst$-extreme POVMs, given the fact that we have already characterized atomic $\cst$-extreme POVMs.
%We also study measure isomorphic POVMs in order to produce some interesting examples of POVMs on metric spaces (see Example \ref{existence of a non-homomorphic C^*-extreme points on an uncountable metric space}).

Our next main result is a version of Krein-Milman theorem for the $\cst$-convexity of POVMs on topological spaces. We define an appropriate topology on the $\cst$-convex space $\px$ of normalized POVMs, and prove in Theorem \ref{Krein-Milman type theorem for POVM} that
\begin{itemize}
    \item $\px$ is closure of $\cst$-convex hull of the set of its $\cst$-extreme points.
\end{itemize}
Finally, we  apply our observations  about  POVMs on  compact Hausdorff spaces $X$ to the study of $\cst$-convexity of  the space $UCP_\h(C(X))$ of unital completely positive maps on the commutative  $\cst$-algebra $C(X)$. In particular, we have the following:
\begin{itemize}
\item If $X$ is countable (in particular, when $C(X)=\C^n$), then every $\cst$-extreme points of $UCP_\h(C(X))$ is a $*$-homomorphism (Theorem \ref{$C^*$-extreme UCP maps are homomorphisms}).
\item If $X$ is uncountable, then $UCP_\h(C(X))$ contains a $\cst$-extreme point which is not a $*$-homomorphism (Theorem \ref{a non homomorphic C*-extreme point for separable C*-algebra}).
\item All $\cst$-extreme points in $UCP_\h(C(X))$ with commutative ranges are  $*$-homomorphisms (Theorem \ref{C-extreme UCP maps with commutative ranges are homomorphism}).
\item (A Krein-Milman type theorem) The space $UCP_\h(C(X))$ is closure in bounded-weak topology of $\cst $-convex hull of its $\cst$-extreme points (Theorem \ref{Krein-Milman theorem for UCP maps}).
\end{itemize}

%We have tried to identify  $C^*$-extreme points of  normalized POVMs on general measurable spaces. We see that such POVMs decompose uniquely into a direct sum of an atomic POVM and a non-atomic POVM. Those $C^*$-extreme points which are atomic are spectral measures. This in particular means that $C^*$-extreme points of POVMs on countable spaces are spectral measures. We studied measure isomorphisms of POVMs to conclude that, for many measurable spaces the analysis can be reduced to sub-$\sigma$-algebras of the Borel $\sigma $-algebra of the unit interval. Currently a complete description of non-atomic $C^*$-extreme points seems inaccessible.

We mention here in the passing that the study of  POVMs on compact Hausdorff
spaces as done  in Section \ref{application to completely positive maps}   extends easily to
POVMs on locally compact Hausdorff spaces. Indeed if $X$ is a locally compact
non-compact Hausdorff space, then the set of  contractive POVMs:
$$    \cpx=\{\mu:\ox\to\bh; \mu \text{ is  a POVM and }\mu(X)\leq
    I_\h\}$$
forms a $C^*$-convex set. Any $\mu $ here extends to a normalized
POVM $\tilde{\mu}$ on the Borel $\sigma $-algebra of the one point compactification
 $\tilde {X}= X\bigcup \{ \infty \}$, by taking $\tilde {\mu }({\infty })=
 1-\mu (X).$ This correspondence between contractive POVMs on $X$
 and normalized POVMs on $\tilde {X}$ is bijective and preserves basic
 properties such as $C^*$-convexity, regularity, atomicity etc. Hence results
 can be easily translated back from the compact case.

 We conclude with a question. Our hope is that getting an  answer to this question may
  shed more  light on the structure of $C^*$-extreme points
 of UCP maps on commutative $\cst$-algebras with non-metrizable spectrum.
 We have shown that any $\cst$-extreme point in $\p_\h(\N)$ is
spectral, where $\N$ is the set of natural numbers. It is also known
that any unital completely positive map on $l^\infty(=l^\infty(\N))$
corresponds to finitely additive positive operator valued measure on
$\N$, whereas (countably additive) POVMs correspond to the normal CP
maps on $l^\infty$ and hence all normal $\cst$-extreme points are
$*$-homomorphic. It is not clear as of now how $\cst$-extreme points
in the collection of all finitely additive POVMs behave. Approaching
another way, the spectrum of $l^{\infty}$ is of course the
Stone-\v{C}ech compactification of ${\mathbb N}$. Unfortunately this
space is not metrizable and our result on existence of a
non-homomorphic $C^*$-extreme point (Theorem \ref{a non homomorphic
C*-extreme point for separable C*-algebra}) is not applicable and so
we are left with the following question:

\begin{question}
Are $C^*$-extreme unital completely positive maps on the
$C^*$-algebra $l^{\infty }$ always $*$-homomorphisms?
\end{question}

\noindent \textbf{Acknowledgements:}
We sincerely thank the referee for several constructive suggestions which helped improve the paper.
The first author was supported
by the national post-doctoral fellowship (NPDF)  of SERB (India)
with reference number PDF/2017/002554 and the Indian Statistical
Institute. The second author thanks J C Bose Fellowship, SERB(India)
for financial support.


\begin{thebibliography}{00}


\bibitem{Arveson1} W.B. Arveson, Subalgebras of $C^*$-algebras, Acta Math. 123 (1969), 141-224.


\bibitem{Arveson2}W.B. Arveson, {\em  An Invitation to $C^*$-algebras,\/} Grad. Texts in Math, 39,
Springer-Verlag, New York, 1976.


\bibitem{Beukema} R. Beukema, {\em Positive Operator-valued Measures and Phase-space
Representations,\/} PhD Dissertation, Eindhoven: Technische
Universiteit Eindhoven, 2003.

\bibitem{Bhat Pati Sundar} R. Bhat, V. Pati and V.S. Sunder,
On some convex sets and their extreme points, Math. Ann. 296 (1993),
no. 4, 637-648.

\bibitem{Bogachev} V.I. Bogachev, {\em Measure Theory,} Vol 2, Springer-Verlag, Berlin, 2007.

\bibitem{Chiribella Ariano Schlingemann} G. Chiribella, G.M. D'Ariano and D. Schlingemann,
How continuous quantum measurements in finite dimension are
actually discrete, Phys. Rev. Lett. 98 (2007), no. 19, 190403, 4 pp.

\bibitem{Choi} M. D. Choi, Completely positive linear maps on complex matrices, Linear Algebra
Appl. 10 (1975), 285-290.

\bibitem{Conway} J.B. Conway, {\em A Course in Functional Analysis,\/}  2nd ed. Grad. Texts in Math, 96, Springer-Verlag, New York, 1990.


\bibitem{Davidson Kennedy} K. R. Davidson and M. Kennedy, Noncommutative choquet theory, arXiv:1905.08436.


\bibitem{Davies} E.B. Davies, {\em Quantum Theory of Open Systems,\/}  Academic Press, London, 1976.

\bibitem{Davies Lewis} E.B. Davies and J.T. Lewis,
An operational approach to quantum probability, Comm. Math. Phys. 17
(1970), 239-260.

\bibitem{Dorofeev Graaf} S.V. Dorofeev and J. de Graaf, Some maximality results for effect-valued
measures, Indag. Math. (N.S.) 8 (1997), no. 3, 349-369.


\bibitem{Douglas} R.G. Douglas, {\em  Banach Algebra Techniques in Operator Theory,\/}  Grad. Texts in Math, 179, Springer-Verlag, New York, 1998.


\bibitem{Effros Winkler} E.G. Effros and S. Winkler, Matrix convexity: operator analogues of the bipolar and Hahn-Banach
 theorems, J. Funct. Anal.
144 (1997), no. 1, 117-152.


\bibitem{Farenick Morenz}D.R. Farenick and P.B. Morenz, $C^*$-extreme points in the generalised state
 spaces of a $C^*$-algebra, Trans.
Amer. Math. Soc. 349 (1997), no. 5, 1725-1748.

\bibitem{Farenick Plosker Smith}D. Farenick, S. Plosker and J. Smith,
Classical and nonclassical randomness in quantum measurements, J.
Math. Phys. 52 (2011), no. 12, 122204, 26 pp.

\bibitem{Farenick Zhou} D.R. Farenick and H. Zhou, The structure of $C^*$-extreme points in spaces of completely
positive linear maps on $C^*$-algebras, Proc. Amer. Math. Soc. 126
 (1998), no. 5,  1467-1477.


\bibitem{Fujimoto} I. Fujimoto,  CP-duality for $C^*$- and $W^*$-algebras,
J. Operator Theory 30 (1993), no. 2, 201-215.

\bibitem{Gregg} M.C. Gregg, On $C^*$-extreme maps and $*$-homomorphisms of a commutative $C^*$-algebra,
Integral Equations Operator Theory 63 (2009), no. 3, 337-349.


\bibitem{Hadwin} D. W. Hadwin, Dilations and Hahn decompositions for linear maps,
Canadian J. Math. 33 (1981), no. 4, 826-839.

\bibitem{Han Larson Liu} D. Han, D.R. Larson, B. Liu and R. Liu, {\em
Operator-valued Measures, Dilations, and the Theory of Frames,\/}
Mem. Amer. Math. Soc. 229 (2014), no. 1075.

\bibitem{Heinosaari Pellonpaa} T. Heinosaari and J.-P. Pellonp\"{a}\"{a}, Extreme commutative quantum observables are sharp,
J. Phys. A 44 (2011), no. 31, 315303, 4 pp.

\bibitem{Holevo} A.S. Holevo, {\em Statistical Structure of Quantum Theory,\/}  Lecture Notes in Physics, Monographs, 67, Springer-Verlag, Berlin, 2001.


\bibitem{Johnson} R.A. Johnson, Atomic and nonatomic measures, Proc. Amer. Math. Soc. 25 (1970), 650-655.

\bibitem{Loebl Paulsen} R.I. Loebl and V.I. Paulsen, Some remarks on $\cst$-convexity,  Linear Algebra Appl.
 35 (1981), 63-78.

\bibitem{Magajna} B. Magajna, $\cst$-convex sets and completely positive maps, Integral Equations Operator Theory 85 (2016), no. 1, 37-62.


\bibitem{Mclaren Plosker Ram} D. McLaren, S. Plosker and C. Ramsey, On operator valued
measures, Houston J. Math. 46 (2020), no.1, 201-226.
% arXiv:1801.00331.

\bibitem{Neumark} M.A. Neumark, On a representation of additive operator set functions, C.R. (Doklady) Acad. Sci. URSS (N.S.) 41, (1943), 359-361.


\bibitem{KRP3} K. R. Parthasarathy, {\em  Probability Measures on Metric Spaces,\/}  Academic Press, New York, 1967.


\bibitem{KRP2} K. R. Parthasarathy, {\em An Introduction to Quantum
Stochastic Calculus,\/}  Monographs in Mathematics, 85,
Birkh\"{a}user Verlag, Basel, 1992.

\bibitem{KRP4} K. R. Parthasarathy, Extreme points of the convex set of stochastic maps on a $C^*$-algebra.
 Infin. Dimens. Anal. Quantum Probab. Relat. Top. 1 (1998), no. 4, 599-609.

\bibitem{KRP1} K.R. Parthasarathy,
Extremal decision rules in quantum hypothesis testing, Infin.
Dimens. Anal. Quantum Probab. Relat. Top. 2 (1999), no. 4, 557-568.


\bibitem{Paulsen} V. Paulsen, {\em Completely Bounded Maps and Operator Algebras,\/}  Cambridge Studies in Advanced Mathematics, 78,
 Cambridge University Press, Cambridge, 2003.


\bibitem{Rajdavi Rosenthal} H. Radjavi and P. Rosenthal, {\em Invariant Subspaces,\/}  Dover Publications, Mineola, NY, 2003.

\bibitem{Raginsky} M. Raginsky, Radon-Nikodym derivatives of quantum operations,
J. Math. Phys. 44 (2003), no. 11, 5003-5020.



\bibitem{Schroeck} F.E. Schroeck Jr., {\em Quantum Mechanics on Phase Space,\/} Kluwer Academic Publishers, 1996.


\bibitem{Stormer} E. St\o{}rmer, Positive linear maps of operator algebras, Acta Math. 110 (1963),
233-278.

\bibitem{Zhou} H. Zhou, {\em $C^*$-extreme Points in Spaces of Completely Positive Maps,\/} PhD thesis, University
of Regina, 1998.

\end{thebibliography}
\end{document}